\numberwithin{equation}{section}
\newcommand\numberthis{\refstepcounter{equation}\tag{\theequation}}
\newtheorem{theorem}{Theorem}[section]
\newtheorem{cor}[theorem]{Corollary}
\newtheorem{lem}[theorem]{Lemma}
\newtheorem{exa}[theorem]{Example}
\newtheorem{q}[theorem]{Question}
\newtheorem{prop}[theorem]{Proposition}
\newtheorem{rem}[theorem]{Remark}
\newtheorem{defi}[theorem]{Definition}
\newcommand{\di}{\,d}
\newcommand{\dau}{Daskalopoulos and Uhlenbeck }
\newcommand{\daun}{Daskalopoulos and Uhlenbeck}
\newcommand{\msl}{maximal stretch lamination }
\newcommand{\arccosh}{\textnormal{arccosh }}
\newcommand{\area}{\textnormal{Area}} 
\newcommand{\ehl}{$\epsilon$ homologically thick }
\newcommand{\bd}{\overline{D}}
\newcommand{\ws}{\widetilde{S}}
\newcommand{\br}{\overline{R}}
\newcommand{\bo}{\mathcal{O}}
\newcommand{\ds}{\displaystyle}
\newcommand{\entropy}{\textnormal{ent}}
\newcommand{\entphi}{\textnormal{ent}_{\phi}}
\newcommand{\fancyS}{\mathcal{S}}
\newcommand{\hy}{\mathbb{H}^3}
\newcommand{\inj}{\textnormal{inj}}
\newcommand{\ig}{I_{\gamma}}
\newcommand{\li}{L^{\infty}}
\newcommand{\lfourteen}{L14n21792 }
\newcommand{\flm}{Farre, Landesberg, and Minsky }
\newcommand{\flmn}{Farre, Landesberg, and Minsky}
\newcommand{\lip}{\textnormal{Lip}}
\newcommand{\mphi}{M_{\phi}}
\newcommand{\nz}{\mathbb{N}_0}
\newcommand{\wm}{\widetilde{M}}
\newcommand{\Q}{\mathbb{Q}}
\newcommand{\R}{\mathbb{R}}
\newcommand{\tn}{\textnormal}
\newcommand{\pc}{Poincar\'e }
\newcommand{\pig}{\Psi_{\ig}}
\newcommand{\zcover}{$\mathbb{Z}$-cover }
\newcommand{\skm}{s_{km}}
\newcommand{\T}{\mathbb{T}}
\newcommand{\da}{d_{\alpha}}
\newcommand{\vol}{\textnormal{Vol}}
\newcommand{\Z}{\mathbb{Z}}
\newcommand{\tfo}{{}_2F_1}
\newcommand{\PML}{$\mathbb{P}\mathcal{ML}$}
\newcommand{\stheta}{S_{\theta}}
\newcommand{\tj}{Thurston-J\o rgensen }
\newcommand{\diam}{\tn{diam}}
\title{Thurston norms, $L^2$-norms, geodesic laminations, and Lipschitz maps}
\author{Xiaolong Hans Han}
\date{}
\begin{document}
	\maketitle
	\begin{abstract}
		For closed hyperbolic $3$-manifolds $M$ with volume less than a constant $V$, we prove an inequality regarding the geometric $L^2$-norm and the topological Thurston norm, which is qualitatively sharp and verifies a conjecture of Brock and Dunfield in this case. Generically, we show that the $L^2$-norm is less than a constant $c(V)$ times the Thurston norm by showing that any least area closed surface is disjoint from the thin part. \par
		We then study the connection between the Thurston norm, best Lipschitz circle-valued maps, and maximal stretch laminations, building on the recent work of \daun \ and \flmn. We show that the distance between a level set and its translation is the reciprocal of the Lipschitz constant, bounded by the topological entropy of the pseudo-Anosov monodromy if $M$ fibers. For infinitely many examples constructed by Rudd, we show the entropy is bounded from below by one-third the length of the circumference.  
	\end{abstract}

\section{Introduction}
\indent	By Mostow's rigidity theorem, the geometric invariants of a closed hyperbolic $3$-manifold $M$ are determined by the topology of $M$. Effective geometrization attempts to clarify the quantitative relationship between the geometric and topological invariants.  There are two norms on the cohomology $H^1(M; \R)$, the topological Thurston norm and the geometric $L^2$-norm. For more background and history on the relationship between the two norms, we refer to \cite{hansHarmonicForms}. Brock and Dunfield \cite{bdNorms} prove the following inequality: 
	\begin{equation}\label{bdi}
		\ds \frac{\pi}{\sqrt{\vol(M)}} \| \cdot \|_{Th} \leq \| \cdot \|_{L^2} \leq \frac{10\pi}{\sqrt{\inj(M)}} \|\cdot \|_{Th} \text{ on } H^1(M;\R)
	\end{equation}
	where $\inj(M)$ is the injectivity radius of $M$.
 They construct a sequence of closed hyperbolic $3$-manifolds $M_j$ with uniformly bounded volume (from Dehn filling on a link complement), and prove in [\citenum{bdNorms}, Theorem 1.4] that
	\begin{equation}\label{eq conjecture}
			\|\cdot\|_{L^2} / \|\cdot\|_{Th} \sim	\bo\left(\sqrt{-\log\inj(M_j) }\right) \tn{ as } \inj(M_j) \rightarrow 0.
	\end{equation}
	They conjecture that this asymptotics reflects the leading order. In proving the global inequality 
	\begin{equation}\label{eq l2 less than root inj thurston}
		\|\cdot\|_{L^2} \leq \frac{10\pi}{\sqrt{\inj(M)}}\|\cdot\|_{Th} \tn{ on } H^1(M; \R),
	\end{equation}
 they first establish a local one of the \textit{same order} for harmonic $1$-forms
	\begin{equation}\label{eq l infinity bounded by 5 / square root inj times L2}
			\|\cdot\|_{\li} \leq \frac{5}{\sqrt{\inj(M)}}\|\cdot\|_{L^2},
	\end{equation}
	and then use tools from topology and geometry, such as the least area norm and the \pc duality, etc. By solving explicitly the Laplacian equation on a Margulis tube with twists in terms of hypergeometric functions with complex coefficients in \Cref{theorem harmonic expansion into hypergeometric}, we show in \Cref{local conjecture l infinity not bounded by sqrt inj l2} that the local version 
	\[\|\cdot\|_{\li} \leq c\sqrt{-\log \inj(M)}\|\cdot\|_{L^2}\] 
	of \eqref{eq conjecture} \textbf{does not hold} for harmonic $1$-forms on $M$, which provide potential counterexamples to their conjecture without the uniform volume assumption. Moreover, there exist Margulis tubes of arbitrarily large volume, so that [\citenum{bdNorms}, Theorem 1.4(b)] no longer holds\footnote{Specifically, the proof of (iii) on [\citenum{bdNorms}, p. 551] depends crucially on volumes of Margulis tube $V_n$ uniformly bounded above, since their $R_n$ using [\citenum{bdNorms}, (6.2)] depends on volume.}. Nevertheless, with several new tools, we establish their conjecture for manifolds with bounded volume. 		
	\begin{restatable}{theorem}{theoremMainConjectureBD}\label{theorem main conjecture BD}
		Let $V>0$. Then for all closed hyperbolic $3$-manifolds $M$ whose volume $\leq V$ and $\inj(M)\leq 58/10^6$, there exists a constant $c(V)$ such that 
		\begin{equation}\label{local conjecture}
			\ds \| \cdot \|_{L^2} \leq  \| \cdot \|_{Th} \left(2\sqrt{c^2(V)\pi^2 + 100\pi \log \left(\frac{V}{\pi\inj(M)}+1\right)} \right)	\tn{ on } H^1(M;\R). 
		\end{equation}
	\end{restatable}
\noindent	Note that by the \tj theory, there are only finitely hyperbolic manifolds with injectivity radius $\geq 58/10^6$ and volume $\leq V$. Such manifolds admit a uniform upper bound by \eqref{eq l2 less than root inj thurston}. Actually,  
	we show in \Cref{generic L2 thurston norm linear} that for a generic manifold $M$ of volume $\leq V$, the $L^2$-norm is bounded from above (and below) by some constant multiple of the Thurston norm, no matter how small the injectivity radius is: 
	$$  \| \cdot \|_{L^2} \leq  2c(V)\pi \| \cdot \|_{Th}   	\tn{ on } H^1(M;\R).$$
	One tool for establishing \Cref{theorem main conjecture BD} is a \textit{qualitatively sharp} geometric lower bound on the Thurston norm, whose proof is based on the interaction of short geodesics and minimal surfaces. 
		\begin{restatable}{lem}{thurstonNormLowerbound}\label{thurston norm lower bound by kappa / square root of length}
		If $M$ has $n$ geodesic $\gamma_i$ of length $\lambda_i\leq 0.01$, then the Thurston norm of $\alpha\in H^1(M;\R)$ satisfies
		\begin{equation}\label{eq lower bound on Thurston norm sum}
			\| \alpha \|_{Th} \geq \sum\limits_{i=1}^n \left|\int_{\gamma_i} \alpha\right| \left(\frac{0.107}{\sqrt{\lambda_i}}-1 \right). 
		\end{equation}
		In particular, if there are $m$ geodesics whose length $\lambda_i \leq 58/10^6$, then \begin{equation}\label{eq asymptotic lower bound on Thurston norm}
			\| \alpha \|_{Th} \geq\sum\limits_{i=1}^m \left|\int_{\gamma_i} \alpha\right| \frac{0.1}{\sqrt{\lambda_i}}.
		\end{equation}
	\end{restatable}
 	\Cref{thurston norm lower bound by kappa / square root of length} also simplifies the proof of [\citenum{bdNorms}, Theorem 1.4(b)] (see \Cref{application simplify Brock-Dunfield Theorem 1.4 (b)}) by considering the growth of the Thurston norm on some  cohomology classes of the Dehn fillings of $L14n21792$, which also shows that \Cref{thurston norm lower bound by kappa / square root of length} is qualitatively sharp. It admits a natural generalization to cusped manifolds (\Cref{thurstonNormLowerboundCusped}), based on the notion of cohomology slope, which computes the shortest loop on the boundary of a cusp neighborhood killed by a cohomology. 
 	
 	Another tool we develop concerns the boundary geometry of a Margulis tube and the volume upper bound $V$. In particular, using the idea of linear isoperimetric inequality and the foliation of a Margulis tube by flat tori, we bound the intrinsic diameter of the boundary of a Margulis tube in terms of $V$ in \Cref{uniform intrinsic injectivity radius and diameter}. We also show that a harmonic $1$-form on a tube admits an $L^2$-decomposition into topological terms and inessential terms in \Cref{harmonic 1-form decomposition in tube}, building on \eqref{harmonic expansion into hypergeometric}. Other tools developed include a series of results in \Cref{section preliminaries least area surfaces in Margulis tubes and the Thurston norm} on the interplay between minimal surfaces and harmonic $1$-forms on a tube. 
		
	In proving \Cref{theorem main conjecture BD}, we found that the Thurston norm is related to the Lipschitz geometry, which will have applications in bounding the fiber translation length of a pseudo-Anosov by its entropy (\Cref{separationBoundedAboveby entropy for a pseudo-Anosov}). To start with, the invariant $|\int_{\gamma}\alpha|/\sqrt{\ell(\gamma)}$, which appears in \eqref{eq lower bound on Thurston norm sum}, is a lower bound on the Thurston norm. If we modify the invariant 
	\[\frac{|\int_{\gamma}\alpha|}{\sqrt{\ell(\gamma)}} \hspace{0.15in}\tn{   into   } \hspace{0.15in} \frac{|\int_{\gamma}\alpha|}{ \ell(\gamma)}\eqqcolon K(\gamma),\]
	and take the supreme over the set $\fancyS$ of all free homotopy classes of simple closed curves in a closed hyperbolic $n$-manifold $M^n$ ($n\geq 2$), then
	\begin{equation}\label{K the geometric invariant sup intersection per length}
		K \coloneqq \sup\limits_{\gamma \in \fancyS}K(\gamma)
	\end{equation} 
	 coincides with the Lipschitz constant $\lip_{[\alpha]}$ of the cohomology class $[\alpha]$ considered by Daskalopoulos and Uhlenbeck  [\citenum{dubestLipLG}, Theorem 5.8]. For a map $f:M\rightarrow S^1$, we say $f\in \alpha$ if $\alpha=[f^*d\theta]$. It turns out $\lip_{[\alpha]}$ is equal to the smallest Lipschitz constant of circle-valued maps in the homotopy class $\{f\in\alpha\}$ and the norm $\|[\alpha]\|_{L^{\infty}}$ on the cohomology (see \Cref{def Lipschitz constant of cohomology} and the comments after that). Inspired by using \eqref{eq l infinity bounded by 5 / square root inj times L2} and least area surfaces to establish \eqref{eq l2 less than root inj thurston}, we prove 
	 \begin{restatable}{lem}{thurstonnormboundedbyLipschitzconstant}\label{thurston norm bounded by Lipschitz constant}
	 	Let $M$ be a closed hyperbolic $3$-manifold and $\alpha \in H^1(M; \Z)$. Then we have
	 	\begin{equation}\label{eq thurston norm bounded by Lipschitz constant}
	 		\|\alpha\|_{Th}< \frac{1}{\pi}\vol(M)\lip_{[\alpha]}. 
	 	\end{equation}
	 \end{restatable} 
\noindent	 Compared to \Cref{thurston norm lower bound by kappa / square root of length} which relates the Thurston norm to the length of closed geodesics, the best Lipschitz constant is realized at geodesic laminations, which are the same set of closed geodesics as in \Cref{thurston norm lower bound by kappa / square root of length} for infinitely many examples by [\citenum{rcStretchLamination}, Theorem 1.4]. This lemma is used to prove \Cref{inequality Lipschitz constant entropy} which shows that the Lipschitz constant (and sometimes the length of certain closed geodesics) of a fibered hyperbolic $3$-manifold is bounded by the entropy of its pseudo-Anosov monodromy. 

	 For $f \in\alpha$, we adapt the notion of a \textbf{tight} map  from \cite{flmMinimizingLaminationsRegularCover} whose Lipschitz constant $\lip_{f}$ is equal to $K$ in \eqref{K the geometric invariant sup intersection per length}, which necessarily minimizes the Lipschitz constant in its homotopy class $[f]$ (such maps are called best Lipschitz in \cite{dubestLipLG}). The maximal stretch locus of a general map is a subset of $M$ with maximal pointwise Lipschitz constant, and a \msl of a tight map is a subset of the maximal stretch locus which is a geodesic lamination (by [\citenum{flmMinimizingLaminationsRegularCover}, Proposition 3.1] the subset whose pullback to the frame bundle is invariant under the geodesic flow). Both notions are defined in \Cref{maximal stretch locus and lamination}. \par 
	 	Thurston \cite{thurstonNormHomology3manifold} uses the level sets of circle-valued maps to show an integral cohomology $\alpha \in H^1(M^3;\Z)$ is dual to embedded surfaces. Here, we establish some properties of the level sets of a tight circle-valued map. The \zcover $\widetilde{M}$ of $M$ associated with $\alpha$ and $f$ is an important tool for studying a tight map and its maximal stretch laminations. This is used in \cite{flmMinimizingLaminationsRegularCover, flmClassificationHorocycle} for classifying the horocycle orbit closures. We combine these two points of view and prove several results. The first one is \Cref{level set of tight cut lamination equally} which shows that the \msl $\Lambda(f)$ of a tight map $f$ cut along a level set $f^{-1}(\theta)$ results in geodesic segments of \textit{equal} length $1/K$. Furthermore, this number motivates the definition of the fiber translation length associated with $\alpha$. Suppose the deck transformation on $\widetilde{M}$ is $\tau$ and we have the following commutative diagram. 
	 \[
	 \begin{tikzcd}
	 	\wm \arrow{r}{\widetilde{f}} \arrow{d}{\pi}  & \R \arrow{d}{\pi} \\
	 	M \arrow{r}{f} & S^1.
	 \end{tikzcd}
	 \]
	 Let $S=f^{-1} (\theta)\subset M$ be a level set (which is possibly disconnected) and $\pi^{-1}(S)= \bigcup\limits_{i=-\infty}^{\infty} \tau^i\ws $ where $\ws$ lies on a fundamental domain for $(\wm, \langle\tau\rangle)$. In \Cref{fiber translation length of monodromy phi}, we define the fiber translation length $d_{\alpha}$, which is equal to
	 \[	  \sup_{\substack{S\subset M \tn{ isotopic} \\ \tn{to a fiber}}}	d_{\wm}(\widetilde{S}, \tau \cdot \widetilde{S}).\]  
	 Since the Lipschitz constant 
	 $$  \lip_{[f]}= \min\limits_{h\in [f]} \max\limits_{x\in M} \lip_h(x)$$ 
	 of a homotopy class is defined via min-max and is equal to $K$, we have
	 \begin{restatable}{theorem}{fibertranslationLengthequalReciprocalK}\label{fiber translation Length equal Reciprocal K}	 	
	 		The fiber translation length of $\alpha \in H^1(M^n;\Z)$ is equal to $\frac{1}{K}$
	 		$$
	 			\da=\frac{1}{K}. 
	 		$$
	 	 \end{restatable}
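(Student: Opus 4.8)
\emph{Proof plan.} The plan is to prove the two inequalities $d_\alpha\le 1/K$ and $d_\alpha\ge 1/K$, after first reformulating the upstairs distance $d_{\wm}(\widetilde S,\tau\cdot\widetilde S)$ as an infimum of lengths of arcs in $M$. First I would fix, for the surface $S$ in question, a smooth map $f\colon M\to S^1$ with $[f^*d\theta]=\alpha$ such that $S=f^{-1}(\theta_0)$ is a regular (hence connected, since $\alpha|_S=0$) level set; every surface isotopic to a fiber arises this way after composing a defining map with the ambient isotopy. Writing $\omega=f^*d\theta$ so that $d\widetilde f=\pi^*\omega$ and $\widetilde f\circ\tau=\widetilde f+1$, and noting $\widetilde f$ is constant on $\widetilde S$, one checks that a path in $\wm$ from $\widetilde S$ to $\tau\cdot\widetilde S$ is precisely the lift of an arc $\eta\subset M$ with $\partial\eta\subset S$ and $\int_\eta\omega=1$, so that
\[
d_{\wm}(\widetilde S,\tau\cdot\widetilde S)\;=\;\inf\Bigl\{\ell(\eta)\;:\;\partial\eta\subset S,\ \textstyle\int_\eta\omega=1\Bigr\}.
\]

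For the upper bound I would take an arbitrary loop $\gamma\subset M$ with $m\coloneqq\langle\alpha,[\gamma]\rangle\ge 1$, perturb it to be transverse to $S$, and use that $[\gamma]\cdot[S]=m\ne0$ forces $\gamma$ to meet $S$, cutting $\gamma$ into subarcs $\eta_1,\dots,\eta_k$ between consecutive intersection points. The crucial observation is that, because $S$ is the \emph{full, connected} level set $f^{-1}(\theta_0)$, each $f\circ\eta_l$ is a loop based at $\theta_0$ whose interior avoids $\theta_0$, so $\int_{\eta_l}\omega\in\{-1,0,1\}$; since these periods sum to $m$, at least $m$ of the $\eta_l$ have period exactly $1$, and the shortest such subarc $\eta^*$ has $\ell(\eta^*)\le\ell(\gamma)/m$. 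By the reformulation, $d_{\wm}(\widetilde S,\tau\cdot\widetilde S)\le\ell(\eta^*)\le\ell(\gamma)/\langle\alpha,[\gamma]\rangle$; taking the infimum over all such $\gamma$ and using that $\sup_\gamma\langle\alpha,[\gamma]\rangle/\ell(\gamma)$ over all loops equals $\lip_{[\alpha]}=K$ (it dominates the supremum over simple closed curves, which is $K$, and is dominated by $\lip_h$ for every $h$, hence by $\lip_{[\alpha]}$) gives $d_{\wm}(\widetilde S,\tau\cdot\widetilde S)\le 1/K$ for every admissible $S$, so $d_\alpha\le 1/K$.

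For the lower bound I would pass to a best Lipschitz (tight) representative $u$ of $[\alpha]$, so $\lip_u=K$, with lift $\widetilde u$ satisfying $\widetilde u\circ\tau=\widetilde u+1$, and take $S_0=u^{-1}(\theta_0)$ for a regular value. Then for $p\in\widetilde{S_0}$ and $q\in\tau\cdot\widetilde{S_0}$ the $K$-Lipschitz bound gives $1=|\widetilde u(p)-\widetilde u(q)|\le K\,d_{\wm}(p,q)$, so $d_{\wm}(\widetilde{S_0},\tau\cdot\widetilde{S_0})\ge 1/K$ and hence $d_\alpha\ge 1/K$. Together with the upper bound this shows the supremum defining $d_\alpha$ is attained on a level set of a tight map — consistently with \Cref{level set of tight cut lamination equally}, where a leaf of the maximal stretch lamination $\Lambda(u)$ cut along $S_0$ is itself a geodesic segment of length exactly $1/K$ realizing $d_{\wm}(\widetilde{S_0},\tau\cdot\widetilde{S_0})$.

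I expect the upper bound to be the main obstacle, and within it the one genuinely geometric input is the period dichotomy: a subarc of $\gamma$ that stays off the full level set $S$ has $\omega$-period in $\{-1,0,1\}$, which is what upgrades a loop of large intersection number into a short transverse arc of intersection number one. The remaining care lies in making the reformulation precise (the transversality perturbation, the bookkeeping of signed intersection numbers, and the identification of lifts of $S$ with the surfaces $\tau^j\cdot\widetilde S$), and in confirming that regular level sets of tight maps are admissible in the definition of $d_\alpha$, so that the lower-bound construction is legitimate.
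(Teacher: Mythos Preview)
Your proof is correct and follows the same two-inequality strategy as the paper: the lower bound via the Lipschitz inequality for a tight representative, and the upper bound by feeding an approximating loop $\gamma$ with $K(\gamma)$ close to $K$ through the level set $S$. The only difference is cosmetic --- you cut $\gamma$ along $S$ downstairs in $M$ and invoke the period dichotomy $\int_{\eta_l}\omega\in\{-1,0,1\}$ to extract a short subarc, whereas the paper lifts $\gamma_n$ to $\wm$ and telescopes $n\cdot d(\widetilde S,\tau\widetilde S)=\sum d(\tau^i\widetilde S,\tau^{i+1}\widetilde S)\le\sum d(p_i,p_{i+1})=\ell(\gamma_n)$ using the intersection points $p_i\in\widetilde{\gamma_n}\cap\tau^i\widetilde S$; these are the same pigeonhole viewed from $M$ versus $\wm$.
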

	 \begin{rem}
	 	 Since the \msl $\Lambda(f)$ maximizes the pointwise Lipschitz constant, in the \zcover $\wm$, $\widetilde{\Lambda(f)}$ goes through level sets from one end to the other as fast as possible\footnote{This principle is mentioned in \cite{flmMinimizingLaminationsRegularCover}.}. \Cref{fiber translation Length equal Reciprocal K} suggests that a tight map $f$ is not only tight in the gradient direction, but also has ``tight" level sets: non tight maps have level sets which are closer.
	 \end{rem}
\noindent	  With the fastest speed interpretation on $\widetilde{\Lambda(f)}$, \Cref{algebraic intersection=geometric intersection} shows that there exist natural orientations such that every algebraic intersection between $\Lambda(f)$ and some submanifold $\beta$ dual to $\alpha$ is positive. Moreover in $2$-dimension there exists a sublamination $\Lambda_m (\alpha) \subset \Lambda(f)$ which carries a transverse measure. We show that the ``algebraic intersection number" $\int_{\Lambda_m(f)} \alpha$ is equal to the geometric intersection number and the transverse measure. With a suitable normalization $\ell(\Lambda_m (\alpha))=1$, it suggests that  in a hyperbolic $n$-manifold we may define the geometric intersection of $\Lambda_m (\alpha)$ and $\beta$ as $K$, which generalizes the intersection form for a geodesic lamination and multi-curve on a hyperbolic surface and also the ordinary geometric intersection number if $\Lambda_m(\alpha)$ consists of closed geodesics. This is \Cref{defi geometric=K times length}. 
	 
 \begin{rem}
If the maximal stretch lamination is a union of closed geodesics $\gamma$ which intersects the genus-$g$ fiber $\kappa$ times, then \eqref{eq thurston norm bounded by Lipschitz constant} gives a quick lower bound $\frac{2\pi(g-1)|\kappa|}{\ell(\gamma)}$ for the volume of $M$. Thurston in [\citenum{wtMinimalStretchMaps}, Theorem 10.7] has shown the \msl for a generic surface homeomorphism is a simple closed geodesic. 
 \end{rem}
 \Cref{thurston norm bounded by Lipschitz constant} that relates the Thurston norm to the Lipschitz constant also admits an application in comparing the volume of a hyperbolic mapping torus with a Seifert-fibered mapping torus. 
\begin{restatable}{cor}{volTrivialMappingtorusLessthanHyperbolic}\label{vol Trivial Mapping torus Less than Hyperbolic}
 Let $\phi:S_g \rightarrow S_g$ be a pseudo-Anosov, $M_{\phi}$ the hyperbolic mapping torus, and $f: M_{\phi} \rightarrow S^1$ a fibration with fiber $S_g$. Equip $S_g$ with a hyperbolic metric and let $M_{Id}=S_g\times S^1$ be a product where $S^1=\R/\lip_{[f]}\Z$. Then we have 
$$
	\frac{1}{2}\vol(M_{Id}) < \vol(M_{\phi}).
$$
\end{restatable}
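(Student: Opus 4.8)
The plan is to apply \Cref{thurston norm bounded by Lipschitz constant} to the hyperbolic manifold $M_{\phi}$ with the fibered class $\alpha := [f^{*}d\theta]\in H^1(M_{\phi};\Z)$ and then to evaluate each term explicitly in terms of $g$ and of $\lip_{[f]}$. Since $\alpha$ is a fibered class with connected fiber the hyperbolic surface $S_g$ (so $g\ge 2$), Thurston's computation of the norm of a fibered class \cite{thurstonNormHomology3manifold} gives $\|\alpha\|_{Th}=\chi_-(S_g)=2g-2$. Thus \eqref{eq thurston norm bounded by Lipschitz constant} becomes
\[
  \pi(2g-2) < \vol(M_{\phi})\,\lip_{[f]}.
\]

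Next I would pin down the two invariants of $M_{Id}$. Write $M_{Id}=(S_g,\rho)\times(\R/L\Z)$ with $\rho$ a hyperbolic metric and the circle factor of circumference $L$. By Gauss--Bonnet, $\area(S_g,\rho)=-2\pi\chi(S_g)=2\pi(2g-2)$ independently of $\rho$, so $\vol(M_{Id})=2\pi(2g-2)L$. For the Lipschitz constant, the fibration $f_0$ itself, viewed as the projection onto the length-$L$ circle factor rescaled to have image of length $1$, has pointwise Lipschitz constant $1/L$, so $\lip_{[f_0]}\le 1/L$; in the other direction, using $\lip_{[f_0]}=K=\sup_{\gamma\in\fancyS}|\!\int_{\gamma} f_0^{*}d\theta|/\ell(\gamma)$ from \eqref{K the geometric invariant sup intersection per length}, the vertical circle $\{x\}\times(\R/L\Z)$ has length $L$ and algebraic intersection $\pm 1$ with the fiber, whence $K\ge 1/L$, and no loop with winding number $n$ around the circle factor beats this since its length is at least $|n|L$. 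Hence $\lip_{[f_0]}=1/L$, and the hypothesis $\lip_{[f_0]}=\lip_{[f]}$ forces $L=1/\lip_{[f]}$.

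Combining the three displays,
\[
  \tfrac12\vol(M_{Id}) = \pi(2g-2)L = \frac{\pi(2g-2)}{\lip_{[f]}} < \vol(M_{\phi}),
\]
as desired. The only point that is not pure bookkeeping is the identity $\lip_{[f_0]}=1/L$ for the product: the upper bound comes from the explicit $1/L$-Lipschitz representative $f_0$, and the lower bound from recognizing the vertical circle as the maximal-ratio curve. I do not expect a serious obstacle here, so the corollary is essentially a substitution of Thurston's norm formula and the product computation into \Cref{thurston norm bounded by Lipschitz constant}.
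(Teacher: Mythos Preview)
Your proof is correct and follows essentially the same route as the paper: compute $\vol(M_{Id})=2\pi(2g-2)L$ by Gauss--Bonnet, identify $\lip_{[f_0]}=1/L$, and then feed the fibered Thurston norm $\|\alpha\|_{Th}=2g-2$ into \Cref{thurston norm bounded by Lipschitz constant}. The paper's argument is the same, only phrased more tersely (it exhibits the best Lipschitz map $f_1:(x,t)\mapsto \lip_{[f]}\cdot t$ and reads off $\ell(\gamma)=1/\lip_{[f_0]}$ for the vertical circle).

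One small remark: for the lower bound $\lip_{[f_0]}\ge 1/L$ you invoke the identity $\lip_{[f_0]}=\sup_\gamma K(\gamma)$ from \eqref{K the geometric invariant sup intersection per length}, which in the paper is formulated for hyperbolic manifolds via \cite{dubestLipLG}, whereas $M_{Id}$ is a Seifert product. This is harmless because you only use the trivial direction $\lip_h\ge |\!\int_\gamma dh|/\ell(\gamma)$, valid on any Riemannian manifold; you could drop the citation and state that inequality directly. The additional sentence about loops of winding number $n$ is unnecessary once the upper bound $\lip_{[f_0]}\le 1/L$ is in hand.
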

\noindent The $\frac{1}{2}$ coefficient boils down to an observation of Uhlenbeck, which restricts the area of a stable minimal surface $S_g$ in a hyperbolic $3$-manifold: $\frac{1}{2}\leq \area(S_g)/2\pi(g-1) \leq 1$. \par
\Cref{K the geometric invariant sup intersection per length} $\lip_{[\alpha]} = \sup_{\fancyS}\frac{|\int_{\gamma}\alpha| }{\ell(\gamma)}$ reveals one geometric aspect of the Lipschitz constant $\lip_{[\alpha]}$. We find out that it is also closely related to dynamical invariants such as the entropy of pseudo-Anosov monodromy when $\alpha$ is a fibered class\footnote{\dau in [\citenum{dubestLipLG}, Problem 9.10, 9.11] point out a potential connection between fibered hyperbolic $3$-manifolds $M$ and the infinity harmonic maps which are best Lipschitz.}. Suppose $f: M\rightarrow S^1$ is a fibration with pseudo-Anosov monodromy $\phi$. Combining  \Cref{thurston norm bounded by Lipschitz constant} and Kojima-McShane \cite{kmNormalizedEntropyVolumePA}, we establish an inequality relating the geometric Lipschitz constant $K$ to the topological entropy of $\phi$. 
\begin{restatable}{theorem}{inequalityLipschitzConstantEntropy}\label{inequality Lipschitz constant entropy}
	Let $\mphi$ be the mapping torus associated to $(S, \phi)$ with the induced fibration $f: M\rightarrow S^1$. Then the Lipschitz constant $K$ of $[f]$ and the entropy of $\phi$ satisfy 
$$
	K \cdot \tn{ent}_{\phi}>\frac{1}{3}.
$$
\end{restatable}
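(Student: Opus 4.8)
The plan is to chain together three inequalities — one relating the Lipschitz constant $K$ to the Thurston norm, one relating the Thurston norm to the volume, and one relating the volume to the entropy of $\phi$ — so that the volume terms cancel and leave a pure inequality between $K$ and $\entphi$. For the first link I would use \Cref{thurston norm bounded by Lipschitz constant}: since $[f]$ is a fibered class, $\|[f]\|_{Th} = 2g-2 = -\chi(S)$, so that proposition reads $\pi(2g-2) < \vol(\mphi)\, K$, i.e. $\vol(\mphi) > \pi(2g-2)/K$. (I should be slightly careful here: the proposition is stated with $\lip_{[\alpha]}$, and I need to know $\lip_{[f]} = K$ for the fibered circle-valued map, which is exactly the identification $\lip_f = K$ from \eqref{K the geometric invariant sup intersection per length} discussed just before \Cref{thurston norm bounded by Lipschitz constant}.) For the third link I would invoke the Kojima--McShane inequality from \cite{kmNormalizedEntropyVolumePA}, which bounds the volume of a hyperbolic mapping torus above by a constant times $\entphi$ against the Thurston/Teichm\"uller data — in the cleanest form, $\vol(\mphi) \le 3\pi|\chi(S)|\,\entphi = 3\pi(2g-2)\,\entphi$ (the normalized-entropy statement $\vol(\mphi)/|\chi(S)| \le 3\pi\,\entphi$).

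Combining these two, $\pi(2g-2)/K < \vol(\mphi) \le 3\pi(2g-2)\,\entphi$, and dividing through by $\pi(2g-2) > 0$ gives $1/K < 3\,\entphi$, i.e. $1 < 3K\,\entphi$, which is exactly the claim. So the whole argument is: (i) compute $\|[f]\|_{Th} = 2g-2$ for a fibered class and feed it into \Cref{thurston norm bounded by Lipschitz constant} to get a lower bound on volume; (ii) feed the same Euler characteristic into the Kojima--McShane upper bound on volume; (iii) cancel.

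I should double check one boundary case: \Cref{thurston norm bounded by Lipschitz constant} gives a \emph{strict} inequality $\pi\|\alpha\|_{Th} < \vol(M)\lip_{[\alpha]}$, so the strictness in $1 < 3K\entphi$ comes for free from step (i); the Kojima--McShane bound may be non-strict but that does not matter. I should also make sure I am using the same normalization of entropy as \cite{kmNormalizedEntropyVolumePA} (natural logarithm of the dilatation) — the constant $3$ in the theorem is precisely the Kojima--McShane constant, so as long as both the cited volume bound and the statement use that convention, the numerology is consistent; if \cite{bdNorms} enters at all it is only implicitly, through the least-area/$L^1$ machinery already used to prove \Cref{thurston norm bounded by Lipschitz constant}, so I do not need to re-derive anything from it.

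The main obstacle, such as it is, is not analytic but bookkeeping: being sure that (a) the Lipschitz constant of the fibered map $f$ in the sense of \cite{dubestLipLG} agrees with $K$ as defined in \eqref{K the geometric invariant sup intersection per length} — this is the content of the tight-map discussion preceding \Cref{thurston norm bounded by Lipschitz constant} and of \Cref{fiber translation Length equal Reciprocal K} — and (b) that the version of the Kojima--McShane theorem I quote is in the normalized form with the constant $3$ and the same entropy convention, since an off-by-a-factor there would change the final constant. Once those two conventions are pinned down, the proof is a two-line cancellation.
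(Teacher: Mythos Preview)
Your proposal is correct and follows essentially the same route as the paper: combine $\pi\|df\|_{Th} < \vol(\mphi)\,K$ from \Cref{thurston norm bounded by Lipschitz constant} with the Kojima--McShane bound $\vol(\mphi)\le 3\pi|\chi(S)|\,\entphi$, using $\|df\|_{Th}=|\chi(S)|$ for a fibered class, and cancel. The paper's proof is exactly this two-line chain, with the strictness inherited from \Cref{thurston norm bounded by Lipschitz constant} as you noted.
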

\noindent 
This constant lower bound is qualitatively sharp in view of 
\begin{restatable}{lem}{productLipschitzconstantentropy}\label{lem product Lipschitz entropy}
	Let $M$ be a hyperbolic mapping torus. Then there exists a sequence of finite covers $M_n$ with $f_n: M_n \rightarrow S^1$, $K_n$, and $\tn{ent}_n\rightarrow \infty$ so that 
	$$ K_n \cdot\tn{ent}_n = \tn{constant}.$$
\end{restatable}

\noindent As a corollary of the \Cref{inequality Lipschitz constant entropy,fiber translation Length equal Reciprocal K}, we have 
\begin{restatable}{cor}{separationBoundedAbovebyEntropy}\label{separationBoundedAboveby entropy for a pseudo-Anosov}
The fiber translation length $d_{\alpha}$ is bounded from above by the topological entropy of $\phi$:
	\begin{equation}\label{distance bounded by 3entropy}
		d_{\alpha} < 3\entphi.
	\end{equation}
\end{restatable}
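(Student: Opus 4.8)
The plan is to derive this directly from the two immediately preceding results, with essentially no new work. Recall that here $\alpha = [f^*d\theta] \in H^1(M_\phi; \Z)$ is the fibered class carried by the fibration $f: M_\phi \to S^1$, so the Lipschitz constant $K$ appearing in \Cref{inequality Lipschitz constant entropy} is exactly $\lip_{[\alpha]} = \lip_{[f]}$ in the sense of \eqref{K the geometric invariant sup intersection per length}. This quantity is finite, since any smooth representative of $[f]$ is Lipschitz, and strictly positive, since $\alpha$ is a nonzero (indeed fibered) class, so $K(\gamma) > 0$ for any simple closed curve $\gamma$ with $\int_\gamma \alpha \neq 0$; hence $1/K$ makes sense.

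First I would apply \Cref{fiber translation Length equal Reciprocal K} to the class $\alpha$, giving $d_\alpha = 1/K$. Next, \Cref{inequality Lipschitz constant entropy} provides the strict inequality $1 < 3K\,\entphi$. Dividing through by $K > 0$ yields $1/K < 3\,\entphi$, and substituting $d_\alpha = 1/K$ gives $d_\alpha < 3\,\entphi$, which is precisely \eqref{distance bounded by 3entropy}.

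The only point requiring care — and the closest thing to an obstacle — is checking that the hypotheses of the two cited statements line up: \Cref{fiber translation Length equal Reciprocal K} is stated for a general integral class on a closed hyperbolic $n$-manifold, whereas \Cref{inequality Lipschitz constant entropy} is about the fibered class of a pseudo-Anosov mapping torus. One should confirm that in the fibered situation the level sets $f^{-1}(\theta)$ used to define $d_\alpha$ are the fibers themselves (immediate from the definition of a fibration), that the supremum over surfaces isotopic to a fiber entering the definition of $d_\alpha$ is the relevant one, and that $M_\phi$ being a closed hyperbolic $3$-manifold (so that $\phi$ is a closed-surface pseudo-Anosov, $S = S_g$) places us within the scope of the earlier theorem. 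Given the development in the preceding sections this verification is routine, and no further geometric estimate is needed.
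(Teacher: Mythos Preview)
Your proposal is correct and follows exactly the same approach as the paper: the paper's proof is the single sentence ``The proof follows from combining \Cref{fiber translation Length equal Reciprocal K} and \Cref{inequality Lipschitz constant entropy},'' which is precisely the substitution $d_\alpha = 1/K < 3\,\entphi$ that you carry out. Your additional remarks on positivity of $K$ and on matching the hypotheses are reasonable sanity checks but are not needed beyond what the paper already provides.
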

[\citenum{ctGroupInvariantPeanocurve}, Theorem 5.1] shows that given a pseudo-Anosov $\phi: S\rightarrow S$ with associated unstable lamination $(\lambda_1, dx)$ and stable lamination $(\lambda_2, dy)$ and stretch factor $k>1$, there exists a natural pseudometric 
$$ds^2=k^{2t}dx^2+k^{-2t}dy^2+(\log k)^2dt^2$$
so that $ds^2$ and the hyperbolic metric on the mapping torus $M_{\phi}$ are quasicomparable. For a pseudo-Anosov, $\log k=\entphi$. The metric $(M_{\phi}, ds^2)$ is called singular Sol.
\begin{figure}[H]
	\includegraphics[scale=0.5]{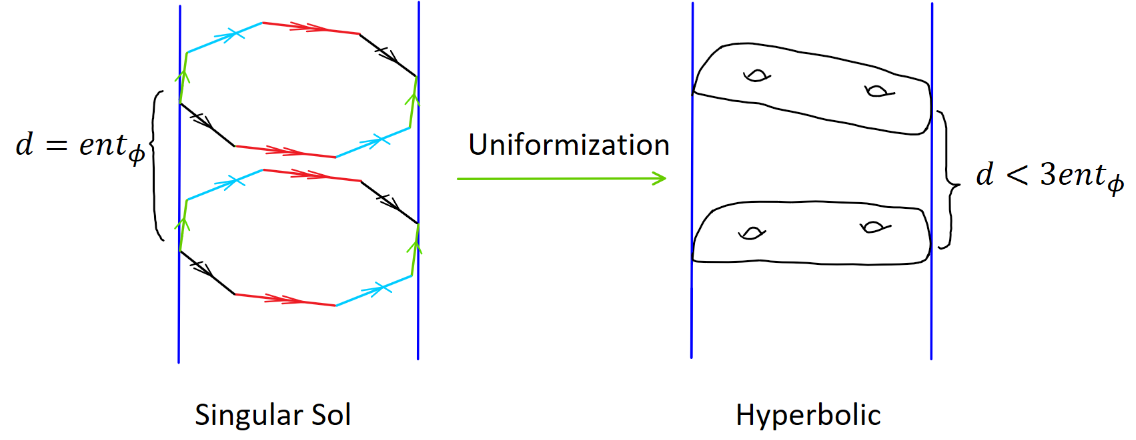}
\end{figure}
\noindent In their seminal works, Minsky \cite{myModelBoundsKleinianGroup}, and Brock, Canary, and Minsky \cite{bcmEndingLamination} establish a biLipschitz model and settle the ending lamination conjecture. See also \cite{bowditch2011ending} for other proofs of the conjecture. There are other translation lengths for pseudo-Anosov that are bounded by its entropy. 
Denote $\ell_{\mathscr{C}}(\phi)$ the asymptotic translation length of $\phi$ on the curve complex $\mathscr{C}$ of a genus-$g$ surface. Define $c_g$ as the optimal Lipschitz constant for the systole map $\text{sys}: \mathscr{T}(S) \rightarrow \mathscr{C}^{(1)}(S)$, defined by Masur and Minsky in \cite{mmGeometryComplexCurveHyperbolicity}. 
In [\citenum{ghklLipschitzconstantCurveComplex}, Lemma 3.2], Gadre, Hironaka, Kent, and Leininger prove
\[ \ell_{\mathscr{C}}(\phi) \leq c_g \entphi .\]
If we define the \textbf{circumference} $\gamma$ as the shortest loop that projects nontrivially to $\pi_1S^1$, then for $K(\gamma)=|\int_{\gamma}f^*d\theta |/ \ell(\gamma)$, a natural question is how $\ell(\gamma)$ compares with the entropy of $\phi$. By investigating the interaction between the thick-thin decomposition and the best lamination, Rudd in [\citenum{rcStretchLamination}, Theorem 1.4] constructs infinitely many fibered hyperbolic $3$-manifolds whose circumferences intersecting each fiber exactly once are the maximal stretch lamination for some fibered classes. As a corollary of \eqref{distance bounded by 3entropy} which bounds the entropy from below by the fiber translation length, we have
\begin{cor}\label{eq entropy larger than circumference}
	There exists a sequence of fibered hyperbolic $3$-manifolds $M_j$ which geometrically converge to the complement of the link \lfourteen such that the entropy $\entropy_{\phi_j}$ is bounded from below by the length of the circumference $\gamma_j$
	$$
		\entropy_{\phi_j} > \frac{1}{3}\ell(\gamma_j). 
	$$
\end{cor}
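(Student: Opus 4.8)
The plan is to run Rudd's construction through the chain of identities $\ell(\gamma)=1/K=\da$ and then invoke \eqref{distance bounded by 3entropy}. First I would apply [\citenum{rcStretchLamination}, Theorem 1.4]: it produces infinitely many fibered hyperbolic $3$-manifolds $\mphi$, each obtained by Dehn filling the complement of the link \lfourteen, together with a fibered class $\alpha\in H^1(\mphi;\Z)$ and an associated tight circle-valued map (a fibration) $f:\mphi\rightarrow S^1$ whose \msl $\Lambda(f)$ is the circumference $\gamma$ --- a single closed geodesic meeting each fiber, and hence each level set $f^{-1}(\theta)$, in exactly one point. In particular $|\int_\gamma f^*d\theta|=1$.

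Next I would identify $\ell(\gamma)$ with $1/K$. Since $\gamma=\Lambda(f)$ meets the level set $f^{-1}(\theta)$ in a single point, cutting $\Lambda(f)$ along $f^{-1}(\theta)$ yields exactly one geodesic segment, and that segment traverses all of $\gamma$; by \Cref{level set of tight cut lamination equally} it has length $1/K$, so $\ell(\gamma)=1/K$. (Equivalently, $\gamma$ lies in the maximal stretch locus, so $f$ has pointwise Lipschitz constant $K$ at every point of $\gamma$; thus $f|_\gamma$ is a constant-speed covering of $S^1$ of degree $\pm 1$, forcing $\ell(\gamma)\cdot K=1$.) By \Cref{fiber translation Length equal Reciprocal K} this common value is the fiber translation length, so $\da=1/K=\ell(\gamma)$.

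Finally, \Cref{separationBoundedAboveby entropy for a pseudo-Anosov} --- that is, \eqref{distance bounded by 3entropy} --- gives $\ell(\gamma)=\da<3\entphi$, which is the asserted inequality $\entphi>\tfrac13\ell(\gamma)$; since Rudd's family is infinite, the corollary follows. One could alternatively bypass \Cref{separationBoundedAboveby entropy for a pseudo-Anosov} and substitute $K=1/\ell(\gamma)$ directly into \Cref{inequality Lipschitz constant entropy}, obtaining $1<3K\entphi=3\entphi/\ell(\gamma)$, the same conclusion. The only step that is not a bare substitution is the identification in the second paragraph: one must check that the lamination Rudd calls the ``maximal stretch lamination of the fibered class'' is genuinely the maximal stretch locus of a \emph{tight} representative in the sense of \Cref{maximal stretch locus and lamination}, so that \Cref{level set of tight cut lamination equally} and \Cref{fiber translation Length equal Reciprocal K} apply with the global constant $K$ of \eqref{K the geometric invariant sup intersection per length} rather than merely a pointwise lower bound. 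Granting that compatibility, there is nothing further to do.
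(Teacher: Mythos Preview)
Your proposal is correct and follows precisely the route the paper intends: the paper presents this as an immediate corollary of Rudd's construction together with \eqref{distance bounded by 3entropy}, and you have simply spelled out the substitution $K=1/\ell(\gamma)$ (via \Cref{level set of tight cut lamination equally} or directly from $K(\gamma)=1/\ell(\gamma)$) and then applied \Cref{separationBoundedAboveby entropy for a pseudo-Anosov}. Your caveat about compatibility of Rudd's lamination with the tight-map framework is a fair point of hygiene, but it is not a divergence from the paper's argument.
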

\noindent A natural question is whether this inequality always holds, which essentially asks about the structure of \msl for fibered cohomology classes. A related question is
\begin{q}
Does there exist a \msl in a closed hyperbolic $3$-manifold which is not a union of closed geodesics? 
\end{q}
Biringer and Souto in \cite{bsRanksMappingTori} show that the $\epsilon$-electric circumference is $\approx$ the translation length of $\phi$. Recently, Italiano, Martelli, and Migliorini \cite{imm-hyp5ManifoldFiberoverthecirclet} made a breakthrough by discovering a $5$-dimensional hyperbolic manifold that fibers over the circle and solving conjectures in geometric group theory. Even though \cite{kmNormalizedEntropyVolumePA} builds upon many tools that rely on three-dimension crucially, the Lipschitz constant, fiber translation length, and topological entropy are independent of dimension. It is natural to wonder whether this generalizes to higher dimensional fibered hyperbolic manifolds. 

The lower bound of $\frac{\|\cdot \|_{L^2}}{\|\cdot \|_{Th}}$ (i.e., the left-hand side of \eqref{bdi}) is investigated in [\citenum{hansHarmonicForms}, Section 5]. In \Cref{section preliminaries least area surfaces in Margulis tubes and the Thurston norm}, we provide the preliminaries on the least area surfaces and the Thurston norm. One theme is the intricate interplay between harmonic $1$-forms and least area surfaces in a tube. 
	In \Cref{section the proof}, we prove \Cref{theorem main conjecture BD}. In \Cref{section generic growth}, we develop the notion of ``generic Dehn filling" and \ehl and show that for a generic manifold $M$ with $\vol(M)\leq V$,  $\|\cdot\|_{L^2}/\|\cdot\|_{Th}$ is bounded by a constant $c(V)$. 
	In \Cref{section lipschitz constant and entropy}, we prove some basic properties of the maximal stretch laminations and the level sets of a tight circle-valued map. We then study the Lipschitz constant and its connection with the Thurston norm and the entropy of pseudo-Anosov monodromy.
	\subsection*{Acknowledgments}
The author would like to express deep gratitude to his advisor, Nathan Dunfield, and his postdoc mentor, Yunhui Wu. The author thanks Yitwah Cheung, James Farre, Yi Huang, Chris Leininger and Cameron Gates Rudd for helpful discussions on laminations. He also thanks Yilin Gong, Zeno Huang, and Han Hong for helpful discussions on minimal surfaces and harmonic forms. This work is partially supported by NSF grant DMS-1811156 and DMS-1928930 while the author participated in a program hosted by the Mathematical Sciences Research Institute in Berkeley, California, during the Fall 2020 semester.
	\tableofcontents	

\section{Preliminaries}\label{section preliminaries least area surfaces in Margulis tubes and the Thurston norm}
	We assume throughout this section that $M$ is a closed hyperbolic $3$-manifold with volume $<V$ and $b_1(M)>0$. 
We start by defining cylindrical coordinates on a Margulis tube $\T$. Denote the length of its core geodesic $\gamma$ by $\lambda$ and its radius by $R$. Let $z \in [0, \lambda]$ be a unit-speed parameterization of the geodesic. Let $r \in [0, R]$ be the radial coordinate and $\theta$ be the angular coordinate. The coordinates come with identification $(r, \theta, \lambda)\sim (r, \theta +\theta_0, 0)$ where $\theta_0$ is the twist parameter determined by the core geodesic of $\T$. The metric on $\T$ is then given by 
	\begin{equation}\label{eq metric Tube}
			g=dr^2+\sinh^2{r} \di \theta^2 + \cosh^2{r} \di z^2.
	\end{equation}	
	The metric on a totally geodesic disk $D=\{z=c\}$ is $dr^2+\sinh^2{r} d\theta^2$. The boundary of the Margulis tube $\partial \T_R=\partial \T_R$ has metric $\sinh^2 R \di\theta^2 + \cosh^2{R} \di z^2$ which is homogeneous and thus must be intrinsically flat by the Gauss–Bonnet theorem. It is also convex with respect to inward-pointing normal vectors. The meridian of $\partial \T=\partial \T$ is an isotopy class of curves which is contractible in $\T$. 
	 If $\epsilon=0.29$ is a Margulis constant for $M$, and $\lambda$ is the real length of the core geodesic of a Margulis tube of $M$, then by [\citenum{fpsEffectiveDistanceNestedTubes}, Theorem 1.1] the tube radius $R$ satisfies
	\begin{equation}\label{tube radius lower bound}
		\cosh R \geq \frac{0.29}{\sqrt{7.256\lambda}}. 
	\end{equation}
	For $\epsilon>0$, we define the $\epsilon$-thick part of $M$ as 
	$$M_{\geq \epsilon}=\{x\in M: \inj(x) \geq \frac{1}{2}\epsilon\}.$$
	The $\epsilon$-thin part is $M_{< \epsilon} = M-M_{\geq \epsilon}$. \\
	\begin{lem}\label{uniform intrinsic injectivity radius and diameter}
		Let $V>0$. Then for all closed hyperbolic $3$-manifolds $M$ whose volume $\leq V$ and $b_1(M)>0$ and any Margulis tube $\T \subset M$, we have 
		\begin{enumerate}
			\item the intrinsic injectivity radius of $T$ is $\geq 0.145$, 
			\item there exists a constant $d(V)$ such that the intrinsic diameter $\diam (T)$ of $T$ is bounded from above by $d(V)$. 
		\end{enumerate}
	\end{lem}
	\begin{proof}
		By [\citenum{csMargulis_number_Haken_manifolds}], $0.29$ is a Margulis constant for hyperbolic $3$-manifolds with a positive betti number. The intrinsic injectivity radius of $T = \partial M_{\leq 0.29}$ is $\geq 0.29/2$ since $T$ is a submanifold. Denote $\lambda$ the length of the core geodesic of $\T$. 
		Using \eqref{eq metric Tube}, the area of $\partial \T$ and the volume of $\T$ are
		\begin{equation}\label{eq area of tube}
			\ds \area(\partial \T)=\int_{0}^{\lambda}\int_{0}^{2\pi} \sinh R \di \theta \wedge \cosh R\,dz =2\pi\lambda\sinh R \cosh R. 
		\end{equation} 
		and 
		\begin{equation}	 \label{eq volume of tube}
			\ds \vol(\T)=\int_{0}^{\lambda}\int_{0}^{R}\int_{0}^{2\pi}dr \wedge \sinh r \di \theta \wedge \cosh r\,dz =\pi\lambda\sinh^2R.
		\end{equation}
		On the flat torus $\partial \T$, since the product of intrinsic injectivity radius and the diameter is $\leq $ area, we have 
		$$0.145 \cdot \diam (T) \leq 2\pi\lambda\sinh R \cosh R \leq 2\vol(\T)+ 2\sqrt{\pi\lambda} \sqrt{\vol(\T)} \leq 2V+2\sqrt{0.29\pi} \sqrt{V}.$$ 
	\end{proof}
	\begin{lem}\label{R tube injectivity radius}
		If a Margulis tube $\T_{R+1}$ has radius $R+1$ and Margulis constant $0.29$, then the boundary $T_R=\partial \T_R$ of the $R$-subtube has injectivity radius at least $ 0.00227$. 	
	\end{lem}
	\begin{proof}
		Denote $\delta$ the Margulis constant of the $R$-subtube $\T_R$. By the left-hand side of [\citenum{fpsEffectiveDistanceNestedTubes}, Theorem 1.1], we have
		$$\arccosh \frac{0.29}{\sqrt{7.256\delta}} -0.0424\leq 1$$
		which implies that 
		$$\delta \geq \frac{1}{7.256}\left(\frac{0.29}{\cosh^2 1.0424}\right)^2 \geq 0.00455986.$$
		Thus the injectivity radius $\geq \delta/2 \geq 0.00227993$. 
	\end{proof}
	By \eqref{eq metric Tube}, an orthonormal basis of $1$-forms on $\T$ is 
	\begin{equation}\label{tubeONB1Forms}
		\{dr, \sinh{r} d\theta, \cosh{r}dz\}.
	\end{equation}
	For a $1$-form $\alpha$, the $L^2$-norm of $\alpha$ is $\|\alpha\|_{L^2}=\sqrt{\int_M \alpha \wedge *\alpha}$. For a cohomology class $[\alpha]$, the $L^2$-norm is $\|[\alpha]\|_{L^2}\coloneqq\inf\limits_{\beta \sim \alpha}\|\beta\|_{L^2}$, which by the Hodge theory is realized when $\alpha$ is harmonic. We assume $\alpha$ is harmonic in \Cref{section preliminaries least area surfaces in Margulis tubes and the Thurston norm,section the proof,section generic growth} unless specified otherwise. 
	\subsection{Least area surfaces in a Margulis tube}
Let $\T$ be a Margulis tube. For a least area surface $S$, the intersection $S\cap \T$ consists of least area subsurfaces $S'$ with boundary on $\partial \T$. Since $\T$ is convex with respect to the inward-pointing normal vectors, the convex hull property [\citenum{cmCourseMinSurface}, Proposition 1.9] of minimal surfaces forces the interior of $S'$ to be disjoint from $\partial \T$. Thus we have all components of the intersection of a least area surface $S$ and a Margulis tube $\T$ are properly embedded in $\T$. 

	Topologically, the intersection of $S$ and $\T$ consists of essential disks, boundary-parallel disks, and incompressible and compressible annuli, which we first define for reference.
	\begin{defi}
		A properly embedded surface $S' \subset \T$ with $\partial S' \subset \partial \T $ is called \textbf{boundary-parallel} if $S'$ is homotopic onto $\partial \T$ relative to its boundary. A surface $S'$ is \textbf{essential} if $[S', \partial S']\in H_2(\T, \partial \T; \Z)$ has a nonzero algebraic intersection with the core geodesic $\gamma$.
	\end{defi}
	
 The incompressibility of $S$ does not ensure the annuli components of $S\cap\T$ are incompressible. We first rule out a separating and compressible annulus from the intersection of a (not necessarily incompressible) minimal surface $S$ and the tube $\T$. Farre and Vargas Pallete prove that a minimal annulus in $\T$ that is a subset of an incompressible surface is incompressible in the last two paragraphs of their proof of [\citenum{fpMinimal-area-surfaces-fibered-3-manifolds}, Proposition 2.1]. 
	\begin{lem}
		Let $\T$ be a Margulis tube and $S$ a minimal surface. Then the intersection of $S$ and $\T$ cannot contain  a separating and compressible annulus component. In particular, an annulus component of the intersection of a minimal incompressible surface and the tube is incompressible and parallel to $\partial \T$.  
	\end{lem}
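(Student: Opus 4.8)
The plan is to argue by contradiction, combining the strict convexity of $\partial\T$ (already used above) with the topology of compressible annuli in a solid torus, in the spirit of the argument of Farre and Pallete in \cite{fpMinimal-area-surfaces-fibered-3-manifolds}. Suppose some component $A$ of $S\cap\T$ were a separating and compressible annulus. As an open subset of the minimal surface $S$, $A$ is itself minimal, and by the convex hull property its interior is disjoint from $\torus$.

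First I would record the topology of $A$. Compressibility of $A$ means its core circle $c$ bounds an embedded disk in $\T$, so $c$ --- hence each component of $\partial A$, being freely homotopic in $A$ to $c$ --- is null-homotopic in $\T\cong D^2\times S^1$. A simple closed curve on $\torus$ that is null-homotopic in $\T$ is either a meridian or bounds a disk in $\torus$; the latter is excluded by the usual innermost-disk argument (the curves of $S\cap\torus$ may be taken essential in $\torus$ for a least-area $S$), so both components of $\partial A$ are meridians, and the separating hypothesis forces them to be homologous to zero together in $\torus$, i.e.\ oppositely oriented. Consequently $A$, being compressible, is boundary-parallel in $\T$: it cobounds with an annulus $A_{\torus}\subset\torus$ a product region $P\cong A\times[0,1]$, with $A=A\times\{0\}$, $A_{\torus}=A\times\{1\}$, and $P$ is exactly the bounded component of $\T\setminus A$ (in particular $\gamma\cap\overline P=\varnothing$).

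Next I would normalize $A$ to be outermost among such annulus components, so that $S\cap P=A$: no other annulus component of $S\cap\T$ lies in $P$ by maximality, and no closed component of $S$ lies in $\operatorname{int}P$, since it would be a closed minimal surface trapped inside the mean-convex region $P$ (the mean curvature vector of $\partial P=A\cup A_{\torus}$ vanishes on the minimal piece $A$ and points strictly inward along $A_{\torus}\subset\torus$) --- this is excluded by sweeping $\T\setminus\gamma$ with the equidistant tori $\torus_{\rho}$ and applying the maximum principle at a first point of contact. Then the ambient isotopy of $M$ that pushes $A$ across $P$ and out of $\T$ through $\torus$ carries $S$ to a surface $S^{\dagger}$, isotopic to $S$, with $S^{\dagger}\cap\T=(S\cap\T)\setminus A$, hence meeting $\torus$ in fewer curves; using the strict mean-convexity of $\torus$ one arranges this move so that the area does not increase, which contradicts the way a least-area representative of the isotopy (resp.\ homology) class of $S$ meets a Margulis tube. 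Finally the ``in particular'' follows at once: for incompressible $S$ the annulus components of $S\cap\T$ are incompressible by \cite{fpMinimal-area-surfaces-fibered-3-manifolds}, and an incompressible properly embedded annulus in a solid torus is parallel to $\partial\T$.

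The step I expect to be the main obstacle is the area comparison in the third paragraph. A minimal annulus can genuinely dip into a solid torus --- catenoid-type minimal annuli spanning two coaxial ``meridian'' circles do exist --- so the barrier tori $\torus_{\rho}$ alone do not forbid the configuration, and the maximum principle applied on the $\gamma$-side is consistent with such a dip. Ruling it out requires exploiting the strict convexity of $\torus$ together with the precise hypothesis that $S$ is area-minimizing (not merely stationary), and verifying carefully that the push-across genuinely does not raise area; this is the delicate point where the geometry of the Margulis tube, rather than soft $3$-manifold topology, has to be invoked.
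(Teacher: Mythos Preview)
Your approach has a real gap precisely at the step you flag as the obstacle. The lemma assumes only that $S$ is \emph{minimal}, not least-area, so an isotopy that fails to increase area yields no contradiction. Even for a least-area $S$, pushing $A$ across $P$ replaces the minimal annulus $A$ by $A_{\torus}\subset\torus$, and mean-convexity of $\torus$ provides no inequality between $\area(A)$ and $\area(A_{\torus})$; the catenoid phenomenon you cite is exactly the obstruction, and there is no mechanism that makes the move area-nonincreasing. Your innermost-disk reduction (forcing the $\gamma_i$ to be meridians rather than inessential on $\torus$) likewise uses a least-area hypothesis you do not have.

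You have also misread the word ``separating'': it means that $A$ separates the surface $S$, not the solid torus $\T$. This is what the paper exploits. Writing $S=D_1\cup A\cup D_2$ with $D_i\cap A=\gamma_i$, one observes that $S_2\coloneqq A\cup D_2$ is a compact minimal surface whose only boundary component is $\gamma_1\subset\torus=\partial\T$. Since $\T$ is convex, the convex-hull property forbids the interior of $S_2$ from meeting $\torus$; but $D_2$ exits $\T$ at $\gamma_2$ (otherwise $A$ would not be a full component of $S\cap\T$), so $\gamma_2\subset\torus$ lies in the interior of $S_2$ --- a contradiction. No area comparison, product region, or least-area hypothesis enters. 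For the ``in particular'': when $S$ is incompressible, each $\gamma_i$ is null-homotopic in $M$ and hence bounds a disk $D\subset S$; taking $S_2\coloneqq A\cup D$ runs the same convex-hull contradiction and rules out compressible annuli, after which an incompressible properly embedded annulus in a solid torus is $\partial$-parallel.
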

	\begin{proof}
		 Suppose a minimal surface $S$ intersects $\T$ into a separating and compressible annulus $A$ with boundary curves $\gamma_1$ and $\gamma_2$. Then one of the three cases occur:
		 \begin{enumerate}
		 	\item $\gamma_1$ and $\gamma_2$ are parallel to the meridian of $\partial \T$,
		 	\item $\gamma_1$ and $\gamma_2$ both bound disks on $\partial \T$, or
		 	\item  $\gamma_1$ is parallel to the meridian and $\gamma_2$ bound a disk on $\partial \T$.
		 \end{enumerate}
		    In all three cases, the $\gamma_i$ are homotopically trivial in $\T$. Since $A$ is separating, the surface $S$ is decomposed into $ D_1 \cup A \cup D_2$, such that $D_i \cap A =\gamma_i$. The union of the subsurface $D_2$ and $A$ forms a minimal surface $S_2$ whose boundary is $\gamma_1$. This is a contradiction since the boundary $\partial \T$ of Margulis tube $\T$ is convex with respect to inward-pointing normal vectors. Thus the minimal surface $S_2$ whose boundary $\partial S_2 \subset \partial \T$ cannot intersect or touch $\partial \T$ by the convex hull property of minimal surfaces [\citenum{cmCourseMinSurface}, Proposition 1.9]. 
		 
		If $S$ is incompressible and $\gamma_i$ are homotopically trivial, either $\gamma_1$ or $\gamma_2$ must bound a disk $D \subset S$, say, $ D \cap A = \gamma_2$. Then $S_2 \coloneqq D \cup A$ is a minimal surface whose boundary is $\gamma_1$. By the same convexity argument, the intersection $S_2 \cap \partial \T$ consists of only  $\gamma_1$ and cannot contain $\gamma_2$. Thus the annulus $A=S \cap \T$ is incompressible and by [\citenum{mbIntroGeometricTopology}, Proposition 9.3.16] $\partial$-parallel. 
	\end{proof}
Since an incompressible annulus intersects every essential disk in $\T$, we have the following duality for embedded least area surfaces.
\begin{lem}\label{classification surfaces in tube}
	Topologically, two cases exist for the intersection of an embedded least area surface $S$ and a Margulis tube $\T$. It consists of either 
	\begin{enumerate}
		\item essential disks,  or
		\item incompressible annuli,
	\end{enumerate}
with probably some boundary-parallel disks. 
\end{lem}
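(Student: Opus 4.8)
The plan is to reduce the statement to a single disjointness claim about the components of $S\cap\T$ inside the solid torus $\T$, and then to settle that claim with a short intersection-number computation.

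First I would collect the inputs. Since $S$ is least area it is incompressible in $M$ (compressing it and smoothing would strictly reduce area), so the first of the two preceding lemmas applies and every component of $S\cap\T$ is properly embedded in $\T$; moreover, as already observed above and by the second of those lemmas, each such component is one of: an essential disk, a boundary-parallel disk, or an incompressible boundary-parallel annulus — in particular there is no compressible annulus component. Thus the lemma follows once I show that an essential disk component $D$ and an annulus component $A$ cannot occur at the same time.

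The heart of the argument is the observation that an incompressible properly embedded annulus $A\subset\T$ cannot be disjoint from any essential disk. Let $c$ be the core curve of $A$ and let $[\gamma]$ denote the class of the core geodesic of $\T$, a generator of $H_1(\T;\Z)\cong\Z$. Up to isotopy the only essential simple closed curve on $A$ is $c$, so incompressibility of $A$ means $c$ bounds no embedded disk in $\T$; hence, as $\pi_1(\T)\cong\Z$, we get $[c]=n[\gamma]$ with $n\neq 0$. By the definition of an essential disk, $[D,\partial D]\cdot[\gamma]\neq 0$ in $H_2(\T,\partial\T;\Z)\cong\Z$. By bilinearity of the intersection pairing, $[c]\cdot[D,\partial D]=n\,([\gamma]\cdot[D,\partial D])\neq 0$; since $c$ lies in the interior of $\T$ and misses $\partial D$, this algebraic count is realized by a transverse count of $c\cap D$, forcing $c\cap D\neq\emptyset$ and hence $A\cap D\supseteq c\cap D\neq\emptyset$.

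Finally I would conclude: because $S$ is embedded, distinct components of $S\cap\T$ are pairwise disjoint, so the previous paragraph rules out the simultaneous occurrence of an essential disk and an incompressible annulus. Hence either every non-boundary-parallel component of $S\cap\T$ is an essential disk, or every such component is an incompressible annulus, possibly together with some boundary-parallel disks. I expect the only real friction to be the bookkeeping in the second paragraph — making sure the two preceding lemmas together exclude \emph{every} compressible annulus component, including the non-separating ones (which one checks must have null-homologous core, hence are separating, or are eliminated by the same maximum-principle argument applied to the piece of $S$ they cut off) — so that the three-item case list is genuinely exhaustive; granting that, the rest is the elementary duality computation above and carries no further analytic content.
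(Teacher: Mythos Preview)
Your proposal is correct and follows essentially the same approach as the paper. The paper's proof is literally the single sentence preceding the lemma --- ``Since an incompressible annulus intersects every essential disk in $\T$, we have the following duality for embedded surfaces'' --- and you have simply supplied the intersection-pairing justification for that sentence; your concern about non-separating compressible annuli is already handled by the second clause of the preceding lemma, which asserts that for an incompressible $S$ every annulus component is incompressible.
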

\noindent Huang and Wang in [\citenum{hwCSC}, Section 3.2] show that a minimal incompressible annulus is lifted to a minimal hyperbolic helicoid in $\hy$. 

Now we show that regardless of the area of the surface $S \subset M$, a boundary-parallel disk component of $S \cap \T$ stays uniformly close to the boundary of $\T$. The essential condition is that the boundary of the Margulis tubes have uniformly bounded diameters. 
\begin{lem}\label{uniform-depth-boundary-parallel-disks}
	Let $\T$ be a Margulis tube with radius $R$. Suppose the intersection of a least area $S$ and $\T$ contains a component which is a boundary-parallel disk $B$. Then there exists a uniform constant $\br(V)$, called the \textbf{depth}, such that $B$ is contained in the $\br$-neighborhood of $\partial\T$: 
	\[ B \subset N_{\br}(\partial\T) .\] 
	In particular, there exists a uniform constant $\epsilon'$ such that at any point $x \in B'$, 
	\[ \inj_x \geq \epsilon'(V). \] 
\end{lem}
\begin{proof}
	A least area surface $S$ is stable, and by the classic result of Schoen \cite{srEstimatesStableMinimalSurface}, as a subset of hyperbolic manifolds it has a uniform curvature lower bound. It is a consequence of the Gauss equation that a minimal surface $S$ in a hyperbolic $3$-manifold has a uniform curvature upper bound. The boundary torus $T$ of the Margulis tubes $\T$ has uniform diameter upper bound $d(V)$. Thus we can apply [\citen{mrMinSurfaceShort}, Corollary 7] to conclude $B \cap N_{R-\br}(\gamma) = \emptyset$ for some constant $\br(V)$.  Since $T$ has injectivity radius $\epsilon/2$, and $\partial B_{\br}(T)$ is uniformly away from $T$, by [\citenum{fpsEffectiveDistanceNestedTubes}, Theorem 1.1] any point $x \in B_{\br}(T)$ has a uniform injectivity radius lower bound $\epsilon'= \frac{1}{2}\frac{\epsilon^2}{7.256\cosh^2 (\br+0.424)}$. 
\end{proof}
In [\citenum{fpMinimal-area-surfaces-fibered-3-manifolds}, Proposition 2.1], Farre and Vargas Pallete control the depth of a sequence of least area $S_j$ with uniform area upper bound via a lower bound on $\theta_0^2 / \lambda$, which is satisfied by a sequence of manifolds $M_j$ which geometrically converge ([\citenum{fpMinimal-area-surfaces-fibered-3-manifolds}, Theorem 2.5]). The $M_j$ necessarily has a uniform volume upper bound so that the above lemma applies. Their argument relies on concrete hyperbolic geometry instead. 

\begin{defi}
	An \textbf{invariant} annulus $A\subset \T$ is an annulus which is invariant under the $1$-parameter family $\Psi_{\ig}(t)$ of isometry generated by the loxodromic isometry $\ig$. 
\end{defi}
For example, the minimal annuli covered by hyperbolic helicoid in \cite{hwCSC} are invariant. Let $f$ be a harmonic function defined in $\T$ (whose harmonic expansion is given by \Cref{theorem harmonic expansion into hypergeometric}).
\begin{lem}\label{int-*df-0-invariant-annulus}
	Let $A\subset \T$ be an invariant annulus. Then 
	$$
		\ds \int_A *df =0. 
	$$
	In particular, for any $R>0$, 
	\[\ds \int_{\partial \T_R} *df =0.\]
\end{lem}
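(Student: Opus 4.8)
The key observation is that $f$ being harmonic is equivalent to $d(*df)=0$, so $\omega:=*df$ is a \emph{closed} $2$-form on the solid torus $\T$; the proof then combines this with the $\Psi_{\ig}$-invariance of $A$ and the topology of $\T$. I would first settle the ``in particular'' statement, which is the engine for the general case. The level torus $\torus_R$ is a closed surface bounding the sub-tube $\T_R$, on which $\omega$ is smooth (the core geodesic included), so Stokes gives $\int_{\torus_R}\omega=\int_{\T_R}d\omega=0$. Equivalently, and avoiding any discussion of regularity at the core, one averages $\Delta f=0$ over the flat tori $\torus_r$: the angular and longitudinal terms integrate to zero (using $2\pi$-periodicity in $\theta$ and, after also integrating in $\theta$, periodicity-up-to-twist in $z$), leaving $\partial_r\!\big(\sinh r\cosh r\,\partial_r\overline f(r)\big)=0$ for the $(0,0)$-mode $\overline f$; hence $\sinh r\cosh r\,\partial_r\overline f$ is constant and, being $0$ at $r=0$, vanishes identically. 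Reading off $\omega|_{\torus_R}$ from the orthonormal coframe \eqref{tubeONB1Forms} gives $\omega|_{\torus_R}=(\partial_r f)\sinh R\cosh R\,d\theta\wedge dz$, whose integral over $\torus_R$ is $\sinh R\cosh R\cdot 2\pi\lambda\cdot\partial_r\overline f(R)=0$.

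For a general invariant annulus $A$, I would reduce to the flow-average of $f$. Since $\Psi_{\ig}(t)$ is an orientation-preserving isometry fixing $A$ setwise, $\int_A\omega=\int_{\Psi_{\ig}(t)(A)}\omega=\int_A\Psi_{\ig}(t)^*\omega=\int_A *\,d\!\big(f\circ\Psi_{\ig}(t)\big)$ for every $t$ (the last equality because an isometry commutes with $*$), and since $\Psi_{\ig}(1)=\ig$ acts trivially on $\T$ the assignment $t\mapsto\Psi_{\ig}(t)$ descends to a circle action; averaging over one period yields $\int_A *df=\int_A *d\overline f$ with $\overline f$ harmonic (the Laplacian commutes with isometric pullback and with integration) and $\Psi_{\ig}$-invariant. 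Passing to coordinates $(r,u,v)$ on $\T$ adapted to the action --- the twist absorbed into $u=\theta-(\theta_0/\lambda)z$, so the orbits are the circles $\{r,u=\text{const}\}$ --- one gets $\overline f=\overline f(r,u)$ and $A=c\times S^1_v$ for an embedded arc $c$ in the cross-sectional disk $D$, and a direct computation identifies $\int_A *d\overline f$ with a positive multiple of $\int_c\eta$, where $\eta$ is a $1$-form on $D$ whose closedness $d\eta=0$ is exactly the equation ``$\overline f$ harmonic and $v$-independent''. As $D$ is simply connected, $\int_c\eta$ depends only on the endpoints of $c$: for $\torus_R$ the relevant ``arc'' is the whole circle $\partial D$, recovering $\int_{\partial D}\eta=\int_D d\eta=0$, and for a genuine invariant annulus one uses the rotational symmetry relating the two boundary circles of $A$ (for the hyperbolic helicoids of \cite{hwCSC} this is the $\pi$-rotation about the core axis), averaging $\overline f$ once more over this finite symmetry so that the primitive of $\eta$ agrees at the two endpoints of $c$.

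I expect the endpoint cancellation for the non-closed case to be the main obstacle: unlike the $\torus_R$ case it genuinely uses the geometry of $A$ (its symmetry, or that it arises as an incompressible minimal annulus) rather than topology alone, so the argument has to be pinned to the class of invariant annuli actually in play. The remaining technical point --- verifying that the non-diagonal metric in the $(r,u,v)$-coordinates still yields the equivalence $d\eta=0\iff\Delta\overline f=0$ after the substitution $u=\theta-(\theta_0/\lambda)z$ --- is a routine computation once the change of variables is set up.
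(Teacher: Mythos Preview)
Your Stokes argument for the torus case is clean and correct: $\torus_R=\partial\T_R$ and $d(*df)=0$, so $\int_{\torus_R}*df=0$ with no further work. The paper instead reaches this by summing the hypergeometric modes of $f$ term by term (taking angle--spread $\eta=2\pi$), which is considerably heavier.

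For the general invariant annulus, the gap you flagged --- endpoint cancellation for the arc $c$ --- is genuine, and in fact cannot be closed without an extra hypothesis: the statement as written is false. Take $k=1$, $m=0$ in the harmonic expansion, i.e.\ $f(r,\theta,z)=h_{10}(r)\cos\!\big(\theta+\tfrac{\theta_0}{\lambda}z\big)$. This is smooth on all of $\T$, harmonic, and already invariant under the flow $\Psi_{\ig}$, so your averaging step does nothing to it. On the invariant strip $A_R\subset\torus_R$ with angle--spread $\eta$ (exactly the paper's parametrisation), $*df|_{\torus_R}=f_r\sinh R\cosh R\,d\theta\wedge dz$ and a two--line computation gives
\[
\int_{A_R}*df=\lambda\,h_{10}'(R)\sinh R\cosh R\cdot\sin\eta,
\]
which is nonzero for generic $\eta$. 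Your reduction to $\int_c\eta$ for a closed $1$--form on the cross--sectional disk is correct; what fails is precisely that the two endpoints of $c$ --- the images of the two boundary circles of $A$ --- are unrelated by any symmetry forced by the hypothesis ``invariant under $\Psi_{\ig}$''. The $\pi$--rotation you invoke is an additional assumption (enjoyed by the helicoidal minimal annuli of \cite{hwCSC}, but not by an arbitrary flow--invariant strip), so appealing to it changes the statement being proved.

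The paper's own computation has the same blind spot: it checks the vanishing mode by mode, but the displayed integrals only treat $k\neq0$ with $m\neq0$ and $k=0$ with $m\neq0$; for $k\neq0$, $m=0$ the inner integral leaves $\frac{\lambda}{k}(1-\cos k\eta)$ and its sine companion, which do not vanish. So neither argument establishes the general claim, because the general claim does not hold; only the $\torus_R$ conclusion survives, and for that your Stokes proof is the right one.
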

\begin{proof}
	The annulus $A$ is homotopic to an annulus $A_R \subset \partial \T_R$, fixing the boundary $\partial A \subset \partial \T_R$. Since $\partial A$ and $\partial \T_R$ are invariant under $\Psi_{\ig}(t)$, $A_R$ is invariant as well. Thus there exists a parameter $\eta$ that measures the ``angle spread" of the annulus $A_R$ so that $A_R$ can be parameterized as 
	\[A_R=\{(R, \theta, z)| \theta \in [-\frac{z}{\lambda}\theta_0, \eta -\frac{z}{\lambda}\theta_0 ], z\in [0, \lambda]\}.\] 
	Recall from \eqref{skmThetaz} if $(k,m) \neq (0,0)$,
	\[\skm(\theta,z) =a_{km} \sin(k\theta+\frac{2\pi m}{\lambda}z+\frac{k\theta_0}{\lambda}z)+a_{km} '\cos(k\theta+\frac{2\pi m}{\lambda}z+\frac{k\theta_0}{\lambda}z)\] and $s_{00}=0$. 
	
Since an orthonormal basis of $1$-forms on $A_R$ is given by $\{\sinh R  \, d\theta, \cosh R \,dz\}$, we have
\begin{align*} 
	\int_{A_R} *df &= \int_{0}^{\lambda}\int_{-\frac{z}{\lambda}\theta_0}^{\eta -\frac{z}{\lambda}\theta_0}\sum_{k,m}^{\infty} f_R \sinh R  \, d\theta \wedge \cosh R \,dz\\
	&=  \sum_{k,m}^{\infty} h_{km}'(R) \sinh R \cosh R\int_{0}^{\lambda}\int_{-\frac{z}{\lambda}\theta_0}^{\eta -\frac{z}{\lambda}\theta_0} s_{km}(\theta,z) \, d\theta \wedge dz.
\end{align*}
For the integral in each summand with $k \neq 0$, 
\begin{align*}
	&\int_{0}^{\lambda}\int_{-\frac{z}{\lambda}\theta_0}^{\eta -\frac{z}{\lambda}\theta_0} \sin(k\theta+\frac{2\pi m}{\lambda}z+\frac{k\theta_0}{\lambda}z)d\theta \wedge dz\\ &= \frac{1}{k} \int_{0}^{\lambda} \cos(\frac{2\pi m}{\lambda}z)- \frac{1}{k} \int_{0}^{\lambda} \cos(k\eta+\frac{2\pi m}{\lambda}z) dz =0.
\end{align*}
If $k=0$, for $m\neq 0$,
\[\int_{0}^{\lambda} \sin\left(\frac{2\pi m}{\lambda}z \right)dz= \int_{0}^{\lambda} \cos\left(\frac{2\pi m}{\lambda}z \right)dz= 0.\]
which implies $	\int_{A_R} *df=0 $. Applying Stokes' theorem to the region bounded by $A$ and $A_R$, we obtain $\int_{A} *df=\int_{A_R} *df=0$. For the whole torus, take $\eta=2\pi$ and the conclusion follows as well. 
\end{proof}
 The Margulis tube $\T$ is foliated by totally geodesic disks $\bd \coloneqq \{z=c\}$. The non-transverse intersection of a least area disk $D$ and a single $\bd$ can be perturbed away by replacing $\bd$ with a nearby totally geodesic disk. 

\begin{lem}\label{D-transversely-intersect-geodesic-foliation-z=c}
	Suppose a totally geodesic disk $\bd=\{z=c\}$ has some non-transverse intersections with a least area disk $D$. Then there exists a $\delta_1$ such that $\bd_{\delta_1}=\{z=c + \delta_1\}$ only has transverse intersections with $D$. 
\end{lem}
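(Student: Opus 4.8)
The claim is a standard transversality/Sard-type statement: among the smooth family of totally geodesic disks $\bd_\delta=\{z=c+\delta\}$ foliating (a neighborhood of $\bd$ in) the Margulis tube, all but measure-zero many are transverse to the fixed least area disk $D$. The plan is to realize the failure of transversality as a critical-value phenomenon for the restriction of the coordinate function $z$ to $D$, and then invoke Sard's theorem. First I would note that $D$ is a smoothly embedded surface (being least area, it is smooth by regularity theory) and that $z|_D:D\to\R$ is a smooth function. A point $p\in D$ is a point of non-transverse intersection of $D$ with the level set $\{z=z(p)\}$ precisely when the tangent plane $T_pD$ coincides with $T_p\{z=z(p)\}=\ker dz_p$, i.e. when $p$ is a critical point of $z|_D$. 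Hence the set of levels $c+\delta$ for which $\bd_{c+\delta}$ meets $D$ non-transversely is contained in the set of critical values of $z|_D$.

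Next I would apply Sard's theorem to $z|_D:D\to\R$: the set of critical values has Lebesgue measure zero in $\R$. (If $D$ is noncompact one should be slightly careful — Sard's theorem still applies since $D$ is a second-countable smooth manifold — or one restricts attention to the compact piece of $D$ lying in a fixed compact subtube, which is all that is relevant near $\bd$.) Therefore the set of $\delta$ near $0$ for which $\bd_\delta$ fails to be transverse to $D$ has measure zero, so in particular its complement is dense, and one can pick $\delta$ arbitrarily small with $\bd_\delta=\{z=c+\delta\}$ transverse to $D$. One should also observe that $\bd_\delta$ is again a totally geodesic disk in $\T$ for every $\delta$ (the disks $\{z=\text{const}\}$ are all totally geodesic in the given coordinates, being isometric copies of the same hyperbolic disk), which is what makes the perturbed disk an admissible replacement.

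The only mild subtlety — and the step I would flag as requiring a sentence of care rather than a real obstacle — is the noncompactness of $D$ and the passage between "$D$ meets the single disk $\bd$ non-transversely" and "transversality of the whole family." One wants to know that moving $\bd$ to $\bd_\delta$ does not create new non-transverse intersections somewhere far out; but since we only need a single good $\delta$ (not a co-meager set of them simultaneously removing all bad behavior for all disks at once), the argument above suffices: for the fixed $D$, a single generic level $c+\delta$ works. Alternatively, and perhaps cleanest for the paper, one can phrase the whole thing as: parametrize the relevant disks by their $z$-coordinate and consider the evaluation/projection map; the bad set of parameters is the image of the critical set of $z|_D$, which is closed and measure-zero, hence has dense complement, giving the desired $\delta$. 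I expect no genuine obstacle here; the content is entirely Sard's theorem plus the observation that the foliating disks are all totally geodesic.
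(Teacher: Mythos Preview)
Your proposal is correct and gives a valid proof of the lemma, but it takes a different route from the paper. The paper invokes a result of Hass on minimal surfaces in foliated manifolds (\cite{hj-minimal-surfaces-foliated-manifolds}, Lemma 2.3), to the effect that a compact minimal surface has only finitely many tangencies with a minimal foliation; since both $D$ and each leaf $\{z=\text{const}\}$ are minimal, this yields that the set of bad $\delta$ is actually \emph{finite}, not merely measure-zero. Your Sard-theorem argument is more elementary and self-contained --- it needs nothing about minimal surfaces beyond the smoothness of $D$ --- and the weaker measure-zero conclusion is entirely sufficient for the lemma as stated and for the subsequent use in the paper. One small comment: you spend a paragraph worrying about noncompactness of $D$, but in this setting $D$ is a properly embedded least area disk in the Margulis tube with $\partial D\subset\partial\T$, hence compact; so Sard applies without any extra care, and that discussion can be dropped.
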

\begin{proof}
	By [\citenum{hj-minimal-surfaces-foliated-manifolds}, Lemma 2.3] the number of non-transverse intersections between $D$ and the minimal foliation by totally geodesic disks $\bd=\{z=constant\}$ is finite and isolated. Thus there exists a $\delta_1$ such that $\bd_{\delta_1}$ is void of non-transverse intersections with $\bd$. 
\end{proof}
We will henceforward assume that the least area disk $D \subset S$ only has transverse intersections with a fixed totally geodesic disk $\bd$.
\begin{lem}
The transverse intersection	between a least area disk $D$ and $\bd$ consists of $k$ embedded, mutually disjoint arcs $e_1, \cdots, e_k$, all of whose endpoints lie on $\partial D \subset \partial \T_R$.
\end{lem}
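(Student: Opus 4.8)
The plan is to analyze the components of the transverse intersection $D \cap \overline{D}$ as a $1$-manifold and rule out all configurations except properly embedded arcs with both endpoints on $\partial D \subset \torus_R$. First I would observe that, since both $D$ and $\overline{D}$ are embedded and the intersection is transverse, $D \cap \overline{D}$ is a properly embedded $1$-dimensional submanifold of $D$ (equivalently of $\overline{D}$); hence each component is either a circle in the interior or an arc with endpoints on $\partial D \cup \partial \overline{D}$. Because $D$ is a disk whose boundary $\partial D$ lies on $\torus_R$ while $\overline{D}=\{z=c\}$ is a totally geodesic disk with $\partial \overline{D}\subset \torus_R$ as well, and the two boundary curves $\partial D$ and $\partial \overline{D}$ are disjoint (they lie on $\torus_R$ and $\partial D$ is the boundary of the least area disk which we may take in general position with the foliation), every endpoint of an arc of $D\cap\overline{D}$ must lie on $\partial D$: an endpoint on $\partial \overline{D}$ would be an intersection point of $D$ with $\torus_R$, which is empty since $D\subset S\cap\T$ is properly embedded and $D$ is a disk component, so $D\cap\torus_R=\partial D$ and $\partial D\cap\partial\overline{D}=\emptyset$. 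This already gives that the arcs have both endpoints on $\partial D$; disjointness and embeddedness give that the arcs $e_1,\dots,e_k$ are mutually disjoint and embedded.

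The remaining task is to eliminate closed circle components. Suppose $c\subset D\cap\overline{D}$ is an innermost such circle on $\overline{D}$, bounding a subdisk $\Delta\subset\overline{D}$ with $\Delta^\circ\cap D=\emptyset$. On the other hand $c$ also bounds a subdisk $\Delta'\subset D$ (since $D$ is a disk). Then $\Delta\cup\Delta'$ is an embedded sphere in $\T$; since $\T$ is irreducible (it is a solid torus, in particular has no essential spheres), this sphere bounds a ball, and we may use the standard innermost-disk exchange: replace $\Delta'\subset D$ by $\Delta\subset\overline{D}$ and push off slightly. This does not increase area — $\Delta$ is totally geodesic, hence area-minimizing among disks with its boundary by convexity of $\torus_R$ and the absence of other minimal surfaces in its interior — so the resulting surface has area no larger than $D$; but $D$ is least area, so the exchange must be area-nonincreasing, and after a generic perturbation it strictly reduces the number of intersection components, contradicting minimality of the count (or one argues directly that a least area disk cannot contain such a trivial bigon/sphere configuration). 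Iterating removes all circle components. Alternatively, and perhaps more cleanly, I would invoke \Cref{D-transversely-intersect-geodesic-foliation-z=c} together with the result of Hass–Scott / the foliation argument of \cite{hj-minimal-surfaces-foliated-manifolds} already cited: a least area disk meeting the totally geodesic foliation $\{z=\text{const}\}$ transversally has no closed leaf-intersection curves, since such a curve would be an innermost intersection producing a contractible-in-a-leaf loop and hence a forbidden exchange.

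The step I expect to be the main obstacle is the clean exclusion of interior circle components: one must make sure the innermost-disk surgery genuinely decreases complexity and does not merely trade one circle for another, and one must be careful that the surgered surface is still embedded and in the same (relative) homology/homotopy class as $D$ so that the least-area hypothesis applies. A safe route is to phrase everything in terms of the intersection $D\cap\overline{D}$ being minimized in the isotopy class already guaranteed by \Cref{D-transversely-intersect-geodesic-foliation-z=c} (after a small tilt of $\overline{D}$), so that no innermost circle can exist, and then the only possibility left is a disjoint union of embedded arcs with endpoints on $\partial D$, which is the assertion.
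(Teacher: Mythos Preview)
Your setup for the arc structure is fine and matches the paper. The genuine difference is in how you rule out closed circle components. The paper does this in one line: if $\beta\subset D\cap\bd$ is a closed curve, then $\beta$ bounds a subdisk $D'\subset D$, and $D'$ is itself a minimal surface with boundary $\beta\subset\bd$; the convex hull property (as in [\citenum{cmCourseMinSurface}, Proposition~1.9]) then forces $D'$ into the convex hull of $\beta$, which lies in the totally geodesic plane $\bd$. So $D'\subset\bd$, contradicting transversality. No surgery, no area comparison, no embeddedness issues.

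Your surgery route can be made to work, but it is more fragile than you acknowledge. The step ``$\Delta$ is area-minimizing among disks with its boundary'' is true, but the reason is precisely the convex hull property applied to any competing minimal disk with boundary $c\subset\bd$ --- the same tool that gives the paper's direct argument. Once you have that, applying it to $\Delta'\subset D$ instead of to $\Delta$ finishes immediately, and the whole exchange/sphere/irreducibility scaffolding becomes unnecessary. Your stated worry about embeddedness after surgery is real: $\Delta$ need not be disjoint from other components of $S\cap\T$, so the surgered surface may fail to be embedded before perturbation, and controlling area through the perturbation requires extra care you have not supplied. The paper's argument sidesteps all of this.
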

\begin{proof}
	The intersection of a least area disk $D$ and a totally geodesic $\bd$ cannot contain a closed loop $\beta$ component. Otherwise, $\beta$ bounds a minimal disk $D' \subset D$, which by the convex hull property $D'$ is a subset of the convex hull of $\beta \subset \bd$. Moreover, it cannot contain an arc with either endpoint lying in the interior of $D$ by [\citenum{bwMinSurfaceLecNote}, Corollary 4.4]. Since the least area disk $D$ and $\bd$ are embedded, their intersection consists of embedded arcs. 
\end{proof}
	Order them naturally by $e_i$ so that the endpoints of $e_i$ separate the endpoints of $e_1, \cdots, e_{i-1}$ and $e_{i+1}, \cdots, e_k$ on $\partial \bd$.  The two outermost edges are $e_1$ and $e_k$ that trap all the other $k-2$ arcs. The arcs divide the disk $\bd$ into $\bd_1, \cdots, \bd_{k+1}$. Consider a fundamental domain $\widetilde{\T}$ of $\T$ in $\hy$, whose boundary consists of two totally geodesic disks (which, for the simplicity of notations, we denote by) $\bd$ and $\bd'$, and also $\partial \T_R$. The arcs divide the disk $D$ into $D_1, \cdots, D_{k+1}$ and similarly, there are $k$ arcs intersection $e_1', \cdots, e_k' \subset \bd'$.
	\begin{figure}[H]
	\centering
		\subfloat[Edges $e_i$ of $\bd \cap D$]{
		\includegraphics[width=.3\linewidth]{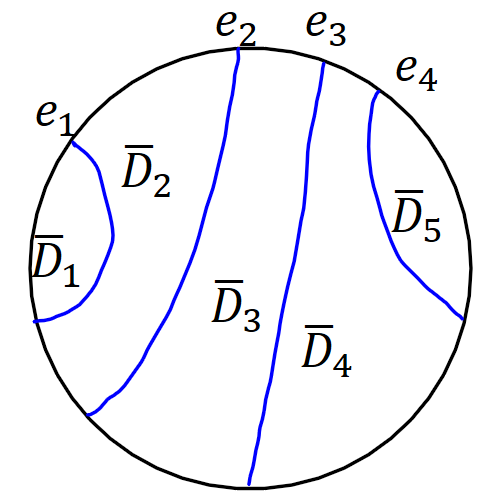}
	}\hspace{0.5in}
		\subfloat[Least area $D$ intersecting the fundamental domain $\widetilde{\T}$]{
		\includegraphics[width=.3\linewidth]{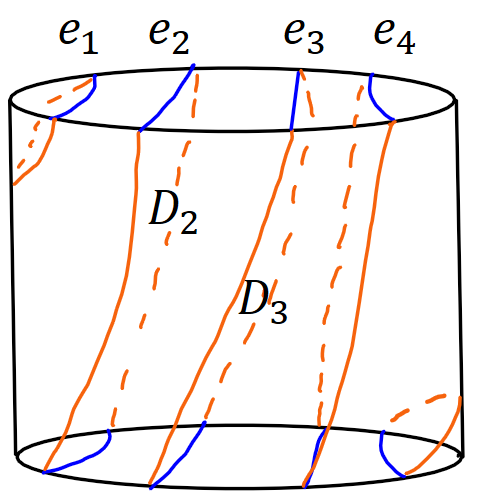}
	}
	\end{figure}
	 We need a further classification on the transverse intersection of $D$ and $\bd$ which captures a certain \textit{complexity} of $D$ in $\T$. 
	 \begin{defi}
	 	We call an arc $e_k$ of the transverse intersection between $D$ and $\bd$ \textbf{persistent} if in $\widetilde{\T}$, the boundary of $D_k$ contains $e_k\subset \bd$ and $e'_{k'}\subset \bd'$. 
	 \end{defi}
\noindent	 Intuitively, the least area $D_k$ intersects the top and bottom geodesic disk of the fundamental domain into $e_k$ and $e'_{k'}$, respectively.
	 The persistent intersection comes from a least area disk component $D_k$ which intersects \textit{every} totally geodesic leaf of the foliation $z=c$. The two-dimensional analogy is the following. We use an annulus embedded in the plane enclosed by two concentric circles $c_1$ and $c_2$ as a model for a tube. A segment in the annulus perpendicular to both $c_1$ and $c_2$ represents a totally geodesic disk. If we apply a Dehn twist to the segment around the core circle, we obtain a segment homotopic to the original one (via a homotopy that slides along the boundary circles of the annulus), but now it intersects each geodesic disk once. With a similar construction, the least area disk can intersect a given totally geodesic disk many times. \par
  	Unlike $*dz$ which is pointwisely zero on $\partial \T_R$, $*df = *f_r\di r$ is nonzero along $\partial \T_R$ in general (see \Cref{counter example Brock-Dunfield without GH convergence}). If the least area disk $D$ intersects $\bd$ many times, the homotopy which slides $D$ onto $\bd$ may sweep the boundary torus $\T$ many times. However, regardless of the number of persistent intersections, we can replace a least area disk $D$ with a totally geodesic disk with uniformly bounded small error. 
\begin{theorem}
	\[\int_D *df  \leq \sup\limits_{  \substack{\bd \tn{ totally} \\ \tn{geodesic disk}}} \int_{\bd} *df + \area(\partial \T_{R})\|f_r\di r\|_{L^{\infty}(\partial \T_{R})} . \]
\end{theorem}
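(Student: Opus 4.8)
The plan is to exploit that $f$ harmonic forces $*df$ to be a \emph{closed} $2$-form on $\T$ (since $\Delta f=d^{*}df=0$ is equivalent to $d{*}df=0$), and then to replace the least area disk $D$ by the totally geodesic meridian disk $\bd$ by a Stokes argument, paying only a controlled error supported on $\torus_R$. The two geometric inputs I would use for the error term are: first, on $\torus_R$ the pullback of $*df$ is just $*(f_r\di r)=f_r\sinh R\cosh R\,d\theta\wedge dz=f_r\,dA_{\torus_R}$, because the $d\theta$ and $dz$ components of $df$ Hodge-dualize to forms involving $dr$, which restrict to zero on $\torus_R$ (this is exactly the observation recorded just before the statement, using the orthonormal coframe \eqref{tubeONB1Forms}); and second, $\int_{\torus_R}*df=0$ by \Cref{int-*df-0-invariant-annulus}, which is what will let me discard the ``full wraps'' of the boundary torus produced by persistent arcs.

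Carrying this out in order: (i) record that $*df$ is closed. (ii) Since $D$ and $\bd$ are both essential embedded disks in the solid torus $\T$, they are ambiently isotopic; fix an ambient isotopy $\Phi_t$ of $\T$ with $\Phi_0=\mathrm{id}$, $\Phi_1(D)=\bd$, which carries $\torus_R=\partial\T$ to itself, and set $W:=\bigcup_t\Phi_t(D)$, a $3$-chain with $\partial W=\bd-D-\Sigma$ where $\Sigma:=\bigcup_t\Phi_t(\partial D)\subset\torus_R$, a $2$-chain with multiplicity function $n_\Sigma\colon\torus_R\to\Z$. (iii) Stokes and $d{*}df=0$ give
$$
0=\int_W d({*}df)=\int_{\bd}{*}df-\int_D{*}df-\int_{\Sigma}{*}df,
$$
hence $\int_D{*}df=\int_{\bd}{*}df-\int_{\Sigma}{*}df\le\sup_{\bd'}\int_{\bd'}{*}df+\bigl|\int_{\Sigma}{*}df\bigr|$. (iv) Using step (i)'s boundary computation, $\bigl|\int_{\Sigma}{*}df\bigr|=\bigl|\int_{\torus_R}n_\Sigma f_r\,dA\bigr|\le\|f_r\di r\|_{L^\infty(\torus_R)}\int_{\torus_R}|n_\Sigma|\,dA$. (v) It then remains to choose $\Phi_t$ so that $\int_{\torus_R}|n_\Sigma|\,dA\le\area(\torus_R)$, and (vi) conclude.

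Step (v) is the main obstacle, and it is where the persistent/non‑persistent analysis is needed. I would realize $\Phi_t$ as a union of isotopies dragging each piece $D_i$ of the decomposition $D=\bigcup_{i=1}^{k+1}D_i$ onto the matching piece $\bd_i$ of $\bd=\bigcup_{i=1}^{k+1}\bd_i$ (they share the same $e$-arcs). For a non‑persistent $D_i$ the drag sweeps an embedded region $\Sigma_i\subset\torus_R$ lying between $\partial D_i\cap\torus_R$ and $\partial\bd_i\cap\torus_R$; since the $D_i$ and the $\bd_i$ have pairwise disjoint interiors, these $\Sigma_i$ can be taken with pairwise disjoint interiors, so together they occupy $\torus_R$ at most once. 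A persistent $D_k$ wraps around the core direction, so dragging it onto $\bd_k$ sweeps $\torus_R$ some number $m_k$ of \emph{complete} times plus an embedded residual region; each complete wrap is a copy of the fundamental class $[\torus_R]$, and adding or removing it changes $\int_\Sigma{*}df$ by $m_k\int_{\torus_R}{*}df=0$ thanks to \Cref{int-*df-0-invariant-annulus}. Thus, up to adding multiples of $[\torus_R]$ (harmless for the integral), $\Sigma$ may be taken to be an embedded subsurface of $\torus_R$, so $\int_{\torus_R}|n_\Sigma|\,dA\le\area(\torus_R)$, which finishes the proof. The delicate point to nail down rigorously is precisely that the sweep trivializing each persistent piece decomposes as (complete wraps) $+$ (an embedded remainder), so that the Lemma absorbs all the multiplicity and the non‑persistent remainders still fit disjointly inside $\torus_R$.
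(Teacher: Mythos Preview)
Your overall strategy is the paper's: use Stokes for the closed form $*df$ to trade $D$ for a totally geodesic $\bd$, and pay the price on $\torus_R$ where the restriction of $*df$ is $f_r\,dA_{\torus_R}$. The place where your argument diverges from the paper, and where there is a genuine gap, is exactly the step you flag in (v).

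Your claim is that dragging a persistent piece $D_k$ onto $\bd_k$ sweeps out $m_k[\torus_R]$ plus an embedded remainder. This is not what happens. The boundary arcs $\partial D_k\cap\torus_R$ and $\partial\bd_k\cap\torus_R$ share endpoints, and the sweep between them is a strip that wraps in the \emph{longitude} direction only (the direction of the core geodesic); its $\theta$-extent is roughly that of $b_k\subset\partial\bd$. Such a strip is not a multiple of the fundamental class of $\torus_R$: subtracting $m_k[\torus_R]$ shifts the multiplicity function by a constant and cannot turn a tall narrow strip of multiplicity $>1$ into something with $|n_\Sigma|\le1$. So the reduction modulo $[\torus_R]$ is too coarse to force $\int_{\torus_R}|n_\Sigma|\,dA\le\area(\torus_R)$.

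What the paper does instead is the finer version you need. For each persistent arc $e_i$ it builds an explicit $\Psi_{I_\gamma}$-\emph{invariant} annulus $A_i$ inside the tube, bounded by $e_i$, $e_i'$ and the two invariant ``vertical'' segments $\overline{x_ix_i'}$, $\overline{y_iy_i'}$ on $\torus_R$; \Cref{int-*df-0-invariant-annulus} (for arbitrary invariant annuli, not just the full torus) gives $\int_{A_i}*df=0$. Stokes is then applied to a $3$-chain whose boundary is $D_i-\bd_i-A_i-W_i''$, so the cost on $\torus_R$ is only the residual pieces $W_i''$. The crucial point, which replaces your unproven disjointness claim, is that the $W_i''$ have pairwise disjoint interiors \emph{because $D$ is embedded and $\mathrm{int}(D)\subset\mathrm{int}(\T)$}; hence $W=\bigcup W_i''$ is an embedded subregion of $\torus_R$ and $|\int_W*df|\le\area(\torus_R)\|f_r\,dr\|_{L^\infty(\torus_R)}$.

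In short: your Stokes/boundary framework is correct, but the objects you must peel off are invariant annuli (using the full strength of \Cref{int-*df-0-invariant-annulus}), not copies of $[\torus_R]$. Once you allow the intermediate surfaces $A_i$ to live inside the tube rather than forcing the entire error onto $\torus_R$ via an ambient isotopy, the decomposition and the disjointness of the residuals fall out exactly as in the paper.
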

\begin{proof}
If in $\T$ there exists a totally geodesic disk $\bd$ which is disjoint from $D$, then $\partial \bd, \partial D \subset \partial \T_R$ together bounds an annulus region $A \subset \partial \T_R$. By Stokes' theorem (with appropriate orientation on $A$) and $*df=*f_r\di r$ on $\partial \T_R$, we have
\[ \int_D *df =  \int_{\bd} *df + \int_A *df \leq  \int_{\bd} *df + \area(A) \|f_r\di r\|_{L^{\infty}(\partial \T_{R})}. \]
Now suppose the least area $D$ intersects every totally geodesic disk $\bd$. By \Cref{D-transversely-intersect-geodesic-foliation-z=c}, there exists a $\bd$ which intersects $D$ transversely. Via a homotopy that fixes the endpoints of $e_1$, we slide $D_1$ onto $\bd_1$ along a region on the boundary $W_1 \subset \partial \T_R$. Denote the endpoints of $e_i$ by $x_i$ and $y_i$ and correspondingly $x_i'$ and $y_i'$ of $e_i'$. The isometry $\ig$ corresponding to the core geodesic $\tilde{\gamma}$ in $\hy$ preserves the $r$-boundary torus $\partial \T_r$ and maps $x_i$ and $y_i$ to $x_i'$ and $y_i'$, respectively. The two points $x_i= (R, 0, z_1+\lambda)$ and $x_i'= (R, \theta_0, z_1)$ bound a ``vertical" segment ${x_ix_i'}=\{(R, \theta_0-\frac{t}{\lambda}\theta_0, z_1+t)|t\in [0, \lambda]\} $ invariant under $\pig$. The four segments $e_i$, ${x_ix_i'}, e_i'$, and ${y_iy_i'}$ bounds an $\pig$-invariant annulus $A_i$. 
\begin{figure}[H]
	\centering
	\includegraphics[scale=0.4]{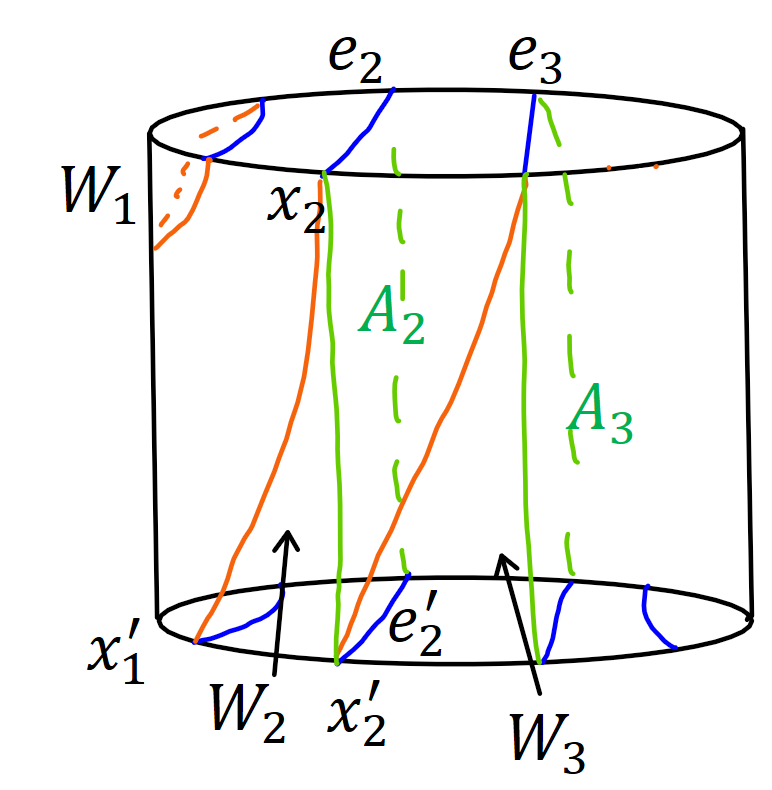}
	\caption{Glue invariant annulus $A_i$ to homotope $D_i$ to $\bd_i$}
\end{figure}
Using \eqref{int-*df-0-invariant-annulus}, we have \[\int_{A_i}*df=0\] for each $i$. The segments ${x_i'x_{i+1}'}, {x_ix_i'}$, and $ \partial D$ bound a region $W_i \subset \partial \T_R$ and ${y_i'y_{i+1}'}, {y_iy_{i}'}$, and $\partial D$ bound a region $W_i' \subset \partial \T_R$. Denote $W_i'' \coloneqq W_i \cup W_i'$. Then 
$$
	\int_{D_i} *df = \int_{A_i} *df + \int_{\bd_i} *df +  \int_{W_i''} *df =  \int_{\bd_i} *df +  \int_{W_i''} *df.
$$
Denote $W \coloneqq \cup W_i''$. Summing up, we have 
$$
		\int_{D} *df = \int_{\bd} *df +  \int_{W} *df.
$$
Since $D$ is embedded and the interior of $D$ is contained in $\T$, no distinct $W_i$ intersects in the interior. This implies that $W$ is an embedded subsurface of $T_R$. The conclusion then follows from $*df = *f_r\di r$ and  
$$\int_{W} *df \leq \area(W) \|f_r\di r\|_{L^{\infty}(\partial \T_{R})}.$$
\end{proof}
Since by \eqref{eq area of tube}
$\area(T_R) =\lambda 2\pi \sinh R \cosh R$
and by \eqref{eq volume of tube}
$\vol(\T_R) =\lambda\pi\sinh^2R,$
we have $\area(T_R) < 2V+2\sqrt{0.29\pi} \sqrt{V}$ by \Cref{uniform intrinsic injectivity radius and diameter}. 
\begin{cor}\label{integral *df least area close to totally geodesic}
Let $M$ be a hyperbolic $3$-manifold with volume $\leq V$ and $b_1(M)>0$. Then there exists $c_D(V)$ such that for all least area disk $D \subset S \cap \T$, there exists some totally geodesic disk $\bd$ with
	\[\int_D *df  \leq \int_{\bd} *df + (2V+2\sqrt{0.29\pi} \sqrt{V}) \|f_rdr\|_{L^{\infty}(\partial \T_R)}. \]
\end{cor}
For any harmonic $f$, \eqref{h'(r)< h(r)8d^2 logcoshr /sinh^2r} shows that $\|f_rdr\|_{L^{\infty}(\partial \T_R)} \rightarrow 0$ but $\int_{D_R} *df$ can tend to infinity as the radius $R$ of the totally geodesic disk $D_R$ tends to infinity. This shows that the error is small if we carry out the computation on a totally geodesic disk instead of a least area one. This corollary makes it possible to shed light on the conjecture without the volume upper bound, which we investigate in \Cref{counter example Brock-Dunfield without GH convergence}.

\subsection{The Thurston norm} 
\begin{defi}[The Thurston norm and taut surfaces]
	For a connected surface $S$, $\chi_-(S) \coloneqq \max\{-\chi(S),0\}$, where $\chi$ is the Euler characteristic. Extend this to disconnected surfaces via $\chi_-(S \sqcup S') =\chi_-(S) + \chi_-(S')$. For a compact irreducible $3$-manifold $M$, the \textbf{Thurston norm} of $\alpha \in H^1(M; \mathbb{Z})$ is defined by 
	\begin{center}
		$\| \alpha \|_{Th}= \min\{\chi_- (S) : S\text{ is an embedded surface dual to } \alpha \}$.
	\end{center}
	The Thurston norm extends to $H^1(M; \mathbb{Q})$ by making it linear on rays through the origin and then extends by continuity to $H^1(M; \R)$.
	A closed incompressible surface $S $ is called \textbf{taut} if $|\chi(S)|=\|[S]\|_{Th}$, i.e., it realizes the minimum topological complexity in its homology class. We assume $S\subset M$ is taut unless stated otherwise. 
\end{defi}
Let $M$ be a closed hyperbolic $3$-manifold, with $\alpha \in H^1(M; \Z)$ harmonic and dual to a least area $S \in H_2(M;\Z)$. Let $\T$ be a Margulis tube. Assume the algebraic intersection number between $S$ and $\T$ is $\kappa(\alpha)$ and the intersection contains essential disks. Since $H^1(\T)$ is $1$-dimension and $dz$ is a (harmonic) generator, on $\T$ the $1$-form $\alpha$ is cohomologous to $\frac{\kappa(\alpha)}{\lambda}dz$ (so that $\int_{\gamma} \alpha =\kappa(\alpha)$). The number $\kappa(\alpha)$ coincides with the winding number related to the Thurston norm on [\citenum{btDehnThurston}, p.392]. Recall from \Cref{classification surfaces in tube} that the components of $S\cap \T$ are essential and boundary-parallel disks. \par
The algebraic intersection number $\kappa=\kappa(\alpha)$ of $S$ and $\gamma$ and the length of $\gamma$ provide a lower bound on the Thurston norm:
\thurstonNormLowerbound*
\begin{proof}
	We establish for $\alpha\in H^1(M;\Z)$ first. 
	Take a taut least area surface $S$ dual to $\alpha$. By the interplay between the area and the Euler characteristic of a least area $S$ in [\citenum{bdNorms}, (3.3)], we have 
	$$\area(S) \leq 2\pi \chi_-(S) =2\pi \|\alpha\|_{Th}$$ 
	which implies $$\|\alpha\|_{Th} \geq \frac{1}{2\pi}\area(S).$$
	Using the monotonicity formula [\citenum{mrMinSurfaceShort}, Corollary 7(2)] and \eqref{tube radius lower bound} on an essential minimal disk component of $S\cap \T$, each disk has an area of at least $$\ds 2\pi (\cosh{R}-1) = 2\pi \left(\frac{0.29}{\sqrt{7.256}}\frac{1}{\sqrt{\lambda}}-1\right) \geq 2\pi\left(\frac{0.107}{\sqrt{\lambda}}-1\right).$$  Since there are at least $|\kappa|$ disks, 
	$$\area(S) \geq 2\pi|\kappa|\left(\frac{0.107}{\sqrt{\lambda}}-1\right)$$
	and \eqref{eq lower bound on Thurston norm sum} follows. By direct computation, if $\lambda \leq 58/10^6$, then $\frac{0.29}{\sqrt{7.256}}\frac{1}{\sqrt{\lambda}}-1>\frac{0.1}{\sqrt{\lambda}}$. 
	
	The proof for $\alpha\in H^1(M;\Q)$ follows from linearity, since both sides of \eqref{eq lower bound on Thurston norm sum} are linear with respect to $\alpha$. Then it extends to $H^1(M;\R)$ by the continuity of the Thurston norm and the Kronecker pairing. 
\end{proof}
In [\citenum{hwCSC}, Proposition 3.3], Huang and Wang have considered a similar inequality between the area of essential disks and genus. 
The above lemma also holds if we replace $\kappa(\alpha)$ the number of algebraic intersections with the number of geometric intersections which also consists of essential disks between  $S$ and $\T$. \par
One application is to apply \Cref{thurston norm lower bound by kappa / square root of length} to the homology $[S] \in H_2(M;\Z)$ of $M$ obtained from Dehn fillings. We can use, for example, [\citenum{nzVolHyp3fold}, Proposition 4.3] for the length of the short core geodesic $\gamma$. We demonstrate this philosophy by using \Cref{thurston norm lower bound by kappa / square root of length} to simplify the proof of [\citenum{bdNorms}, Theorem 1.4(b)], which also shows it is qualitatively sharp.
\begin{exa}\label{example LinkL14Thurston norm}
	There exists a sequence of closed hyperbolic $3$-manifolds $M_n$ whose injectivity radius $\lambda_n \rightarrow0$ and cohomology $\alpha_n$ such that 
		$$\|\alpha_n\|_{Th}\sim \bo\left(\frac{1}{\sqrt{\lambda_n}}\right).$$
\end{exa}
\begin{proof}
 Let $L=L14n21792$ be a two-component fibered link and $W$ be $S^3-L$, whose boundary consists of two tori $\partial W = T_1 \sqcup T_2$. We recall from [\citenum{bdNorms}, p.550] the following properties that $W$ enjoys. 
\begin{enumerate}
	\item $W$ is a cusped hyperbolic $3$-manifold. 
	\item The maps $\iota_k: H_1(T_k;\Z) \rightarrow H_1(W;\Z)$ are isomorphisms, because the two components of $L$ have linking number $1$. 
	\item $W$ has an orientation-reversing involution $\sigma$ that interchanges $T_k$ and acts on $H_1(W;\Z)$ by identity. In fact, $W$ is the orientation cover of the nonorientable census manifold $X=x064$. 
	\item $W$ fibers over the circle. 
\end{enumerate}
 Let $\alpha, \beta$ be two (primitive) fibered classes in a fibered cone $C_F$ which form an integral basis for $H^1(W; \Z)$, and let $a, b \in H_1(W; \Z)$ be the dual basis such that $\alpha(a)=\beta(b)=1$ and $\alpha(b)=\beta(a)=0$. Let $M_n$ be the closed manifold obtained by Dehn filling $W$ along $a−nb$ in $T_i$ and $\alpha_n \in H^1(M_n;\Z)$ the extension of $\widetilde{\alpha_n}=n\alpha+\beta$ to $M_n$. 
\begin{lem}\label{application simplify Brock-Dunfield Theorem 1.4 (b)}
	The Thurston norm of $\alpha_n$ grows at least linearly with respect to $n$. 
\end{lem}
\begin{proof}
	Since $n$ and $1$ are coprime, $n\alpha+\beta$ represents a single curve on $T_i$. Thus the surface $S_n$ dual to $\alpha_n$ has algebraic intersection number $1$ with the core geodesic $\gamma_i$. Using [\citenum{nzVolHyp3fold}, Proposition 4.3], the length of $\gamma_n$ is $\sim \frac{c}{n^2}$ for a uniform constant $c$ as $n \rightarrow \infty$. Using \Cref{thurston norm lower bound by kappa / square root of length}, we have, for $n$ large enough, \[\|\alpha_n\|_{Th} > 0.1\sqrt{c}\cdot n .\]
\end{proof}
To bound the Thurston norm of $\alpha_n$ from above, we show there exist surfaces in the homology class dual to $\alpha_n$ whose Thurston norms grow at most linearly. Note that $\|\cdot\|_{Th}$ is linear on the cone $C_F$. Thus $\|\widetilde{\alpha_n}\|_{Th}= n \|\alpha\|_{Th}+\|\beta\|_{Th}$. Since $S_n$ dual to $\alpha_n$ is obtained by capping off $\widetilde{S_n}$ dual to $\widetilde{\alpha_n}$ with two discs, we have $ \|\alpha_n\|_{Th}\leq |\chi(S_n)|=|\chi(\widetilde{S_n})|-2 =\|\widetilde{\alpha_n}\|_{Th}-2$, which implies that $\|\alpha_n\|_{Th}\leq n \|\alpha\|_{Th}+\|\beta\|_{Th}-2$. 
\end{proof}
\noindent \Cref{thurston norm lower bound by kappa / square root of length} does not give the precise asymptotic as in [\citenum{bdNorms}, p.550 i.], but the lower bound suffices for the proof of the upper bound on $\|\cdot\|_{L^2}/\|\cdot\|_{Th}$ in [\citenum{bdNorms}, Theorem 1.4]. Moreover, the above construction generalizes to $\alpha, \beta$ in a non-fibered cone and still produces cohomology $\alpha_n \in H^1(M_n;\Z)$ which grows linearly. 

\Cref{thurston norm lower bound by kappa / square root of length} also generalizes naturally to the cusp setting. Let $\alpha \in H^1(N; \Z)$ be dual to a taut $S\in H_2(N, \partial N;\Z)$. Suppose $N$ has $n$ cusp neighborhoods which are maximal (pairwise disjoint). Take a cusp neighborhood $E=T\times [1, \infty)$, whose metric with respect to the largest horotorus $T$ is given by $$ds^2_E=dt^2+e^{-2t}ds^2_T$$ for $t\geq 1$. Let $T_t$ be the horotorus at height $t$. We first define $\ell_E(\alpha)$ which is the length of the \textit{cohomology slope}, the shortest loops on $T$ in the kernel of $\alpha|_T$ (counting multiplicities). Then using the co-area formula and the foliation $S\cap T_t$ in $E$,  we will argue $\ell_E(\alpha)$ gives a lower bound on $\area(S\cap E)$. Suppose $\alpha|_T \in H^1(T;\Z)$ is equal to $k(\alpha_xdx+\alpha_ydy)$ where $\alpha_x$ and $\alpha_y$ are coprime. There exists a shortest loop $\gamma_{\alpha}\in H_1(T;\Z)$ in the kernel of $\alpha_xdx+\alpha_ydy$. With the intrinsic Euclidean metric $d_T$ on $T$, $\ell_E(\alpha)$ is defined as $$\ell_E(\alpha)\coloneqq k\ell_{d_T}(\gamma_{\alpha}).$$
  The intersection of $S$ and $E$ is foliated by loops on $T_t$, each of whose minimal length is given by $e^{-t}\ell_E(\alpha)$. Using the co-area formula, we have
   $$\area(S)\geq \int_1^{\infty}\ell_E(\alpha)e^{-t}dt = \frac{\ell_E(\alpha)}{e}.$$ Using the Gauss-Bonnet theorem, $ 2\pi \| \alpha\|_{Th} \geq \area(S)$. Using the thick-thin decomposition and considering the intersection of a taut surface $S$ with the short geodesics and maximal cusp neighborhoods, we obtain
\begin{lem}\label{thurstonNormLowerboundCusped}
	Let $N$ be a cusped hyperbolic $3$-manifold with $n$ maximal cusp neighborhoods $E_i$ and $m$ short geodesic $\gamma_i$ of length $\lambda_i$. Then the Thurston norm of $\alpha \in H^1(N;\Z)$ has a lower bound
	$$\|\alpha\|_{Th} \geq \frac{1}{2\pi e}\sum_{i=1}^{m} \ell_{E_i}(\alpha)+ \sum\limits_{i=1}^n \left|\int_{\gamma_i} \alpha\right| \left(\frac{0.107}{\sqrt{\lambda_i}}-1 \right).$$ 
\end{lem}	

\section{The proof of \Cref{theorem main conjecture BD}}\label{section the proof}
We start with some basic lemmas. Let $\T$ be a Margulis tube with a short geodesic $\gamma$ whose length is $\ell(\gamma)$. Let $\alpha$ be a harmonic $1$-form, with $\kappa(\alpha)=\int_{\gamma}\alpha$.
\begin{lem}\label{harmonic 1-form decomposition in tube}
On a Margulis tube $\T$, a harmonic $1$-form $\alpha$ admits an $L^2$-orthogonal decomposition
$$
\alpha=\frac{\kappa(\alpha)}{\ell(\gamma)}dz+df,
$$
where $f$ is harmonic and $\langle dz, df \rangle_{L^2(\T)}=0$. In particular, 
\begin{equation}\label{eq df L2 leq alpha L2}
	\|df\|_{L^2(\T)}\leq \|\alpha\|_{L^2(\T)}.
\end{equation}
\end{lem}
\begin{proof}
	Since $H^1(\T; \R)$ is $1$-dimensional and $\int_{\gamma}\frac{1}{\ell(\gamma)}dz=1$, the $1$-form $\alpha$ is cohomologous to $\frac{\kappa(\alpha)}{\ell(\gamma)}dz$. Thus 
	\begin{equation}\label{eq alpha-dz=df}
		\alpha -\frac{\kappa(\alpha)}{\ell(\gamma)}dz =df \quad \tn{ for some } f \in C^{\infty}(\T).
	\end{equation}
Since both $\alpha$ and $dz$ are harmonic (see [\citenum{bdNorms}, 6.1]), \eqref{eq alpha-dz=df} implies that $df$ is also harmonic and thus $d*df=0$. Denote $d^*$ the codifferential operator. Then $\Delta f= dd^*f+d^*df=-*d*df=0$ which implies that $f$ is harmonic. For $f(r,\theta, z)$, using \eqref{tubeONB1Forms}, we have
\begin{align*}
	df &= f_rdr+f_{\theta} d\theta +f_zdz, \\ 
	*df &= f_r(\sinh{r}\cosh{r}\di \theta \wedge dz)+f_{\theta}\frac{\cosh{r}}{\sinh{r}} dz\wedge dr + f_z \frac{\sinh{r}}{\cosh{r}}dr\wedge d\theta.  \\
\end{align*}
Moreover, 
\[
dz\wedge *df + *dz \wedge df = 2 f_z \frac{\sinh{r}}{\cosh{r}} dr \wedge d\theta \wedge dz. 
\]
Its integral on $\T$ is 
\begin{align*}
	\ds \int_{\T} dz\wedge *df + *dz \wedge df 
	&= 2\int_{0}^{R}\int_{0}^{2\pi} \int_{0}^{\ell(\gamma)} f_z\frac{\sinh{r}}{\cosh{r}}dr \wedge d\theta \wedge dz \\
	&= 2\int_{0}^{R}\int_{0}^{2\pi} f(r,\theta, \ell(\gamma))- f(r,\theta, 0) d\theta \wedge \frac{\sinh{r}}{\cosh{r}}dr\\
	&= 2\int_{0}^{R}\int_{0}^{2\pi} f(r,\theta+\theta_0, 0)- f(r,\theta , 0) d\theta \wedge \frac{\sinh{r}}{\cosh{r}}dr\\
	&=0,
\end{align*}
where $\theta_0$ is the twist parameter of the tube and we have used the harmonic expansion of $f$ in \eqref{harmonic expansion into hypergeometric}.  
Finally, let $c \coloneqq \frac{\kappa(\alpha)}{\ell(\gamma)}$. Since $\alpha =cdz+df$, we have  
\begin{align*}
	\ds \|\alpha \|_{L^2}^2 -\|df \|_{L^2}^2 &=  \int_{\T} (df +c dz )\wedge (*df +c*dz) -\int_{\T} df \wedge *df \\
	&= \int_{\T} cdz \wedge *df + c*dz \wedge df + c^2 dz \wedge *dz \\ 
	&=  \int_{\T} c^2 dz \wedge *dz \\
	&=\|cdz \|_{L^2}^2.
\end{align*}
\end{proof}

We now prove the main theorem \ref{theorem main conjecture BD}. 
\theoremMainConjectureBD*
	\begin{proof}
		We prove it for $\alpha \in H^1(M;\Z)$, and in general, it follows from extension by linearity to rational coefficients and then by continuity to real coefficients. Take a taut and least area surface $S$ dual to $\alpha$. \par
		We first assume $M$ only has one Margulis tube $\T$ and (for technical reasons) its maximal radius is $R+1$. Let $\T_R$ denote the subtube of radius $R$. 
	\begin{lem}\label{c}
	  There exists a uniform constant $c_1(V)$ such that
		$$
			\int_{S \cap \T_R} *df  \leq c_1(V) \area(S \cap \T_R) \|\alpha\|_{L^2(M)}.
	$$
	\end{lem}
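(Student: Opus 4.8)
The plan is to decompose $S_j \cap \T_j(R)$ into its constituent pieces according to \Cref{classification surfaces in tube} and \Cref{uniform depth of boundary-parallel disk}, and estimate $\int *df$ on each piece. Recall $S_j \cap \T_j(R)$ consists of a collection of essential disks $D$, a collection of boundary-parallel disks $B$, and possibly incompressible annuli (in the case that case (2) of \Cref{classification surfaces in tube} holds); note that cases (1) and (2) are mutually exclusive, so either we have essential disks plus boundary-parallel disks, or annuli plus boundary-parallel disks.

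First I would handle the essential disks. For each essential disk $D \subset S_j \cap \T_j(R)$, apply the previous theorem (the one bounding $\int_D *df$ by $\int_{\bd} *df + \area(\torus_R)\|f_r\di r\|_{L^\infty(\torus_R)}$), or more precisely its quantitative version \Cref{integral *df least area close to totally geodesic}, which gives $\int_D *df \leq \int_{\bd} *df + c\|f_r\di r\|_{L^\infty(\torus_R)}$ for a totally geodesic disk $\bd$. The key point is that on a totally geodesic disk $\bd = \{z = c\}$ we have $*df = f_r(\sinh r \cosh r)\,d\theta\wedge dz$ restricted appropriately, but actually on $\bd$ the relevant term is $f_z \frac{\sinh r}{\cosh r}dr \wedge d\theta$; in any case one bounds $\int_{\bd} *df$ by $\area(\bd)\|df\|_{L^\infty(\bd)}$ and then converts the $L^\infty$ norm to an $L^2$ norm over the slightly larger tube $\T_j(R+1)$ using \eqref{l infinity bounded by 5 / square root inj times L2} together with the uniform injectivity radius near $\partial\T_j$ coming from \eqref{tube radius lower bound} and \cite{fpsEffectiveDistanceNestedTubes}. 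Summing over the (at most $|\kappa(S_j)|$) essential disks and absorbing constants into $c(N)$, the total is bounded by $c\,\area(S_j \cap \T_j(R))\,\|df_j\|_{L^2(\T_j(R+1))}$, since $\area(\bd) \le \area(D)$ by the least-area property. The boundary-parallel disks $B_j$ are already handled by the immediately preceding lemma, giving $\int_{B_j(R)} *df \le c\,\area(B_j(R))\,\|df_j\|_{L^2(\T_j(R+1))}$; one sums these contributions as well.

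For the annulus case, I would use \Cref{int-*df-0-invariant-annulus}: an incompressible annulus component is $\partial$-parallel and (after a homotopy fixing its boundary onto $\torus_R$, as in the proof of that lemma, replacing the invariant-annulus hypothesis by the observation that boundary-parallel annuli on $\torus_R$ can be taken invariant under the isometry $\Psi_{\ig}$, or directly by a Stokes argument across the region between the annulus and $\torus_R$) contributes $\int_A *df$ equal to an integral over a subsurface of $\torus_R$, hence bounded by $\area(\text{subsurface})\|f_r\di r\|_{L^\infty(\torus_R)}$, which again converts to the desired $L^2$ bound. The embeddedness of $S_j$ ensures the various boundary subsurfaces $W$ swept out do not overlap in their interiors, so all the area terms sum to at most $\area(S_j \cap \T_j(R+1))$ plus $\area(\torus_R) \sim \tfrac12\vol(\T_R)$, which is uniformly bounded by a constant depending on $N$ (as computed just before \Cref{integral *df least area close to totally geodesic}).

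The main obstacle I anticipate is bookkeeping: making sure that the total area of all the boundary subsurfaces $W_i$ of $\torus_R$ swept out while homotoping the various disk and annulus components onto totally geodesic disks (respectively onto $\torus_R$) really does stay uniformly controlled — a priori the number of persistent intersections is unbounded, so the homotopy can wrap around $\torus_R$ many times. The resolution, already present in the proof of the theorem above, is that embeddedness of $S_j$ forces the $W_i''$ to be disjoint in their interiors, so their total area is at most $\area(\torus_R)$, which is $O(1)$ by geometric convergence. Once that is in hand, everything else is a routine triangle-inequality assembly of the per-component estimates, with the uniform constant $c(N)$ coming from the uniform geometry of Margulis tubes along the geometrically convergent sequence.
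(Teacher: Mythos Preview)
Your decomposition-by-surface-type approach has a genuine gap in the essential-disk step. You want to bound $\int_{\bd}*df$ by $\area(\bd)\,\|df\|_{L^\infty(\bd)}$ and then convert the $L^\infty$ norm to $L^2$ via \eqref{l infinity bounded by 5 / square root inj times L2} using ``the uniform injectivity radius near $\partial\T_j$.'' But the totally geodesic disk $\bd=\{z=c\}$ is not near $\partial\T_j$: it passes right through the core geodesic, where the injectivity radius is $\lambda_j/2\to 0$. So \eqref{l infinity bounded by 5 / square root inj times L2} gives you $\|df\|_{L^\infty(\bd)}\le 5\lambda_j^{-1/2}\|df\|_{L^2}$, and the constant blows up. This is exactly the difficulty the lemma is meant to overcome, and your proposal does not address it.

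The paper takes a completely different and much shorter route that sidesteps the decomposition entirely. The key ingredient you are missing is a maximum-principle argument: since $|df|^2+f^2$ is subharmonic on $\T_j$ (by [\citenum{hansHarmonicForms}, (4.10)]), its maximum is attained on $\partial\T_j$, so $\|df\|_{L^\infty(\T_j)}^2\le\max_{\partial\T_j}(|df|^2+f^2)$. Normalizing $f$ by an additive constant and using the uniformly bounded diameter of $\partial\T_j$ (from geometric convergence) gives $\|df\|_{L^\infty(\T_j)}\le c(N)\,\|df\|_{L^\infty(\partial\T_j)}$. Now the boundary torus $\partial\T_j$ \emph{does} have a uniform injectivity radius lower bound, so \eqref{l infinity bounded by 5 / square root inj times L2} applies there and yields $\|df\|_{L^\infty(\T_j(R))}\le c(N)\,\|df\|_{L^2(\T_j(R+1))}$. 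The lemma then follows from the trivial pointwise bound $\int_{S_j\cap\T_j(R)}*df\le\area(S_j\cap\T_j(R))\,\|df\|_{L^\infty(\T_j(R))}$, with no need to classify the components of $S_j\cap\T_j$ at all. Your per-component machinery (Corollary~\ref{integral *df least area close to totally geodesic}, Lemma~\ref{int-*df-0-invariant-annulus}) is used elsewhere in the paper, but not here.
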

\begin{proof}
		The function $|df|^2+f^2$ is subharmonic by [\citenum{hansHarmonicForms}, (4.10)] and hence satisfies the maximum principle for subharmonic functions. Thus we have 
		\begin{equation}\label{df L-infinity < boundary |df|^2+f^2 L-infinity}
			\|df\|^2_{\li(\T)} \leq \max\limits_{\T}(|df|^2+f^2) = \max\limits_{\partial \T}(|df|^2+f^2 ).
		\end{equation}
		Since \eqref{df L-infinity < boundary |df|^2+f^2 L-infinity} holds for any primitive $f$, by adding a constant if necessary, we can assume $\max\limits_{\partial \T} f=-\min\limits_{\partial \T} f$. Then we have \[2\max\limits_{\partial \T}f=\max\limits_{\partial \T}f-\min\limits_{\partial \T} f \leq \diam (\partial \T) \|df\|_{\li(\partial\T)}\]
		where $\diam (\partial \T)$ is the intrinsic diameter of the boundary torus, which is uniformly bounded above by a constant $d(V)$ by \Cref{uniform intrinsic injectivity radius and diameter}. Thus \eqref{df L-infinity < boundary |df|^2+f^2 L-infinity} implies 
		$$
	\|df\|^2_{\li(\T)} \leq  \left(\frac{d^2(V)}{4}+1\right)\|df\|_{\li(\partial \T)}^2.
	$$
Let $\delta$ be the injectivity radius of $\partial \T_R$ which is uniformly bounded from below by \Cref{R tube injectivity radius}. By making $\delta$  slightly smaller, at $p \in \partial \T_R$, the $\delta$-ball $B_{\delta}(p)$ is embedded and contractible, and also contained in $\T_{R+1}$. By \eqref{eq l infinity bounded by 5 / square root inj times L2}, there exists a constant $c=c(\delta)$ such that. 
		$$
		\|df\|_{\li(\partial\T_R)} \leq c(\delta) \|df\|_{L^2(B_{\delta})} \leq c(\delta) \|df\|_{L^2(\T_{R+1})}.
	$$
		Thus we have 
		$$
			\|df\|_{\li(\T_R)} \leq c(\delta) \sqrt{\frac{d^2(V)}{4}+1} \|df\|_{L^2(\T_{R+1})}
		$$
Renaming the constant $c(\delta) \sqrt{\frac{d^2(V)}{4}+1}$ by $c_1(V)$, we have
\begin{align*}
	\int_{S \cap \T_R} *df &\leq \area(S \cap \T_R) 	\|df\|_{\li(\T_R)} \\
\numberthis \label{integral *df leq area L2}	&\leq c_1(V) \area(S \cap \T_R) \|df\|_{L^2(\T_{R+1})} \\
	&\leq c_1(V) \area(S \cap \T_R) \|\alpha\|_{L^2(M)}
\end{align*}
where in the last step we have used \Cref{harmonic 1-form decomposition in tube}. 
\end{proof}
	 For $dz$, we adopt a different strategy. 
		\begin{lem}
			On a boundary-parallel disk $B$,
			\[
			\int_B *dz=0 \text{ and } \int_B *\alpha = \int_B *df. 
			\]
		\end{lem}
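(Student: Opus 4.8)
The plan is to reduce the first identity to a flux computation on the boundary torus and then deduce the second identity from linearity. First I would record that $*dz$ is a \emph{closed} $2$-form on $\T$: this is exactly the statement that $dz$ is coclosed on the tube, which follows from $dz$ being harmonic (see [\citenum{bdNorms}, 6.1]). Concretely, writing $dz$ in the orthonormal coframe \eqref{tubeONB1Forms} one computes $*dz=\tanh r\,(dr\wedge d\theta)$, so that $d(*dz)=\tfrac{1}{\cosh^2 r}\,(dr\wedge dr\wedge d\theta)=0$.

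Next I would use that $B$ is boundary-parallel. By definition there is a homotopy of $B$, fixing $\partial B$, onto a disk $B_1$ whose image lies in $\torus_R=\partial\T$; equivalently, $\partial B$ is null-homotopic in $\torus_R$ and hence bounds a disk $A\subset\torus_R$, and since $\T$ is a solid torus (so $H_2(\T;\Z)=0$) the sphere $B\cup_{\partial B}A$ bounds a region $V\subset\T$. In either formulation, Stokes' theorem applied to the closed form $*dz$ — over the $3$-chain swept out by the homotopy, or over $V$ — gives $\int_B *dz=\pm\int_{B_1}*dz$ with $B_1$ supported in $\torus_R$. But on $\torus_R=\{r=R\}$ the differential $dr$ pulls back to $0$, so $*dz$ restricts to $0$ there and $\int_{B_1}*dz=0$. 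Hence $\int_B *dz=0$.

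Finally, by \Cref{decomposition of harmonic 1-form alpha in tube} we have $\alpha=\tfrac{\kappa(S)}{\lambda}dz+df$ on $\T$, so $*\alpha=\tfrac{\kappa(S)}{\lambda}*dz+*df$, and therefore
$$\int_B *\alpha=\frac{\kappa(S)}{\lambda}\int_B *dz+\int_B *df=\int_B *df.$$

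The argument is essentially bookkeeping, and there is no serious obstacle. The only point I would take care to spell out explicitly, rather than hand-wave, is the topological input: that a boundary-parallel disk can be pushed, rel boundary, into $\torus_R$ (so that the relevant relative cycle is homologous into $\torus_R$), and that $*dz$ being closed is precisely what makes Stokes applicable. Orientation signs are irrelevant here because the boundary-torus integral vanishes outright.
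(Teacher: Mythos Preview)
Your proof is correct and follows essentially the same route as the paper: both compute $*dz=\tanh r\,dr\wedge d\theta$, observe it vanishes pointwise on $\torus_R$, push the boundary-parallel disk $B$ rel boundary onto a disk in $\torus_R$ and apply Stokes (using that $*dz$ is closed), then derive the second identity from the decomposition $\alpha=\tfrac{\kappa(S)}{\lambda}dz+df$. Your version is slightly more explicit about the topological justification and orientation signs, but there is no substantive difference.
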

		\begin{proof}
			In cylindrical coordinates, the area two-form of $\partial \T_R$ is proportional to $dz \wedge d\theta$. It follows $ *dz = \frac{\sinh r}{\cosh r} dr \wedge d\theta=0$ \textbf{pointwisely} on $\partial \T$. The disk $B$ is homotopic onto a disk $B' \subset \partial\T$ relative to $\partial B$. Since $*dz$ is closed, by Stokes' theorem, we have $\int_B *dz=\int_{B'} *dz $ which is equal to $0$. The second equality follows from \Cref{harmonic 1-form decomposition in tube}. 
		\end{proof}
	\begin{lem}\label{dzEqualL2Normdz}
		For the intersection $S\cap \T$, we have
		\begin{equation}
			\ds \int_{S \cap \T} *\frac{\kappa(\alpha)}{\ell(\gamma)}dz =\left \| \frac{\kappa(\alpha)}{\ell(\gamma)}dz \right\|_{L^2(\T)}^2.
		\end{equation}
		\begin{proof}
		The intersection $S \cap \T$ consists of $|\kappa(\alpha)|$ least area essential disks $D$ and some boundary-parallel disks $B$. We already show $\int_B *dz=0$.
		If all essential disks are totally geodesic, then their area $2$-forms are $\sinh r \di r \wedge d\theta$. By  [\citenum{bdNorms}, (6.3)] and a direct computation, 
		\begin{align*}
			\ds \int_{S \cap \T} *\frac{\kappa(\alpha)}{\ell(\gamma)}dz 
			&=\kappa^2(\alpha)\frac{1}{\ell(\gamma)} \int_{0}^{R} \int_{0}^{2\pi} \frac{1}{\cosh r}(dr\wedge \sinh r \di \theta )\\
			&=\kappa^2(\alpha)\frac{2\pi}{\ell(\gamma)} \log \cosh{R} \\
			&= \frac{\kappa^2(\alpha)}{\ell^2(\gamma)}2\pi\ell(\gamma)\log \cosh{R}\\
			&= \| \frac{\kappa(\alpha)}{\ell(\gamma)}dz \|_{L^2(\T)}^2.
		\end{align*} 
		For the least area case, using a homotopy that slides along the boundary torus $\partial \T_R$, the least area disks are homotopic to totally geodesic ones. Since $*dz$ is closed and pointwisely zero along $\partial \T_R$, using Stokes' theorem, \Cref{dzEqualL2Normdz} holds for the least area $S$.
		\end{proof}
	\end{lem}
 Recall $R+1$ is the maximal radius of the Margulis tube $\T$ and suppose a slightly smaller Margulis constant $\epsilon$ gives rise to a Margulis tube $\T_R$ of radius $R$ (\Cref{R tube injectivity radius} implies the Margulis constant $\epsilon$ for the $R$-tube is $\geq  0.0045$). Let $\area(\cdot)$ denote the area functional and $(\cdot)_{\geq \epsilon}$ denote the $\epsilon$-thick part. Using a thick-thin decomposition with Margulis constant $\epsilon$, \eqref{eq l infinity bounded by 5 / square root inj times L2}, \Cref{harmonic 1-form decomposition in tube}, and \Cref{dzEqualL2Normdz}, we have	

		\begin{align*}
			\|\alpha\|^2_{L^2} & =\int_{S} *\alpha =\int_{S \cap (M)_{\geq \epsilon}} *\alpha + \int_{S \cap (M)_{\leq \epsilon}} *\alpha\\ 
			&=\int_{S \cap (M)_{\geq \epsilon}}*\alpha+\int_{S \cap \T_R} *df +\int_{S \cap \T_R} *\frac{\kappa(\alpha)}{\ell(\gamma)}dz  \numberthis \label{mainDerivation}\\
			&\leq c_2(\epsilon) \|\alpha \|_{L^2} \area((S)_{\geq \epsilon}) + c_1(V) \|\alpha\|_{L^2} \area((S)_{\leq \epsilon})+\frac{2\pi\kappa^2(\alpha)}{\ell(\gamma)} \log \cosh{R} \\ 
			& = c_3(V) \|\alpha \|_{L^2} \area(S) +\kappa^2(\alpha)\frac{2\pi}{\ell(\gamma)} \log \cosh{R} \\
			&\leq c_3(V) 2\pi \|\alpha \|_{L^2} \|\alpha \|_{Th} + 200\pi \|\alpha \|_{Th}^2 \log \cosh{R},  \numberthis \label{mainUpperbound}\\
{\tiny }		\end{align*} 
		where in the last inequality, we have used $\area(S) \leq 2\pi \|\alpha\|_{Th}$ from [\citenum{bdNorms}, (3.2)] and \Cref{thurston norm lower bound by kappa / square root of length} which bounds the Thurston norm from below by the pairing $\kappa$. Solving the quadratic inequality, we then have
		\begin{equation}\label{eq l2 thurston cosh R}
				\ds \frac{\| \alpha \|_{L^2}}{\| \alpha \|_{Th}} \leq  c_3(V)\pi + \sqrt{c_3(V)^2\pi^2 + 200\pi \log \cosh{R}}.
		\end{equation}		
	By the volume formula for the tube,
	$$ \pi \lambda (\cosh^2 R-1) <V$$
	which implies 
	$$\log \cosh R < \frac{1}{2} \log \left(\frac{V}{\pi\lambda}+1\right).$$ 	
	Thus \eqref{eq l2 thurston cosh R} rewritten in terms of the injectivity radius $\lambda$ is 
	\begin{align*}
			\ds \frac{\| \alpha \|_{L^2}}{\| \alpha \|_{Th}} &\leq  c_3(V)\pi + \sqrt{c_3(V)^2\pi^2 + 100\pi  \log \left(\frac{V}{\pi\lambda}+1\right)}	\\
			&\leq 2	\sqrt{c_3(V)^2\pi^2 + 100\pi  \log \left(\frac{V}{\pi\lambda}+1\right)}	
	\end{align*}
	If we rename $c_3(V)$ as $c(V)$, this proves \Cref{theorem main conjecture BD} when $M$ only has one Margulis tube. If $M$ has $n$ tubes $\T_{i}$ with the length of the core geodesic $\ell(\gamma_{i})\leq 58/10^6$, then the right-hand side of \eqref{mainDerivation} is replaced with the following: 
		\[
		\int_{S \cap (M)_{\geq \epsilon}} *\alpha + \sum_{i=1}^{n}\int_{S \cap T_{i}} *df_{i} + \sum_{i=1}^{n}\int_{S \cap T_{i}} *\frac{\kappa_i(\alpha)}{\ell(\gamma_{i})}dz_{i}.
		\]
		Note that in dimension $3$, the thick part $(M)_{\geq \epsilon}$ of a hyperbolic manifold is connected. Since different Margulis tubes are disjoint and $\area((S)_{\leq \epsilon}) = \sum_{i=1}^n  \area(S \cap \T_{i})$, the second term is again bounded from above by $c_3(V) \|\alpha\|_{L^2} \area((S)_{\leq \epsilon})$. Suppose $\T_{1}$ denotes the Margulis tube with the shortest geodesic, with the largest radius $R_{1}+1$. We have
		\begin{align*}
			\sum_{i=1}^{n}\int_{S \cap T_{i}(R_{i})} *\frac{\kappa_i(\alpha)}{\ell(\gamma_{i})}dz_{i}&=2\pi\sum_{i=1}^{n}\frac{\kappa_i^2(\alpha)}{\ell(\gamma_{i})} \log \cosh{R_{i}} \\
		(\tn{since } R_{i}\leq R_{1}) \quad	&\leq 2\pi\log \cosh{R_{1}} \sum_{i=1}^{n}\frac{\kappa_i^2(\alpha)}{\ell(\gamma_{i})} \\
			&\leq 2\pi\log \cosh{R_{1}} \left(\sum_{i=1}^{n}\frac{\kappa_i(\alpha)}{\sqrt{\ell(\gamma_{i})}}\right)^2 \\ 
		(\tn{by } \eqref{eq asymptotic lower bound on Thurston norm}) \quad	&\leq 200\pi \|\alpha\|_{Th}^2\log \cosh{R_{1}} .
		\end{align*}
		Thus the sum of the three terms is bounded by the right-hand side of \eqref{mainUpperbound} just like the one-tube case. Finally, note that the injectivity radius of $M$ equals $\ell(\gamma_{1})/2$.
	\end{proof}
\begin{rem}
	When the dimension is two, a Margulis tube $\T$ can be separating. If $\alpha$ is supported in a component of the complement of $\T$, the norm of $\alpha$ across the tube satisfies a certain decay rate \cite{bmmsEnergyharmonic1-formSurface}. When the dimension is three, although the Margulis tubes are non-separating, their boundary still serves a similar role. The essential intersection in the thin part contributes to the conjectural growth rate while the estimates in the thick part stay uniformly bounded relative to the area in view of \eqref{eq l infinity bounded by 5 / square root inj times L2}.
\end{rem}
Without the essential intersections in the thin part, \Cref{theorem main conjecture BD} can be improved significantly. 
	\begin{cor}\label{linearThurL2Inequality}
		If all classes in $H^1(M;\Z)$ pair trivially with the geodesics of length  $\leq 58/10^6$,  \Cref{theorem main conjecture BD} simplifies with the same constant $c(V)$:
		$$
			\|\cdot\|_{L^2} \leq 2\pi c(V)  \|\cdot \|_{Th} \tn{ on } H^1(M; \R).
		$$
	\end{cor}
	\begin{proof}
		The trivial pairing implies \eqref{eq alpha-dz=df} simplifies and becomes $\alpha=df$ in the Margulis tubes. As a consequence, \eqref{mainDerivation} simplifies (the term with $*dz$ vanishes) and gives 
		$$
			\|\alpha\|_{L^2} \leq 2\pi c(V)  \|\alpha \|_{Th}.
		$$
		\end{proof}	
\noindent	This motivates \Cref{def homologically thick}. 
\section{The generic growth and \ehl}\label{section generic growth}
Recall that a closed incompressible surface $S$ in a cusped  $N$ is called peripheral if $S$ is isotopic to a boundary component of $N$. One goal of this section is to show that a least area closed incompressible surface $S$ in $M$ obtained from a ``generic" Dehn filling of $N$ ``looks like" a nonperipheral surface (\Cref{generic Dehn fillings have surfaces supported in the thick part}). We start by setting up some basic definitions. Let $\epsilon$ be a Margulis constant of $M$. 
\begin{defi}\label{def homologically thick}
 A closed hyperbolic $3$-manifold $M$ is \textbf{$\epsilon$ homologically thick} if and only if every closed geodesic $\gamma$ of length $\leq \epsilon$ is $0$ in $H_1(M; \Q)$. 
\end{defi}
\noindent This is equivalent to saying that any $[S] \in H_2(M)$ has trivial algebraic intersections with the tubes $(M)_{\leq \epsilon}$ by the \pc duality.
\noindent The notion of \ehl is ``generic". 
\begin{defi}[A generic Dehn filling]
	Let $N$ be a finite-volume hyperbolic manifold with $n$ cusps and let $(p_1/q_1, \cdots, p_n/q_n)\in T^n$ be the space of Dehn filling coefficients. By a \textbf{generic} Dehn filling, we mean Dehn fillings $(p, q) \in T^n-X(N)$ where $X(N)$ is a piecewise smooth finite subcomplex of codimension at least $1$.
\end{defi}
\begin{defi}[A generic manifold of volume $\leq V$]\label{def generic manifold of volume less than V}
	By the \tj theory, all closed hyperbolic $3$-manifolds come from Dehn fillings of finitely many cusped manifolds $N_1, \cdots, N_n$. For each $N_i$, we remove one codimension-one complex from the Dehn filling space, and we call the rest of the hyperbolic Dehn fillings generic.
\end{defi}

	Thurston's hyperbolic Dehn surgery theorem \cite{thurstonNotesgeometrytopology3manifolds} asserts that except for finitely many slopes in $T^n$, all Dehn fillings are closed hyperbolic. Hatcher's theorem \cite{haBoundaryCurveIncom} shows that the subset $BCS(\partial N)$ of the curve system $CS(\partial N)$ that bounds incompressible and boundary-incompressible surfaces in $N$ is contained in the image of $H$ which is the union of finitely many rank-$n$ subgroups of $H_1(\partial N) \cong \Z^{2n}$, since there are finitely many branched surfaces which carry all incompressible and boundary-incompressible surfaces $S$ in $N$ \cite{foIncompressibleBranchedSurfaces} and $\partial S$ belongs to a self-annihilating subspace of $\Z^{2n}$. The image of $H$ is then used to define the finite subcomplex $X(N)$ of $T^n$. If $M$ is obtained from a Dehn filling of $N$, an incompressible surface $S$ in $M$ that has a nonzero geometric intersection with $(M)_{<\epsilon}$ comes from a properly embedded surface $S'$ with boundary in $N$. Together we have for a generic Dehn filling $M$, $H_2(M)$ are represented by surfaces that have a trivial geometric intersection with $(M)_{<\epsilon}$ and $M$ is $\epsilon$ homologically thick. Thus we have proven
\begin{lem}\label{generic Dehn filling homologically thick}
	A closed hyperbolic $3$-manifold $M$ obtained from a generic Dehn filling of $N$ is $\epsilon$ homologically thick. In particular, any closed incompressible surface $S \subset M$ is homotopic to surfaces $\subset (M)_{\geq\epsilon}$.
\end{lem}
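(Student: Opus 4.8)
The plan is to prove the stronger \emph{geometric} statement --- that for a generic filling $M$ no closed incompressible surface can meet the $\epsilon$-thin part --- and then read off $\epsilon$-homological thickness. Since $N$ is fixed, I would first replace $\epsilon$ by a (possibly smaller) Margulis constant chosen below the length of every closed geodesic of $N$ lying outside the cusp neighbourhoods; there are only finitely many geodesics of $N$ of length below the original constant, so this is possible. By Thurston's hyperbolic Dehn surgery theorem, all but finitely many Dehn fillings $M$ of $N$ are then closed hyperbolic and have $\epsilon$-thin part exactly $\bigsqcup_{j=1}^{n}\T_j$, where $\T_j$ is the Margulis tube about the core geodesic $\gamma_j$ of the $j$-th filling solid torus $V_j$.

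The heart of the argument will be a normal-form reduction. Suppose $S\subset M$ is a closed incompressible surface meeting some $\gamma_j$, and pass to a least area representative in its isotopy class. By \Cref{classification surfaces in tube} and the depth bound of \Cref{uniform depth of boundary-parallel disk}, the boundary-parallel disk components of $S\cap\T_j$ and the $\partial$-parallel annulus components both stay in a fixed neighbourhood of $\partial\T_j$ and cannot meet $\gamma_j$; hence $S\cap\T_j$ must contain essential disks. I would then isotope $S$ inside each $V_j$ so that $S\cap V_j$ is a union of meridian disks, and isotope $S$ to minimise $|S\cap\bigcup_j\gamma_j|$. The surface $S':=S\cap N$ becomes properly embedded, incompressible and $\partial$-incompressible in $N$, and $\partial S'$ is a nonempty curve system each of whose components on the $j$-th cusp is a curve of the filling slope $p_j/q_j$.

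Next I would apply Hatcher's theorem in the form recalled before the statement. Since finitely many branched surfaces carry all incompressible, $\partial$-incompressible surfaces in $N$ (Floyd--Oertel) and $[\partial S']$ lies in the self-annihilating subspace $\operatorname{Im}\bigl(H_2(N,\partial N;\Z)\to H_1(\partial N;\Z)\bigr)$, which is isotropic of rank $n$ in $H_1(\partial N;\Z)\cong\Z^{2n}$, the class $[\partial S']$ lies in the image of $H$, a union of finitely many rank-$n$ subgroups $L_1,\dots,L_r$. Projectivising each peripheral $\Z^2$-summand carries each $\mathbb{P}(L_k\otimes\R)\cong\mathbb{P}^{n-1}$ into $T^n=\prod_{j}\mathbb{P}H_1(\partial_j N;\R)$, so the multislope $(p_1/q_1,\dots,p_n/q_n)$ determined by $\partial S'$ on the cusps actually met by $S$ lies in a subset of $T^n$ of dimension at most $n-1$; the union of these subsets over all $k$ and all nonempty collections of cusps is exactly the piecewise smooth finite subcomplex $X(N)$ of codimension at least $1$. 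Hence if $(p_1/q_1,\dots,p_n/q_n)\notin X(N)$ there is no such $S$: every closed incompressible surface can be isotoped off $\bigsqcup_j\gamma_j$, so by \Cref{classification surfaces in tube} it meets each $\T_j$ only in boundary-parallel disks and $\partial$-parallel annuli, which can be pushed off $\T_j$, and thus $S$ is homotopic into $(M)_{\geq\epsilon}$. Applying this to each component of a taut embedded representative of an arbitrary class in $H_2(M;\Z)$ shows that the class has zero algebraic intersection with every $\gamma_j$, and since the intersection pairing $H_1(M;\Q)\times H_2(M;\Q)\to\Q$ is nondegenerate by Poincar\'e duality, $[\gamma_j]=0$ in $H_1(M;\Q)$; as $\epsilon$ was chosen so that these are the only geodesics of length $\leq\epsilon$, $M$ is $\epsilon$-homologically thick.

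The step I expect to be the main obstacle is the second paragraph: verifying that, after the meridian-disk isotopy and the minimisation of $|S\cap\bigcup_j\gamma_j|$, the surface $S'=S\cap N$ is genuinely incompressible \emph{and} $\partial$-incompressible with boundary precisely of the filling slopes. This is the standard Dehn-filling normal-form lemma, but here it has to be run hand in hand with the tube classification of the preliminary section --- the point being that the essential disks are the only components of $S\cap\T_j$ that can account for $S\cap\gamma_j$, so that a boundary compression of $S'$ really does decrease $|S\cap\bigcup_j\gamma_j|$. Everything downstream is a dimension count together with the already-recalled results of Hatcher and of Floyd and Oertel.
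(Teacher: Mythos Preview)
Your proposal is correct and follows essentially the same approach as the paper: its proof is the paragraph immediately preceding the lemma statement (Thurston's hyperbolic Dehn surgery theorem together with Hatcher/Floyd--Oertel applied to the properly embedded surface $S'=S\cap N$), and you have simply expanded that outline, including the normal-form step the paper only asserts. The one minor difference is that you route the reduction through a least-area representative and \Cref{classification surfaces in tube}, which works but is not needed here --- the paper's purely topological intersection-minimisation already forces $S\cap V_j$ to consist of meridian disks and yields the incompressibility and $\partial$-incompressibility of $S'$.
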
 
\begin{cor}
	Let $N$ be an orientable hyperbolic $3$-manifold with one cusp. All but finitely many Dehn fillings are generic. 
\end{cor}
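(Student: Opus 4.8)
The plan is simply to specialize the construction of $X(N)$ to the case $n=1$ and observe that it then becomes a finite set of slopes, after which the statement is immediate. When $N$ has a single cusp, the space of Dehn filling coefficients $T^n$ reduces to $T = \mathbb{P}^1(\R)$, the space of slopes on the boundary torus $\partial N$; this is where orientability of $N$ enters, since it guarantees that $\partial N$ is a torus and hence $H_1(\partial N) \cong \Z^2$ with the usual description of slopes. By the definition of a generic Dehn filling, the non-generic fillings are precisely those whose slope lies in the subcomplex $X(N) \subset T$, which by construction is a piecewise smooth \emph{finite} subcomplex of codimension at least $1$. Since $\dim T = 1$, codimension at least $1$ forces every cell of $X(N)$ to be $0$-dimensional, and finiteness then forces $X(N)$ to be a finite set of points.

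To make this concrete, recall how $X(N)$ arises: by Hatcher's theorem \cite{haBoundaryCurveIncom} (together with \cite{foIncompressibleBranchedSurfaces}), the slopes that can bound incompressible, boundary-incompressible surfaces in $N$ lie in the image of finitely many rank-$n$ subgroups of $H_1(\partial N) \cong \Z^{2n}$, and $X(N)$ is cut out from this image. For $n=1$ each such subgroup is rank $1$ inside $\Z^2$, i.e.\ generated by a single primitive class, and therefore projectivizes to a single point of $T$. Hence $X(N)$ is a finite union of points, say $X(N) = \{s_1,\dots,s_k\}$, and every Dehn filling slope $p/q \notin \{s_1,\dots,s_k\}$ is generic. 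Thus all but at most $k$ Dehn fillings of $N$ are generic. If one wishes to restrict further to hyperbolic fillings one can intersect with Thurston's finite exceptional set \cite{thurstonNotesgeometrytopology3manifolds}, but this is not needed for the statement as phrased.

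I do not anticipate any real obstacle: the only thing to check is that the finiteness and codimension conditions built into the definition of $X(N)$ genuinely collapse it to a finite set once the slope space is one-dimensional, which is exactly the content of the two paragraphs above and uses nothing about $N$ beyond orientability and the standard classification of slopes on a torus.
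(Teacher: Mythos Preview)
Your argument is correct and is precisely the intended one: the paper states this corollary without proof, treating it as immediate from the definition of $X(N)$, and you have spelled out exactly that immediate argument. For $n=1$ the slope space $T$ is one-dimensional, so a finite codimension-$\geq 1$ subcomplex is a finite set of points, and Hatcher's rank-$1$ subgroups each contribute a single slope---this is all that is needed.
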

A more complicated example with two cusps is the following. 
\begin{exa}
	All non exceptional Dehn fillings with slopes other than $\{(p/q, q/p)\}$ on \lfourteen are $\epsilon$ homologically thick. In fact, they are all homology spheres.   
\end{exa}
\begin{proof}
	We continue using the notations from \Cref{example LinkL14Thurston norm}. All homology here are integral coefficients. Let $\sigma: W \rightarrow W$ be the involution which interchanges the two boundary components $T_k$ and acts on $H_1(W)$ as identity. By the Lefschetz duality, $\sigma$ acts on $H_2(W, \partial W)$ as identity as well. Thus every element $S \in H_2(W, \partial W)$ must have a boundary component on both $T_k$. Since the two components of the link have linking number $1$, the meridian and longitude of $T_1$ are the longitude and meridian of $T_2$, respectively. The involution $\sigma$ maps $(p,q)$ corresponding to $\partial S \cap T_1$ to $(q, p)$ corresponding to $\partial S \cap T_2$. Thus all slopes which bound surfaces representing $H_2(W, \partial W)$ are of the form $(p/q, q/p)$. 
\end{proof}
Another natural class of \ehl manifolds is the following.
\begin{exa}
	Let $S$ be a closed orientable surface of genus $g\geq 2$, and $\gamma$ be a separating loop on $S$. Let $\phi \in Homeo(S)$ be a pseudo-Anosov such that $\phi_*:H_1(S) \rightarrow H_1(S)$ has irreducible characteristic polynomial over $\mathbb{Z}$ with a real root $>1$. Denote $D_{\gamma}$ a Dehn twist with respect to $\gamma$. Then $\phi \circ D^j_{\gamma}$ is a pseudo-Anosov and the injectivity of the mapping torus $M_j \coloneqq M_{\phi \circ D^j_{\gamma}} \rightarrow 0$ as $j \rightarrow \infty$, all with $b_1=1$ and $H_2$ generated by $S$. In particular, for all $j$ large enough, $M_j$ is $\epsilon$ homologically thick. 
\end{exa}
\begin{proof}
	\cite{ND3mfFibering, thurstonNormHomology3manifold} imply that if we choose a diffeomorphism which fixes no elements in $H_1(\Sigma; \Z)$, then the mapping torus has betti number $1$. The effect of composing Dehn twists $D^j_{\gamma}$ with $\phi$ corresponds to removing a tubular neighborhood of $\gamma$ and then fill with slopes $\gamma+j\cdot \text{meridian}$ ([\citenum{rdKnotsAndLinks}, 9.H]) which by Thurston's hyperbolic Dehn surgery theorem results in closed hyperbolic manifolds for all large $j$. 
\end{proof}

\begin{theorem}\label{generic L2 thurston norm linear}
		Let $V>0$. Then for a generic closed hyperbolic $3$-manifold $M$ whose volume $\leq V$, there exists a constant $c(V)$ such that 
	\begin{equation}
	\frac{\pi}{\sqrt{V}} \|\cdot\|_{Th}   \leq	\|\cdot\|_{L^2} \leq  c(V)\|\cdot\|_{Th}	\tn{ on } H^1(M;\R). 
	\end{equation}
\end{theorem}
\begin{proof}
The inequality is vacuous if $b_1(M)=0$. Appealing to \Cref{def generic manifold of volume less than V}, generic manifolds satisfy the condition of \Cref{linearThurL2Inequality}. Thus the conclusion follows. 

The left-hand side follows naturally from the left-hand side of \eqref{bdi} and $\frac{\pi}{\sqrt{V}}\leq \frac{\pi}{\sqrt{\vol(M)}}$. 
\end{proof}
\noindent This shows that most sequences of $M_j$ with $\vol(M_j)\leq V$ and $\inj(M_j)\rightarrow 0$, the $L^2$-norm and the Thurston norm are uniformly comparable instead of behaving like $\|\cdot\|_{L^2}/\|\cdot\|_{Th} \sim O(\sqrt{-\log \inj(M_j)})$ as in Brock-Dunfield examples \eqref{eq conjecture}.  
For a generic Dehn filling $M$, natural representatives of closed incompressible surfaces supported in the thick part $(M)_{\geq\epsilon'}$ are their least area representatives. 
\begin{cor}\label{generic Dehn fillings have surfaces supported in the thick part}
	Let $M_j$ be a generic sequence of closed hyperbolic $3$-manifolds such that $\vol(M_j) \leq V$. Let $S_j$ be a sequence of least area taut surfaces whose area tend to infinity. Then there exists $\epsilon' >0$ such that for all large $j$,  the surfaces $S_j \subset (M_j)_{>\epsilon'}$. 
\end{cor}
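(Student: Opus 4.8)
The plan is to show that, for all large $j$, the least area surface $S_j$ can meet a Margulis tube of $M_j$ only in ``shallow'' pieces lying within a uniformly bounded distance of the tube boundary, and then to push the Margulis constant past this uniform depth. Throughout, $\T_j$ denotes the $\epsilon$--Margulis tube ($\epsilon=0.29$) about a short geodesic $\gamma_j$ of $M_j$, and $R_j$ its radius.

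First I would describe $S_j\cap\T_j$ topologically. Since $M_j$ comes from a generic Dehn filling of $N$, \Cref{generic Dehn filling homologically thick} says $M_j$ is $\epsilon$ homologically thick and every closed incompressible surface is isotopic to one contained in $(M_j)_{\ge\epsilon}$; in particular $S_j$ is isotopic to a surface disjoint from all the $\T_j$, so the geometric intersection number of $S_j$ with each $\gamma_j$ is zero. Using that a least area surface can be taken to meet a geodesic minimally, $S_j\cap\gamma_j=\emptyset$, and hence by \Cref{classification surfaces in tube} the intersection $S_j\cap\T_j$ contains \emph{no} essential disk: it consists only of boundary--parallel disks and of incompressible, $\partial$--parallel (equivalently, invariant) annuli.

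Next I would establish a uniform depth bound. Because $M_j\to N$ geometrically, the boundary tori $\partial\T_j$ converge to the $\epsilon$--horotori of the finitely many cusps of $N$, so their intrinsic diameters are bounded by some $d=d(N)$. Each component $C$ of $S_j\cap\T_j$ is a stable minimal surface with $\partial C\subset\partial\T_j$, so Schoen's estimate together with the Gauss equation gives it uniform curvature bounds; then [\citenum{mrMinSurfaceShort}, Corollary~7], applied to the disk components exactly as in \Cref{uniform-depth-boundary-parallel-disks} and to the annulus components in the same way, yields a uniform $\br=\br(N)$ with $(S_j\cap\T_j)\cap N_{R_j-\br}(\gamma_j)=\emptyset$, i.e.\ $S_j\cap\T_j\subset B_{\br}(\partial\T_j)$. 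Finally I would shrink the Margulis constant: by [\citenum{fpsEffectiveDistanceNestedTubes}, Theorem~1.1] the radius $R_j(\mu)$ of the $\mu$--Margulis tube about $\gamma_j$ satisfies $R_j(\epsilon)-R_j(\mu)\ge\log(\epsilon/\mu)-C_0$ for a universal $C_0$ when $\mu\le\epsilon$, so fixing $\epsilon'\coloneqq\epsilon\,e^{-\br-C_0-1}$ gives $R_j(\epsilon)-R_j(\epsilon')>\br$ for all large $j$. For such $j$, $(M_j)_{<\epsilon'}$ is the disjoint union of the $\epsilon'$--tubes $N_{R_j(\epsilon')}(\gamma_j)$, each contained in $N_{R_j(\epsilon)-\br}(\gamma_j)$ and hence disjoint from $S_j\cap\T_j$; therefore $S_j\subset(M_j)_{>\epsilon'}$.

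The step I expect to be the main obstacle is the depth bound: \Cref{uniform-depth-boundary-parallel-disks} is stated only for boundary--parallel disks, so I would need to verify that the Schoen-type curvature estimate plus [\citenum{mrMinSurfaceShort}, Corollary~7] (or the concrete hyperbolic geometry of Farre--Pallete, [\citenum{fpMinimal-area-surfaces-fibered-3-manifolds}, Proposition~2.1]) also confines the incompressible $\partial$--parallel annulus components to a bounded collar of $\partial\T_j$, uniformly in $j$ and \emph{without} any area bound on $S_j$. Closely related, and also needing care, is the assertion in Step~1 that the least area representative meets the short geodesics in the minimal (zero) number of points, so that no deep essential disk survives; once these two local facts are in place, the rest is bookkeeping with the nested-tube radii.
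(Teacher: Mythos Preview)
Your overall architecture matches the paper's: rule out essential disks in $S_j\cap\T_j$, confine the remaining pieces to a bounded collar of $\partial\T_j$, and then shrink the Margulis constant. The point of divergence is exactly the obstacle you flag, and there the paper takes a different route than the one you propose.

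You try to obtain a uniform depth bound for the incompressible annulus components by the same mechanism as for boundary--parallel disks (Schoen's curvature estimate together with [\citenum{mrMinSurfaceShort}, Corollary~7]). This does not transfer ``in the same way''. The disk argument in \Cref{uniform depth of boundary-parallel disk} ultimately rests on the fact that $\partial B$ bounds a small disk on $\partial\T$, so the least area competitor caps $B$ off with a piece of $\partial\T$ of area controlled by the boundary diameter $d$. For an incompressible, $\partial$--parallel annulus the boundary curves are meridians; the minimal representatives are precisely the hyperbolic helicoids of Huang--Wang [\citenum{hwCSC}], and curvature bounds alone do not prevent such a helicoid from winding close to the core. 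The paper therefore does \emph{not} attempt to bound the depth of annuli. Instead it invokes Farre--Pallete: under geometric convergence $M_j\to N$ the twist--to--length ratio $\theta_0^2/\lambda$ of each tube is bounded below ([\citenum{fpMinimal-area-surfaces-fibered-3-manifolds}], proof of Theorem~2.5), and with this lower bound a least area surface cannot meet $\T_j$ in an incompressible annulus at all (the three paragraphs before [\citenum{fpMinimal-area-surfaces-fibered-3-manifolds}, Remark~2.4]). Once annuli are eliminated, only boundary--parallel disks survive and \Cref{uniform depth of boundary-parallel disk} finishes as you describe. You do mention Farre--Pallete parenthetically; that is in fact the main ingredient, not an alternative.

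On your second flagged point---that the least area $S_j$ actually has no essential disk components---the paper treats this as a consequence of the generic hypothesis (trivial geometric intersection with the filled tubes) together with \Cref{classification surfaces in tube}, rather than invoking any separate ``least area meets geodesics minimally'' principle. It is worth noting that this step is stated rather tersely in the paper as well; but in any case your concern here is legitimate, and the resolution does not rely on the sentence you wrote about least area surfaces realising minimal geometric intersection.
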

\begin{proof}
Recall that finitely many manifolds $N_1, \cdots, N_n$ generate all manifolds of volume $\leq V$ via Dehn filling. First suppose $M_j \rightarrow N_1$. Then any incompressible surface $S\subset M$ has a trivial geometric intersection with the filled tubes by the definition of generic manifold. By \Cref{classification surfaces in tube}, $S$ intersects a Margulis tube $\T$ of $M$ in terms of incompressible annuli and boundary-parallel disks. In the three paragraphs before [\citenum{fpMinimal-area-surfaces-fibered-3-manifolds}, Remark 2.4], Farre and Vargas Pallete prove that with a uniform lower bound on $twist^2 / length = \theta_0^2 / \lambda$ of $\T$, a least area surface $S$ cannot intersect $\T$ into an incompressible annulus. In the proof of [\citenum{fpMinimal-area-surfaces-fibered-3-manifolds}, Theorem 2.5], they show $M_j$ which converges geometrically satisfies this lower bound condition for large enough $j$. Thus $S_j \cap \T_j$ consists of boundary-parallel disks, which are uniformly close to $\partial T_j$ by \Cref{uniform-depth-boundary-parallel-disks}. Thus all surfaces $S_j \subset (M_j)_{\geq \epsilon_1}$. 

Let $\epsilon' =\min\{\epsilon_1, \cdots, \epsilon_n\}$, the conclusion follows. 
\end{proof}
\noindent \cite{fpMinimal-area-surfaces-fibered-3-manifolds} proves the same result assuming the areas of $S_j$ are uniformly bounded from above. 
We now provide another proof of the above Corollary, which views $S_j$ from infinity. For the simplicity of the notations we assume $N$ only has one cusp. We control the support of $S_j \subset M_j$ via the geometry and topology of $N$ and geometric convergence. By the definition of $M_j \rightarrow N$, we have
	\begin{enumerate}
		\item $\sigma_j >0$ such that $\sigma_j \rightarrow 0$,
		\item $k_j >1$ such that $k_j \rightarrow 1$,
		\item a map $\psi_j: N \rightarrow M_j$ which restricts to a $k_j$-biLipschitz diffeomorphism from a neighborhood of $N_{\geq \epsilon}$ into $M_j$, such that $\psi_j(N_{\geq \epsilon})$ is contained in the interior of $(M_j)_{\geq \epsilon}$ and $\psi_j(\partial M_{\geq \epsilon})$ is disjoint from a neighborhood of $\psi_j(M_{[\epsilon+\sigma_j, \infty)})$.		
	\end{enumerate}
 Ruberman in [\citenum{rdMutationVolumeKnot}, Section 3] establishes the existence of least area representatives of both closed nonperipheral and cusped surfaces. Huang and Wang in [\citenum{hwClosedMinCusped}, Section 2.2] define a constant $\tau_3$ so that the thick part $N_{\geq\tau_3}$ contains all least area representatives of nonperipheral surface [\citenum{hwClosedMinCusped}, Corollary 1.2] (see also [\citenum{chmrMinNoncompactExist}, Theorem B]). Replace $\epsilon$ by the smaller of $\epsilon$ and $\tau_3$, which we still denote by $\epsilon$. For any closed incompressible surface $S_j$, since a generic Dehn filling $\psi_j$ annihilates surfaces with boundary and surfaces homotopic to $\partial N$, there is an closed nonperipheral $S$ supported in $N_{\geq \epsilon}$, such that 
\begin{center}
	$S_j=\psi_{j*} S$.
\end{center}
	Let $B_j \subset S_j \cap \T_j$ be a disk and $A_j\subset S_j \cap \T_j$ be an annulus. If the radius of the largest geodesic disk in $B_j$ is $r$ with center $x$, then by the Gauss-Bonnet theorem (see [\citenum{cmCourseMinSurface}, Lemma 2.6]) the area of the disk is $>\pi r^2$ since the sectional curvature of a minimal surface in $M$ is $\leq -1$. The boundary $\partial B_j$ bounds a disk $D'_j \subset \partial \T_j$, whose area is bounded from above by the area of $\partial N$ up to a uniform constant. This directly bounds the radius of $B_j \subset S_j$ which also bounds the depth of $B_j$. This is essentially [\citenum{bwMinSurQuasiFuch}, Lemma 6], where he uses $\partial M_j$ as a barrier surface which comes from a classical exchange argument of Schoen-Yau. 
	
  Let $\phi_j \coloneqq \psi_j^{-1}$ defined in a small neighborhood of $(M_j)_{ \geq \epsilon}$ which restricts to a $J$-biLipschitz diffeomorphism $\phi_j: (M_j)_{\geq \epsilon} \rightarrow N_{\geq \epsilon /1.2}$, by [\citenum{fpsEffectiveBilipschitzBounds}, Theorem 1.2]. Thus $\phi_j'(A_j)$ is an annulus in the cusp $\mathcal{C}$ of $N$, supported between $T \times t_1$ and $T\times t_2$, where $T$ is a horotorus for $\mathcal{C}$, and $t_i= \log(iL_0)$, $L_0$ the constant in [\citenum{hwClosedMinCusped}, Section 2.1]. Now we have 
$$
	\area(A_j) \geq \frac{1}{k_j^2} \area(\phi_j'(A_j))	> \area(T \times t_1)
$$
as in the proof of [\citenum{hwClosedMinCusped}, Theorem 5.9]. As $j \rightarrow \infty$, 
$$
	\area(T \times t_1) \rightarrow \area(\partial M_j).
$$ 
We can then argue as in [\citenum{hwClosedMinCusped}, Corollary 5.7, Theorem 5.9] that there exists a $t_2$ in terms of the cusp shape of $N$ so that if the surface $S_j$ penetrates $\psi_j(T \times [t_1, t_2])$ then it is not least area. 
\begin{rem}
	The alternative proof provides the original motivation with which we think \Cref{generic Dehn fillings have surfaces supported in the thick part} should hold. It demonstrates that generically the least area closed incompressible surfaces in $M$ are $k_j$-biLipschitz to nonperipheral surfaces in $N$. 
\end{rem}
[\citenum{kahn2023geometrically}] studies the distribution of random surfaces whose genera tend to infinity in a closed hyperbolic manifold. If $M_j\rightarrow N$ with a sequence of surfaces $S_j \subset M_j$ such that $\tn{genus}(S_j) \rightarrow \infty$, a related question is the limit distribution of $S_j$. Generically, nonperipheral surfaces in $N$ control $S_j$ for large $j$. Thus we first need to study the distribution of nonperipheral surfaces whose genera tend to infinity.

  A random point of view of $3$-manifolds appears in the work of Dunfield and Thurston \cite{dtRandom3manifolds} in an attempt to test important open conjectures back then such as the virtual Haken and virtual betti number conjectures which were all resolved by Agol. See, for example, its connection with Heegaard splitting \cite{mjRandomHeegaardSplitting}, quantum invariants \cite{dwQuantumRandom3manifolds}, fibered commensurability \cite{mhFiberedCommensurabilityRandomMapping}, torsion growth \cite{bbghExponentialTorsionGrowth}, and volumes  \cite{vgVolumeRandom3-manifold} and the references therein for more related topics. In two dimensions, there is also wonderful ongoing work regarding topology, geometry and eigenvalues of random hyperbolic surfaces, see for example \cite{wxgafa, nwx, yyPrimeGeodesicTheorem, mmGrowthWeilPeterssonRandom} and the references therein for more related topics. \par 

\section{The geometry of best Lipschitz maps, geodesic laminations, and fibrations}\label{section lipschitz constant and entropy}
For this section, $M^n$ is a closed hyperbolic $n$-manifold and $f: M^n \rightarrow S^1=\R/\Z$ denotes a continuous circle-valued map. Let $[d\theta]\in H^1(S^1;\Z)$ be a generator and $\alpha\in H^1(M;\Z)$. We say $f \in \alpha$ if $[f^*d\theta]=\alpha$.
\subsection{The Lipschitz constants and best laminations}
 We first define the Lipschitz constant of a homotopy class. 
\begin{defi}\label{def Lipschitz constant of homotopy}
	For $f: M \rightarrow S^1$, the Lipschitz constant of an open subset $U$ is
	$$\lip_f(U) \coloneqq \inf\{L \in \R: d(f(x), f(y)) \leq Ld(x, y), \forall x, y\in U\},$$
	with the convention that $\inf \emptyset =\infty$. The Lipschitz constant  at $x$ is $$\lip_f(x)\coloneqq \lim\limits_{r\rightarrow 0}\lip_f(B_r(x)).$$ The Lipschitz constant  of $f$ is $$\lip_f\coloneqq \lip_f(M).$$
	 The Lipschitz constant of the homotopy class of $f$ is $$\lip_{[f]}= \min\limits_{h\sim f} \lip_h. $$
\end{defi}
\noindent If $f$ is $C^1$, then $\lip_f = \sup\limits_{x\in M} |df_x|$.
\begin{defi}[$\| [\alpha{]} \|_{\li}$]\label{def Lipschitz constant of cohomology}
	Let $\alpha \in H^1(M; \Z)$. The $L^{\infty}$-norm on the cohomology class $[\alpha]$ is  
	\[ \|[\alpha]\|_{\li} \coloneqq \min\limits_{\beta \sim \alpha \in H^1(M; \Z)} \max_{x\in M} |\beta_x|. \]
\end{defi}
\noindent  \dau identify $\|[\alpha]\|_{\li}$ with the Lipschitz constant $\lip_{[f]}$ in [\citenum{dubestLipLG}, Proposition 2.2]. If $f \in \alpha$ is infinity harmonic, [\citenum{dubestLipLG}, Theorem 2.4] shows that 
  \begin{equation}\label{map minimizing Lipschitz constant}
  	  \lip_f = \lip_{[f]}.
  \end{equation}
In general, a map $f$ minimizing the Lipschitz constant in its homotopy class (satisfying \eqref{map minimizing Lipschitz constant}) is called \textbf{best Lipschitz} \cite{dubestLipLG}. The existence of best Lipschitz maps follows from the compactness of $M$ and $S^1$ and the Arzel\`a-Ascoli theorem. \dau interpret the Lipschitz constant in terms of the length of geodesics and the Kronecker pairing. Let $\fancyS$ be the set of all free homotopy classes of simple closed curves in $M$ and $\gamma \in \fancyS$. Denote $\ell(\cdot)$ the length functional. A \textbf{geodesic lamination} is a closed subset of $M$ which is a disjoint union of simple and complete geodesics that is called its \textit{leaves}.
\begin{theorem}[\citenum{dubestLipLG}, Theorem 5.8, 5.2]\label{intersection over length sup = Lipschitz constant K'=K existence of msl}
	Define
	 \[K(\gamma)\coloneqq  \frac{|\int_{\gamma }\alpha|}{\ell(\gamma)}.\] 
	and
	\[K \coloneqq\sup\limits_{\gamma \in \mathcal{S}} K(\gamma).\] 
	 Then we have
	$$
		K= \lip_{[\alpha]}=\lip_{[f]}.
$$
  Moreover, there is a geodesic lamination $\Lambda(f)$ realizing the supreme: there exists $\gamma_n \in \fancyS$ such that 
  $$\gamma_n \rightarrow \Lambda(f) \tn{ and } K(\gamma_n) \rightarrow K.$$ The leaves $\gamma$ of $\Lambda(f)$ have a unit-speed parameterization such that $$\gamma'(t) = \frac{1}{K}\nabla f (\gamma(t)).$$ The restriction of $df$ to the tangent bundle of $\gamma$ is the uniform multiplication by the Lipschitz constant $K$: $|df|_{\Lambda(f)}=|\nabla f|_{\Lambda(f)}=K$. 
\end{theorem}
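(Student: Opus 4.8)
The plan is to prove the two equalities and the existence of $\Lambda(f)$ in three moves: an elementary upper bound $K\le\lip_{[f]}$, then a structural analysis of a best Lipschitz representative producing the lamination, and finally a closing-up argument turning leaves of the lamination into simple closed curves that force the reverse bound.

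\emph{The easy inequality.} Fix any continuous $f$ with $[f^*d\theta]=\alpha$ and any $\gamma\in\fancyS$. Lifting $f|_\gamma$ to a map $\R/\ell(\gamma)\Z\to\R/\Z$, its degree is exactly $\int_\gamma f^*d\theta=\int_\gamma\alpha$, so the total variation of the angular coordinate of $f$ along $\gamma$ is at least $|\int_\gamma\alpha|$, while it is at most $\lip_f\cdot\ell(\gamma)$. Hence $K(\gamma)\le\lip_f$ for every $\gamma$, so $K\le\lip_f$, and minimising over the homotopy class gives $K\le\lip_{[f]}$. Together with the identification $\lip_{[f]}=\lip_{[\alpha]}=\|[\alpha]\|_{\li}$ of [\citenum{dubestLipLG}, Proposition 2.2], it remains to establish $K\ge\lip_{[\alpha]}$.

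\emph{The stretch lamination.} Let $f$ be an infinity harmonic representative of $\alpha$, so by [\citenum{dubestLipLG}, Theorem 2.4] $f$ is best Lipschitz with $\lip_f=L:=\lip_{[\alpha]}$; pass to the $\Z$-cover $\wm$ with $L$-Lipschitz lift $\widetilde f:\wm\to\R$. On the maximal stretch locus $\{x:\lip_f(x)=L\}$ one shows there pass bi-infinite unit-speed curves $\sigma$ along which $\widetilde f$ increases at the exact rate $L$, i.e. $\widetilde f(\sigma(t))=\widetilde f(\sigma(0))+Lt$. Such a $\sigma$ is automatically a geodesic, since the $L$-Lipschitz bound gives $d_{\wm}(\sigma(0),\sigma(T))\ge(\widetilde f(\sigma(T))-\widetilde f(\sigma(0)))/L=T=\mathrm{length}(\sigma|_{[0,T]})$, forcing equality. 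Two such stretch geodesics cannot meet: at a common point the unit-vector derivative of $\widetilde f$ would equal $L$ in two distinct directions, impossible since $|\nabla\widetilde f|\le L$ with equality attained in only one direction. Since a Hausdorff limit of stretch geodesics is again a stretch geodesic, the closure $\Lambda(f)$ of their union is a disjoint union of complete simple geodesics, i.e. a geodesic lamination, and by construction each leaf is unit-speed parameterised with velocity along $\nabla f$ and with $df$ restricted to its tangent bundle equal to multiplication by $K=L$. Producing these stretch lines, and in particular knowing that the maximal stretch condition propagates so that they do not terminate, is the step that genuinely uses the infinity harmonic equation (via a viscosity/comparison argument along the gradient flow); I expect this to be the main obstacle, and it is where I would follow [\citenum{dubestLipLG}] most closely.

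\emph{Approximation by simple closed curves.} Pass to a minimal sublamination of $\Lambda(f)$ and fix a leaf $\gamma$. By recurrence there are times $T_n\to\infty$ with $\gamma(T_n)$ within $\delta_n\to0$ of $\gamma(0)$ and nearly parallel return direction; closing $\gamma|_{[0,T_n]}$ by a geodesic arc of length $O(\delta_n)$, chosen inside a small flow box so the result is embedded, yields a simple closed geodesic $\gamma_n$ with $\ell(\gamma_n)=T_n+O(1)$. Along the long segment $\widetilde f$ grows by $LT_n$, while it changes by a bounded amount on the closing arc, so the degree of $f|_{\gamma_n}$, which equals $\int_{\gamma_n}\alpha$, is $LT_n+O(1)$; hence $K(\gamma_n)=|\int_{\gamma_n}\alpha|/\ell(\gamma_n)\to L$ and $\gamma_n\to\Lambda(f)$. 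This gives $K\ge L$, so $K=\lip_{[\alpha]}=\lip_{[f]}$ and $\Lambda(f)$ is the lamination realising the supremum. As an alternative route to $K\ge\|[\alpha]\|_{\li}$ that bypasses the fine structure of $f$, Hahn--Banach applied to $\|\cdot\|_{\li}$ on closed one-forms modulo exact forms presents $\|[\alpha]\|_{\li}=\min_{\beta\sim\alpha}\|\beta\|_{\li}$ as a supremum of $|\langle T,\alpha\rangle|/\mathbb{M}(T)$ over closed $1$-currents $T$; replacing $T$ by a geodesic multicurve without increasing mass and then using a mediant inequality to pass to a single component recovers $K\ge\|[\alpha]\|_{\li}$. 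I would nonetheless run the geometric argument as the main line, since it also outputs $\Lambda(f)$ and its parameterisation.
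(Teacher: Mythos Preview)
The paper does not prove this theorem at all: it is quoted verbatim from Daskalopoulos--Uhlenbeck (the label reads ``[\citenum{dubestLipLG}, Theorem 5.8, 5.2]'') and used as a black box for the subsequent results on level sets, fiber translation length, and the entropy inequality. So there is no ``paper's own proof'' to compare your proposal against.

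That said, your three-step outline --- the easy bound $K\le\lip_{[f]}$, extraction of a geodesic lamination from the maximal stretch locus of an infinity harmonic representative, then a closing-lemma argument to manufacture $\gamma_n\in\fancyS$ with $K(\gamma_n)\to L$ --- is the standard shape of the Daskalopoulos--Uhlenbeck argument, and you are candid that the genuinely delicate step (propagation of the stretch condition so that stretch lines are bi-infinite) rests on the comparison-with-cones machinery for infinity harmonic maps, which you would import from their paper rather than redo. Two small remarks on the closing step: first, $\int_{\gamma_n}\alpha$ is an integer, so the correct statement is that this integer is $LT_n+O(1)$, which still gives $K(\gamma_n)\to L$; second, simplicity of $\gamma_n$ is not automatic from an Anosov-style closing lemma in dimension $\ge 3$, but since you close inside a flow box of the lamination the local product structure does guarantee embeddedness. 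Your Hahn--Banach alternative is a nice observation but, as you note, does not by itself produce $\Lambda(f)$ or its integral-curve description.
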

On a surface, a geodesic lamination can have uncountably many leaves. One way to interpret $ \int_{{\Lambda(f)}} \alpha/\ell(\Lambda(f))=K(\Lambda(f))$ is that for any geodesic segment $\gamma \subset \Lambda(f)$, 
\begin{equation}\label{eq interpretation intersection and length of a lamination}
	\frac{|\int_{\gamma} \alpha|}{ \ell(\gamma)}=K.
\end{equation}
A circle-valued map $f$ is called \textbf{tight} \cite{flmMinimizingLaminationsRegularCover} if its Lipschitz constant is equal to $K$. The definition of tight maps and best Lipschitz maps are equivalent in view of \Cref{intersection over length sup = Lipschitz constant K'=K existence of msl}. [\citenum{flmMinimizingLaminationsRegularCover}, Proposition 3.1] shows that a subset of the maximal stretch locus whose pullback to the frame bundle is invariant under the geodesic flow is a geodesic lamination, which we will call \textbf{the \msl of $f$}. 
 \begin{defi}[Maximal stretch locus and maximal stretch lamination]\label{maximal stretch locus and lamination}
 	Let $f: M\rightarrow S^1$ be a map. The maximal stretch locus of $f$ is $\{x\in M: \lip_f(x)= \lip_{f}\}$. When $M$ is a hyperbolic, for $f$ a tight map, a \textbf{\msl}$\Lambda=\Lambda(f)$ is the subset of the maximal stretch locus of $f$ which is a geodesic lamination. 
 \end{defi}
 \dau use comparison with cones for an infinity harmonic map to show its maximal stretch locus forms a geodesic lamination. \cite{fsExistenceCriticalSubsolutions} shows that there exists a $C^1$ tight map in the homotopy class of $f$. Rudd in [\citenum{rcStretchLamination}, Proposition 4.3] proves all maximal stretch
laminations are orientable. For other works on maximal stretch laminations, see \cite{gkMaximallyStretchedLaminations,daskalopoulos2022analytic,pan2022ray} and the references therein. For definitions of the topological entropy, see [\citenum{flp}, Chapter 10, 14].
A simple observation is the following.
\begin{lem}
	The cohomology $[\alpha]$ is trivial if and only if the Lipschitz constant $\lip_{[\alpha]}$ is $0$.
\end{lem}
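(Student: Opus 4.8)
The plan is to prove the two implications separately, leaning throughout on the dictionary $\lip_{[\alpha]} = \|[\alpha]\|_{\li} = K$ recorded in \Cref{intersection over length sup = Lipschitz constant K'=K existence of msl} and the surrounding discussion.

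For the forward implication, if $[\alpha]=0$ then the zero $1$-form is a closed representative of $[\alpha]$, so $\|[\alpha]\|_{\li} = \min_{\beta \sim \alpha}\max_{x}|\beta_x| \le \max_x |0| = 0$, whence $\lip_{[\alpha]} = 0$. Equivalently, one can note that the trivial cohomology class is realized by $f^*d\theta$ for $f$ a constant map, and a constant map has $\lip_f = 0$, so $\lip_{[f]} = 0 = \lip_{[\alpha]}$. Either phrasing is immediate.

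For the reverse implication, suppose $\lip_{[\alpha]}=0$. By \Cref{intersection over length sup = Lipschitz constant K'=K existence of msl} this says $K = \sup_{\gamma\in\fancyS} K(\gamma) = 0$, hence $K(\gamma) = |\int_\gamma\alpha|/\ell(\gamma) = 0$ and so $\int_\gamma\alpha = 0$ for every simple closed curve $\gamma$ in $M$. The remaining point is that the homology classes $[\gamma]\in H_1(M;\Z)$ of simple closed curves generate $H_1(M;\Z)$: in dimension $\ge 3$ every integral $1$-cycle is homologous to an embedded loop by general position, and in dimension $2$ the standard generators of $H_1$ are already simple closed curves. Since $H^1(M;\Z)$ is free and pairs perfectly with $H_1(M;\Z)$ modulo torsion, $\alpha$ annihilating a generating set forces $[\alpha]=0$. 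Alternatively, if one prefers to avoid invoking which classes carry simple closed curves, one can take a sequence of almost-minimizing representatives $\beta_n\sim\alpha$ with $\max_x|(\beta_n)_x|\to 0$, observe $\int_\gamma\alpha = \int_\gamma\beta_n \to 0$ for every loop $\gamma$, and conclude $[\alpha]=0$ in $H^1(M;\R)$, hence in $H^1(M;\Z)$; if the minimum defining $\|[\alpha]\|_{\li}$ is attained (as it is by the infinity-harmonic representative) this shortcuts further: the minimizing $\beta$ is identically $0$.

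The whole argument is formal and I do not anticipate a genuine obstacle; the only step deserving a sentence of care is the passage from "pairs trivially with all simple closed curves" to "is the zero cohomology class," which is handled by either of the two elementary observations above.
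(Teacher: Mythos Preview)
Your proof is correct; the forward direction matches the paper's, but your reverse direction takes a different route. The paper argues directly from the definition: if $\lip_{[\alpha]}=0$, then for each $\epsilon>0$ there is a representative $f:M\to S^1$ whose image has length at most $\epsilon\cdot\mathrm{diameter}(M)$; once this is smaller than the length of $S^1$, the image lies in a proper arc and $f$ is nullhomotopic, so $[\alpha]=0$. This is entirely self-contained and does not rely on \Cref{intersection over length sup = Lipschitz constant K'=K existence of msl}. Your argument instead passes through the identity $K=\lip_{[\alpha]}$ to obtain $\int_\gamma\alpha=0$ for all simple closed curves and then pairs with homology (or, in your alternative, appeals to the existence of an $\li$-minimizing representative). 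Both are valid; the paper's approach is slightly more elementary in that it avoids invoking the Daskalopoulos--Uhlenbeck theorem and the generation of $H_1$ by simple loops, while yours has the virtue of showing directly that $\alpha$ kills every cycle, which is arguably the more natural cohomological statement.
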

\begin{proof}
	$\Rightarrow$. 
	If $\alpha=0$, then $f$ is homotopic to a constant map $f_c$ whose Lipschitz constant is $0$. \par 
	$\Leftarrow$ 
	If $\lip_{[\alpha]}=0$, then by the definition of $\lip_{[\alpha]}$, for any $\epsilon>0$, there exists an $f:M \rightarrow S^1$ such that the length of the image $f(M)$ is $\leq \epsilon \cdot \tn{diameter}(M)$. For $\epsilon$ small enough, the circle-valued map $f$ is homotopic to a constant map. 
\end{proof} 
	\begin{lem}\label{level set of tight cut lamination equally}
	For $n\geq 2$, let $f: M^n \rightarrow S^1$ be a tight map and $\Lambda(f)$ be its maximal stretch lamination. Every geodesic segment of $\Lambda(f)- f^{-1}(\theta)$ has a length of $1/K$. 	
	\end{lem}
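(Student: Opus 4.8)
The plan is to transfer the problem to the \zcover $\wm$, where the maximal stretch lamination becomes a disjoint union of properly embedded lines along which the lifted map $\widetilde f$ is an \emph{affine} function of arc length, which pins down the spacing of consecutive intersections with a level set. Concretely, first I would fix a leaf $\gamma$ of $\Lambda(f)$ and parametrize it by arc length, $\gamma\colon\R\to M$. By \Cref{intersection over length sup = Lipschitz constant K'=K existence of msl} the restriction of $df$ to the tangent line of $\gamma$ is multiplication by $K$; since $\Lambda(f)$ is orientable by [\citenum{rcStretchLamination}, Proposition 4.3] and its orientation may be chosen compatibly with $\nabla f$, this gives $\frac{d}{dt}(f\circ\gamma)(t)=K$ for every $t$. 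In particular $f\circ\gamma$ is strictly monotone, hence nowhere locally constant, so $\gamma$ meets the level set $f^{-1}(\theta)$ in a discrete set of parameters $\cdots<t_i<t_{i+1}<\cdots$, and the geodesic segments of $\Lambda(f)-f^{-1}(\theta)$ carried by $\gamma$ are precisely the arcs $\gamma((t_i,t_{i+1}))$.

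Next I would lift: let $\widetilde f\colon\wm\to\R$ be the lift of $f$ and $\widetilde\gamma\colon\R\to\wm$ the lift of $\gamma$. Then $\widetilde f\circ\widetilde\gamma$ is a lift of $f\circ\gamma\colon\R\to S^1=\R/\Z$, so $\widetilde f(\widetilde\gamma(t))=Kt+c$ for a constant $c$. The parameters at which $\gamma$ meets $f^{-1}(\theta)$ are exactly those $t$ with $Kt+c\in\theta+\Z$, and consecutive ones differ by $1/K$; since $\gamma$ is unit speed, $\operatorname{length}(\gamma((t_i,t_{i+1})))=t_{i+1}-t_i=1/K$. Running the same computation directly in $\wm$ shows every component of $\widetilde{\Lambda(f)}\setminus\pi^{-1}(f^{-1}(\theta))$ is a geodesic arc of length $1/K$, which is the cleanest form of the statement; a closed leaf $\gamma$ (should one occur) is subsumed, since then $\int_\gamma\alpha=K\,\ell(\gamma)$ is a positive integer, so $\gamma$ does cross $f^{-1}(\theta)$ and is cut into $\int_\gamma\alpha$ arcs each of length $1/K$.

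The step needing the most care is the very first one: $f$ is a priori only Lipschitz, so ``$\frac{d}{dt}(f\circ\gamma)\equiv K$'' must be read as the behaviour of $f$ along leaves of $\Lambda(f)$ furnished by \Cref{intersection over length sup = Lipschitz constant K'=K existence of msl} (where it is in fact affine), or else one first replaces $f$ by a $C^1$ tight map in its homotopy class, available by [\citenum{fsExistenceCriticalSubsolutions}], without changing $\Lambda(f)$. I would also note that one may assume $\theta$ is a regular value of $f$ (generic by Sard), so that $f^{-1}(\theta)$ is an embedded hypersurface met transversally by $\Lambda(f)$ because $\nabla f\neq 0$ there, which makes ``geodesic segment of $\Lambda(f)-f^{-1}(\theta)$'' unambiguous. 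Beyond this regularity bookkeeping there is no real obstacle.
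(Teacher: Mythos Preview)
Your argument is correct and is essentially the paper's proof: lift to the \zcover, use that $\widetilde f\circ\widetilde\gamma$ is affine with slope $K$ along any leaf (this is exactly the content of \Cref{intersection over length sup = Lipschitz constant K'=K existence of msl}), and read off that consecutive crossings of a fixed level are spaced $1/K$ apart. Two small cautions about your closing paragraph, neither of which affects the main line: replacing $f$ by another tight map need not preserve $\Lambda(f)$ (only the intersection $\Lambda(\alpha)=\bigcap_f\Lambda(f)$ is canonical), and Sard for a $C^1$ map $M^n\to S^1$ with $n\ge 2$ does not give generic regular values---but the paper simply does not require $f^{-1}(\theta)$ to be a submanifold, only a compact set, so no such regularity discussion is needed.
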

	\begin{proof}
	Let $S = f^{-1}(\theta) \subset M$ be a (probably disconnected) level set (we do not have enough regularity to argue $S$ is a submanifold, but $S$ being compact suffices). Let $\gamma_0 \subset \Lambda(f)-S$ be a connected geodesic segment which inherits an orientation of $\nabla f$, from $p_0 \in  \gamma_0\cap S$ to $p_1 \in \gamma_0 \cap S$. The two endpoints $p_0, p_1$ can belong to two adjacent components of $S$. We have 
	$$\int_{{\gamma}_0} d{f} = {f}({p}_1) - {f}({p}_0)  =1$$ and 
	$$	\int_{{\gamma}_0} d{f} = \int_{0}^{\ell({\gamma}_0)} {\gamma}'(t)\cdot \nabla {f} \di t = \int_{0}^{\ell({\gamma}_0)} |\nabla {f}| \di t = \ell({\gamma}_0) \cdot K,$$
	where we have used the fact that any leaf of \msl is parallel to $\nabla f$ whose norm  $|\nabla {f}|=K$. 
	This implies 
	$$\ell({\gamma}_0) =\frac{1}{K}.$$ 
	\end{proof}
	The intersection of the maximal stretch laminations $\Lambda(f)$ of all tight maps $f$ for a cohomology $\alpha$ is nonempty by [\citenum{gkMaximallyStretchedLaminations}, Theorem 1.3], and denoted by
	$$\Lambda(\alpha)= \bigcap\limits_{f \tn{ tight}}\Lambda(f).$$
		Let $\widetilde{M}$ be the $\Z$-cover of $M$ such that the following diagram commutes, and $\tau$ is a deck transformation on $\wm$ such that $M=\wm/\langle \tau \rangle$ 	
	\[
	\begin{tikzcd}
		\wm \arrow{r}{\widetilde{f}} \arrow{d}{\pi}  & \R \arrow{d}{\pi} \\
		M \arrow{r}{f} & S^1.
	\end{tikzcd}
	\]
	Let $\gamma \subset \Lambda(f)$ be a leaf and $\widetilde{\gamma} \subset \wm$ be its cover. By \Cref{intersection over length sup = Lipschitz constant K'=K existence of msl}, the restriction of $\tilde{f}:\widetilde{\gamma} \rightarrow \R$ is a uniform multiplication by the Lipschitz constant $K$. Moreover, $\widetilde{\gamma}'$ is a constant multiple of $\nabla f$: $\widetilde{\gamma}'(t) =\frac{1}{K}\nabla \tilde{f}(\widetilde{\gamma}(t))$. 
	In the proof of [\citenum{flmMinimizingLaminationsRegularCover}, Lemma 3.2], \flm describe $\tilde{f}$ as a ``ruler for coarsely measuring distances in $\wm$". Another useful point of view is that for $\tilde{p}, \tilde{q} =\tau(\tilde{p}) \in \wm$, or $\tilde{p}, \tilde{q} \in \widetilde{\Lambda(\alpha)}$, $\tilde{f}(\tilde{q})- \tilde{f}(\tilde{p})$ measures the ``algebraic intersection" of a segment from $\tilde{p}$ to $ \tilde{q}$ to a submanifold dual to $\alpha$. 
	\begin{defi}
	Let $\tilde{\eta} \subset \wm$ be a segment from $\tilde{p}$ to $ \tilde{q}$ such that $\pi(\tilde{\eta})$ is a closed loop or $\tilde{\eta} \subset \Lambda(\alpha)$. We define the pairing between $\tilde{\eta}$ and $\alpha$ as 
	$$\tilde{f}(\tilde{q})- \tilde{f}(\tilde{p}).$$
	\end{defi}
\noindent	When $\pi(\tilde{\eta})$ is a closed loop, $\langle \pi(\tilde{\eta}), \alpha \rangle =\int_{\pi(\tilde{\eta})}df=\tilde{f}(\tilde{q})- \tilde{f}(\tilde{p})$ for any $f$ representing $\alpha$.  \par 
	Recall that the definition of the Lipschitz constant of a homotopy class comes from min-max:
	$$ \lip_{[f]}= \min\limits_{h\in [f]} \max\limits_{x\in M} \lip_h(x).$$ 
	The max implies that $1/K$ is the distance (the smallest length of a geodesic segment) between a level set $\widetilde{S}_0$ and its translate $\tau \cdot \widetilde{S}_0$ of a tight maps (this holds even if $\widetilde{S}_0$ has several components), while the min implies that within the homotopy class this ``fiber translation length" is maximal for a tight map. We now formalize this idea. 
 
	\begin{defi}\label{fiber translation length of monodromy phi}
Let $\alpha\in H^1(M;\Z)$ and $f\in \alpha$. We define the fiber translation length of a level set $\stheta=f^{-1}(\theta)$ as the distance between $\widetilde{\stheta}$ and $\tau \cdot \widetilde{\stheta}$ in the $\Z$-cover $\wm$:
		$$
		d(\stheta)\coloneqq	d_{\wm}(\widetilde{\stheta}, \tau \cdot \widetilde{\stheta}). 
	$$
		The \textbf{fiber translation length} of $f$ is defined by
		$$ d(f)\coloneqq \sup\limits_{\theta\in S^1} d(\stheta).$$
		The \textbf{fiber translation length} of $\alpha$ is
		\[ d_{\alpha} \coloneqq \sup_{\substack{f\in \alpha }} d(f). \]
	\end{defi}
\noindent The fiber translation length $\da$ is equal to 
	\[	 \sup_{\substack{S\subset M \tn{ isotopic} \\ \tn{to a fiber}}}	d_{\wm}(\widetilde{S}, \tau \cdot \widetilde{S}).\]

\fibertranslationLengthequalReciprocalK*
	\begin{proof}
		Let $f\in \alpha$ be tight. We first show that any geodesic segment $\widetilde{\eta}\subset \Lambda(f)$ between $\widetilde{S}_0$ and $\widetilde{S}_1 = \tau \cdot \widetilde{S}_0$ has a length larger than or equal to $1/K$, with equality only if it belongs to the maximal stretch lamination. As in the proof of \Cref{level set of tight cut lamination equally}, 
		$$
				1=\int_{\widetilde{\eta}} d\widetilde{f} = \int_{0}^{\ell(\widetilde{\eta})} \widetilde{\eta}'(t)\cdot \nabla \widetilde{f}(\widetilde{\eta}(t)) \di t \leq \int_{0}^{\ell(\widetilde{\eta})} |\nabla \widetilde{f}(\widetilde{\eta}(t))| \di t \leq \ell(\widetilde{\eta}) \cdot K,
		$$
		where the inequality is equal if $\widetilde{f}$ restricted to $\widetilde{\eta}$ has the largest Lipschitz constant $K$: $|\nabla \widetilde{f}(\widetilde{\eta}(t))|=K$, which implies that $\widetilde{\eta}$ is a subset of the maximal stretch locus. Thus any geodesic segment between $\widetilde{S}_0$ and its translate other than \msl has length strictly larger than $1/K$. \par
		For a general $f\in \alpha$, we will show $d(f) \leq \frac{1}{K}$. 
		Let $\epsilon>0$ be small and $\gamma_n$ a closed geodesic such that $K-\epsilon < K(\gamma_n) $ and $\alpha(\gamma_n)=n$. In $\wm$, a lift $\widetilde{S}$ and its translate $\tau^n(\widetilde{S})$ bound a fundamental domain $D_n$ for the $n$-fold cover of $M$. Since $\tau$ maps one fundamental domain of $M$ to another as an isometry, 
		$$d(\tau^{i}\cdot \widetilde{S}, \tau^{i+1}\cdot \widetilde{S})= d(\widetilde{S}, \tau\cdot \widetilde{S}). $$
		 Denote the intersection of the lift $\widetilde{\gamma_n}$ and $\tau^i \cdot \widetilde{S}$ by $p_i$. We have 
		\begin{align*}
			n\cdot d(\widetilde{S}, \tau \cdot \widetilde{S}) &= \sum\limits_{ i=0}^{n-1} d(\tau^{i}\cdot \widetilde{S},   \tau^{i+1}\cdot \widetilde{S}) \\
			&\leq \sum\limits_{ i=0}^{n-1} d(p_i, p_{i+1}) = \ell(\gamma_n),
		\end{align*}
		which implies \[ d(\widetilde{S}, \tau \cdot \widetilde{S}) \leq \frac{ \ell(\gamma_n)}{n}=\frac{ \ell(\gamma_n)}{\alpha(\gamma_n)} =\frac{1}{K(\gamma_n)}\leq \frac{1}{K-\epsilon}.\]
		Since $\epsilon$ is arbitrary, we have 
		\[ d(\widetilde{S}, \tau \cdot \widetilde{S}) \leq \frac{1}{K}\]
	\end{proof}

	Let $\pi_n: S^1 \rightarrow S^1$ be the degree-$n$ covering map. The behavior of the Lipschitz constant under covering maps is used to demonstrate the sharpness of \Cref{inequality Lipschitz constant entropy} in \Cref{lem product Lipschitz entropy}. 
\begin{lem}\label{lipschitz constant under covering}
Let $f$ be continuous and $M_n$ the degree-$n$ cover of $M$. Assume the maps $f_n \coloneqq f\circ\pi_n$ and $f_{1/n}$ make the following diagrams commute
\begin{figure}[ht]
	\centering
\subfloat[$f_n$]{
	\includegraphics[scale=0.5]{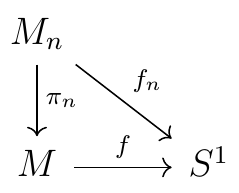} 
	\label{fovern}
} \hspace{0.5in}
\subfloat[$f_{1/n}$]{
		\includegraphics[scale=0.5]{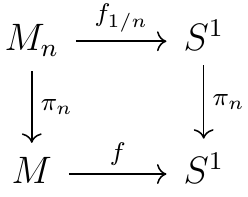}.
		\label{fn}
}
\end{figure}

Then $$
	\lip_{f_n}=\lip_{f} \tn{ and } \lip_{f_{1/n}}=\frac{\lip_{f}}{n}.
$$
\end{lem}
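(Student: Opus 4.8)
The plan is to reduce both identities to the purely local behaviour of $f$ together with the standard fact that on a compact Riemannian manifold (indeed on any length space) the global Lipschitz constant equals the supremum of the pointwise Lipschitz constants, $\lip_g=\sup_{x}\lip_g(x)$ for a continuous $g$. Granting this, each claim becomes a statement about how $f$ transforms under the relevant covering map, and each such map is a \emph{local} isometry or a \emph{local} homothety, so the pointwise constants transform in a controlled way and one takes suprema.

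For the source cover $\pi_n\colon M_n\to M$, note that $\pi_n$ is a Riemannian covering, hence $1$-Lipschitz and a local isometry. From $1$-Lipschitzness and $f_n=f\circ\pi_n$ one gets $d(f_n(x),f_n(y))\le \lip_f\,d(\pi_n x,\pi_n y)\le \lip_f\,d(x,y)$, so $\lip_{f_n}\le\lip_f$. For the reverse inequality, for each $x\in M$ choose a ball $B_r(\tilde x)\subset M_n$ mapped isometrically by $\pi_n$ onto $B_r(x)$; since $f_n$ agrees with $f$ under this identification, $\lip_{f_n}(\tilde x)=\lip_f(x)$, and therefore $\lip_{f_n}\ge\sup_{\tilde x}\lip_{f_n}(\tilde x)=\sup_x\lip_f(x)=\lip_f$.

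For the target cover $\pi_n\colon S^1\to S^1$ with $S^1=\R/\Z$, the degree-$n$ self-cover is $\theta\mapsto n\theta$, a local homothety of ratio $n$: one has $d_{S^1}(n\theta_1,n\theta_2)=n\,d_{S^1}(\theta_1,\theta_2)$ whenever $d_{S^1}(\theta_1,\theta_2)<\tfrac1{2n}$. Because $f=\pi_n\circ f_{1/n}$ and $f_{1/n}$ is continuous, for $x,y$ in a sufficiently small ball $B_r(p)\subset M$ the images $f_{1/n}(x)$ and $f_{1/n}(y)$ lie in such a neighbourhood, so $d(f(x),f(y))=n\,d(f_{1/n}(x),f_{1/n}(y))$; letting $r\to0$ gives $\lip_f(p)=n\,\lip_{f_{1/n}}(p)$ for every $p$. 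Taking the supremum over $p$ and using $\lip_g=\sup_p\lip_g(p)$ once more yields $\lip_f=n\,\lip_{f_{1/n}}$, i.e. $\lip_{f_{1/n}}=\lip_f/n$.

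The only genuinely delicate point, and the one I would write out in full, is the passage $\lip_g=\sup_x\lip_g(x)$ from pointwise to global Lipschitz constants on $M$ and on $M_n$; it is standard and follows by subdividing a path between two points whose length is close to their distance and summing the local estimates, but it is the step that actually uses the length-space structure. Everything else follows formally. One should also remark that the hypothesis that the diagrams commute is exactly what guarantees $f_{1/n}$ exists (equivalently, that $[f]\in H^1(M;\Z)$ is divisible by $n$), so no further assumption is needed.
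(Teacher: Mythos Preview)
Your proposal is correct and follows essentially the same route as the paper: both arguments compute the pointwise Lipschitz constants using that the source cover $\pi_n$ is a local isometry and the target cover $\pi_n\colon S^1\to S^1$ is a local homothety of ratio $n$, and then pass to the global constant by taking suprema. You are in fact more careful than the paper in isolating the one nontrivial ingredient, namely $\lip_g=\sup_x\lip_g(x)$ on a length space; note only that in the paper's setup $f_{1/n}$ is defined on $M_n$ (lifting $f_n$ through the target cover) rather than on $M$, so your divisibility remark is unnecessary there, but the computation is identical once one composes with the first equality $\lip_{f_n}=\lip_f$.
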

\begin{proof}
	The map $\pi_n$ is a local isometry. Take $y\in \pi_n^{-1}(x)$. Then with the definition of the Lipschitz constant as a ratio of distances in \Cref{def Lipschitz constant of homotopy},  
	$$ \lip_{f_n}(y) = \lim\limits_{r\rightarrow 0} \lip_{f_n}(B_r(y))= \lim\limits_{r\rightarrow 0} \lip_{f}(B_r(x))=\lip_f(x).$$
	Thus $\pi_n$ pullbacks the \msl of $f$ to the \msl of $f_n$, with the same Lipschitz constant. This finishes the proof of the first equality. \par  
	The differential $d\pi_n$ of the map $\pi_n: S^1 \rightarrow S^1$ is given by the multiplication by $n$: $d \pi_n (v) =nv$. Note that $f_n=\pi_n \circ f_{1/n}$.  It implies that for $y_1, y_2 \in M_n$ very close, 
	$$d_{S^1}(f_n(y_1), f_n(y_2)) = d_{S^1}(\pi_n\circ f_{1/n}(y_1), \pi_n\circ f_{1/n}(y_2))=n\cdot d_{S^1}(f_{1/n}(y_1), f_{1/n}(y_2)) .$$
	From $d_{S^1}(f_{1/n}(y_1), f_{1/n}(y_2))=\frac{1}{n}d_{S^1}(f_n(y_1), f_n(y_2))$, we have 
	$$ \lip_{f_{1/n}}(y) = \lim\limits_{r\rightarrow 0} \lip_{f_{1/n}}(B_r(y))= \frac{1}{n}\lim\limits_{r\rightarrow 0} \lip_{f_n}(B_r(y))=\frac{1}{n}\lip_{f_n}(y)=\frac{1}{n}\lip_{f}(x).$$

\end{proof}
\noindent Taking the degree-$n$ cover corresponds to truncating $\widetilde{\Lambda(f)}$ by the $n$-fold fundamental domain of $M$ in $\wm$. \Cref{lipschitz constant under covering} is known to Farre, Landesberg, and Minsky \cite{flmMinimizingLaminationsRegularCover} since they describe the \msl as the fastest routes traversing the \zcover and taking a finite cover does not change the associated \zcover. \par
 A chain recurrent sublamination $\Lambda^{cr}(\alpha) \subset \Lambda(\alpha)$ that is maximal with respect to inclusion is called the cohomology stretch lamination by Rudd \cite{rcStretchLamination}. When $\dim M=2$, a transverse measure $m$ with support on a maximal measured sublamination $\Lambda_m(\alpha)\subset \Lambda^{cr}(\alpha)$ exists by [\citenum{dubestLipLG}, Corollary 6.8]. The lamination $\Lambda(\alpha)$ is not only oriented, it has an orientation so that a submanifold $S$ dual to $\alpha$ exists with all intersections $\Lambda(\alpha) \cap S$ positive. For hyperbolic surfaces, let $\langle\cdot,\cdot\rangle$ denote the intersection forms for geodesic currents. 
\begin{prop}\label{algebraic intersection=geometric intersection}
Let $M^n$ ($n\geq 2$) be a closed hyperbolic manifold and $\alpha\in H^1(M;\Z)$.  There exists an oriented submanifold $S$ dual to $\alpha$ and an orientation on $\Lambda(\alpha)$ such that all intersections $\Lambda(\alpha) \cap S$ have positive signs. For hyperbolic surfaces, a multi-curve $\beta_{\alpha}$ dual to $\alpha$ exists so that 
	\begin{equation}\label{eq algebraic intersection=geometric intersection}
	\langle \Lambda_m(\alpha), \beta_{\alpha}\rangle=K\ell(\Lambda_m(\alpha)).
 	\end{equation}
\end{prop}
\begin{proof}
	\cite{fsExistenceCriticalSubsolutions} shows that there exists a $C^1$ tight map in the homotopy class of $f$. Assume $f$ is $C^1$ tight and we will show the \msl $\Lambda(f)$ of $f$ satisfies \eqref{eq algebraic intersection=geometric intersection}. Let $\gamma \subset \Lambda(f)$ be a geodesic leaf parameterized such that $\gamma'(t) = \frac{1}{K}\nabla f(\gamma(t))$. Since both $M$ and $S^1$ are compact, by the classical smooth approximation theorem, for any $\epsilon>0$ there exists a smooth $h: M\rightarrow S^1$ homotopic to $f$ such that $\|h-f\|_{C^1} < \epsilon$, which means 
	\[\|h(x)-f(x)\|+\|\nabla h(x)- \nabla f(x)\|<\epsilon.\]
	By Sard's theorem, regular values of $h$ are of full measure on $S^1$. By the \pc duality, the preimage $S$ of any regular value  $\theta$ of $h$ is dual to $\alpha$. By the compactness of $M$, $S$ is compact. For the simplicity of notations, assume $S$ is a component which $\gamma$ intersects at a point $p$. Without loss of generality, a frame $F_p$ for $T_pS$ and $\nabla h$ are positively oriented (and orthogonal). Since $ \|\nabla h(x)- \nabla f(x)\|<\epsilon$ for a small $\epsilon$, we have $\{F_p,\nabla f(p)\}=\{F_p, \gamma'(p)\}$ is also positively oriented (and almost orthogonal). This applies to all intersections of $\gamma \cap S$. Thus we have proven for every component $S$ of $h^{-1}(\theta)$, the intersection $\gamma \cap S$ has a consistent sign. This argument generalizes to other leaves of $\Lambda(f)$, which shows that $\Lambda(f)$ at every point of $h^{-1}(\theta)\cap \Lambda(f)$ can be consistently oriented as $\nabla h$ so that all algebraic intersections are positive. \par
	Now suppose $M$ is a hyperbolic surface. We first assume the measured lamination $\Lambda_m(\alpha)$ consists of multi-geodesics. Let $\beta_{\alpha}$ be a regular level set of $h$ which is a $1$-cycle dual to $\alpha$. Since all intersections can be oriented positively, we have 
	\begin{equation}\label{eq geometric intersection=Klength}
		\langle \Lambda_m(\alpha), \beta_{\alpha} \rangle = \int_{\Lambda_m(\alpha)} \alpha = K \ell(\Lambda_m(\alpha)).
	\end{equation} 
	Since weighted closed geodesics are dense in the space of measured laminations ([\citenum{phCombinatoricsTrainTrack}, Theorem 3.1.3]), and the length functional is continuous ([\citenum{mbIntroGeometricTopology}, Theorem 8.2.21, 8.3.6 (3)]), \eqref{eq geometric intersection=Klength} holds for measured lamination as well. 
	\end{proof}
Using \eqref{eq algebraic intersection=geometric intersection}, we can define the length-$1$ geometric intersection between the measured lamination $\Lambda_m(\alpha)$ and the submanifolds $S$ which are regular level sets of $h$ in the following way.
\begin{defi}\label{defi geometric=K times length}
	The \textbf{length-$1$ geometric intersection number} of $S$ and the geodesic lamination $\Lambda_m(\alpha)$ is defined by
	$$
		\langle \Lambda_m(\alpha), S \rangle = K.
$$
\end{defi}
\noindent For hyperbolic surfaces, this corresponds to the transverse measure of a length-$1$ lamination in \PML.  [\citenum{flmMinimizingLaminationsRegularCover}, Theorem 1.7]\footnote{They have normalized so that the Lipschitz constant $K=1$. One of the merits of their normalization is that $\tilde{f}$ maps any leaf of $\widetilde{\Lambda(f)}$ isometrically into $\R$.} shows on a closed surface $M$, any orientable measured lamination can occur as the maximal stretch lamination for some suitable hyperbolic metric and tight map. This means that on a surface, the set of laminations that \Cref{defi geometric=K times length} holds includes all orientable geodesic laminations, which is compatible with the intersection form. This also generalizes the ordinary geometric intersection number if $\Lambda_m(\alpha)$ consists of closed geodesic (up to the length). 
 \subsection{The Thurston norm, the entropy, and the fiber translation length}
 The Thurston norm is a topological measurement of the level sets of a circle-valued map $f\in \alpha$. The Lipschitz constant is a geometric measurement of $\nabla f$ orthogonal to the level sets. They are related by
 \thurstonnormboundedbyLipschitzconstant* 
\begin{proof}
	Let $f\in \alpha$ be a $C^1$ tight map and thus $\|df\|_{\li}=\|[\alpha]\|_{\li}$. Let $h\in \alpha$ such that $h$ is smooth and $\|h - f \|_{C^1} < \epsilon$ for a small $\epsilon >0$.
	[\citenum{hansHarmonicForms}, Proposition 5.2.1] proves $\pi \|\alpha\|_{Th} < \|\alpha\|_{LA}$ and [\citenum{bdNorms}, Lemma 3.1] proves that least area norm is equal to the $L^1$-norm $$\|\alpha\|_{LA} = \|\alpha\|_{L^1}.$$ Since $\|\alpha\|_{L^1}$ is the minimal $L^1$-norm and $[dh] = \alpha$, we have $$\|\alpha\|_{L^1} \leq \int_M |dh|. $$
	 Now	 
	\[
	\pi\|\alpha\|_{Th} < \|\alpha\|_{LA} = \|\alpha\|_{L^1}   \leq \int_M |dh| \leq \int_M (|df|+\epsilon)\leq \vol(M)(\|df\|_{\li}+\epsilon).
	\]
	Since $\epsilon$ is arbitrary, we have $$\pi\|\alpha\|_{Th}< \vol(M)\|df\|_{\li}=\vol(M) \lip_{[\alpha]}.$$
\end{proof} 
As a corollary, we can prove a volume comparison result for a Seifert-fibered $3$-manifold and a fibered hyperbolic $3$-manifold assuming the Lipschitz constant of the fibrations are the same. 
\volTrivialMappingtorusLessthanHyperbolic*
\begin{proof}
Recall $S^1=\R/Z$ has a standard metric of length $1$ and $M_{Id}=S_g \times S^1$ is equipped with a product metric. Define $f_1:M_{Id}\rightarrow S^1=\R/\Z$ be the map defined by $(x, t) \rightarrow \lip_{[f]}\cdot t \in S^1$ which uniformly stretches every vertical geodesic by $\lip_{[f]}$. The map $f_1$ is a fibration which actually minimizes the Lipschitz constant in its homotopy class $[f_1]$ by an argument similar to the proof of \Cref{fiber translation Length equal Reciprocal K}. Let $\gamma$ be a vertical closed geodesic, whose length is $1/\lip_{[f_1]}$. 
	The volume of $M_{Id} = \area(S_g) \ell(\gamma) = 2\pi\chi(S_g) \ell(\gamma)= 2\pi\chi(S_g)/\lip_{[f_1]}=2\pi\|df\|_{Th}/\lip_{[f]} < 2\vol(M_{\phi})$. 
\end{proof}
 Let $S$ be a hyperbolic surface of genus $g$, and $\phi: S \rightarrow S $ a pseudo-Anosov homeomorphism. Denote the entropy of $\phi$ by $\text{ent}_{\phi}$, which is the infimum of the topological entropy of automorphisms of $S$ isotopic to $\phi$ [\citenum{flp}, Proposition 10.13]. Let $M\coloneqq M_{\phi}$ be the mapping torus $S \times [0,1] / (x,1) \sim (\phi(x), 0)$. The topological type of $M$ only depends on the isotopy class of $\phi$. A celebrated theorem by Thurston \cite{thurston1998hyperbolic} asserts that $M$ is a closed hyperbolic $3$-manifold. The projection of $S \times [0,1]$ onto the second factor induces a map from $M$ to the circle which is a fibration $f: M \rightarrow S^1$ with fiber $S$. The \zcover $\widetilde{M}$ of $M$ given by this fibration is homeomorphic to $S \times \R$ with deck transformations generated by $\tau(x, t) = (\phi(x), t+1)$. Let $K = \lip_{[f]}$ be the smallest Lipschitz constant associated with the homotopy class $[f]$. 
Let us now prove
\inequalityLipschitzConstantEntropy*
\begin{proof}
	Since $S$ is homotopic to a fiber of $f$, by [\citenum{thurstonNormHomology3manifold}, Theorem], we have the Thurston norm $\|df \|_{Th} = |\chi(S)|$. By [\citenum{kmNormalizedEntropyVolumePA}, Theorem 1.1], the volume of $M$ satisfies
	\[\vol (M) \leq 3\pi |\chi(S)| \cdot \text{ent}_{\phi}.\]
	Thus by \eqref{eq thurston norm bounded by Lipschitz constant} we have \[\pi |\chi(S)| < 3K \pi |\chi(S)| \cdot \text{ent}_{\phi},\] which implies
	\[1 < 3 K \cdot \text{ent}_{\phi}.\]
\end{proof}

\productLipschitzconstantentropy*
\begin{proof} 
	Let $f:M=S \times [0,1] / (x,1) \sim (\phi(x), 0) \rightarrow S^1$ be tight. Let $M_n= S \times [0,1] / (x,1) \sim (\phi^n(x), 0)$ be a degree-$n$ cover of $M$. Then the product of the Lipschitz constant of $f_{1/n}$ and the entropy stays invariant. This follows from \Cref{lipschitz constant under covering} and the formula
	\[ \tn{ent}_n=\textnormal{ent}_{\phi^n}=n \cdot \textnormal{ent}_{\phi}\]
	from [\citen{flp}, Proposition 10.4]. 
\end{proof}

It is an open question whether there exists a best Lipschitz map $h$ in the homotopy class of $f$ which induces a fibration. If such a fibration exists, we can extract from the proof above that every fiber of $h$ is equally translated by $\tau$ in $\wm$ by $\frac{1}{K}$. 
\appendix
\section{Harmonic expansion in a Margulis tube}\label{appendix}
Harmonic functions on Euclidean space in cylindrical coordinates or on hyperbolic space in spherical coordinates are well-studied. To the best knowledge of the author, it is difficult to locate references on harmonic functions on Margulis tubes in terms of hypergeometric functions, except \cite{vkHarmonicHyperbolicCylindrical}. 

\subsection{Formulas, the equation, and the series expansion in terms of hypergeometric functions}
Denote the length of its core geodesic $\gamma$ by $\lambda$ and the radius of a tube $\T$ by $R$. Let $z \in [0, \lambda]$ be the unit-speed parameterization of the geodesic. Let $r \in [0, R]$ be the radial coordinate and $\theta$ be the angular coordinate. The coordinates come with identification $(r, \theta, \lambda)\sim (r, \theta +\theta_0, 0)$ where $\theta_0$ is the twist parameter determined by the geometry of $\T$. The \textbf{metric} on $\T$ is then given by 
$$
	g=dr^2+\sinh^2{r} \di \theta^2 + \cosh^2{r} \di z^2.
$$
The pointwise lengths are
$$
	|dr|=1, |d\theta| =\frac{1}{\sinh r }, \tn{ and } |dz| =\frac{1}{\cosh r}. 
$$
An \textbf{orthonormal basis} of $1$-forms is 
$$
	dr, \, \sinh r \, d\theta, \cosh r\, dz.
$$
The \textbf{volume of the tube} $\T$ is 
\begin{equation}\label{eq vol tube}
	\ds \vol(\T)=\int_{0}^{\lambda}\int_{0}^{R}\int_{0}^{2\pi}dr \wedge \sinh r \di \theta \wedge \cosh r\,dz =\pi\lambda\sinh^2R. 
\end{equation}
The \textbf{area of a totally geodesic disk} $D_r =\{ z=constant\}$ is
$$
	\area(D)= 2\pi (\cosh r-1).
$$
Let $f(r,\theta, z)$ be a harmonic function defined on the tube $\T$. We have
\[df=f_rdr+f_{\theta}\di \theta+f_zdz\]
and 
$$
	*df=f_r\sinh r \cosh r \di \theta\wedge dz+ f_{\theta}\frac{\cosh r}{\sinh r} dz\wedge dr + f_z\frac{\sinh r}{\cosh r} dr\wedge \di \theta.
$$
The \textcolor{blue}{harmonic equation} for $f$ is
$$
	\Delta f =*\di *df= \partial_r(f_r \sinh r \cosh r) +\partial_{\theta}f_{\theta}\frac{\cosh r}{\sinh r}+ \partial_z f_z\frac{\sinh r}{\cosh r}=0.
$$
If we use separation of variables, $f(r, \theta, z)=h(r)s(\theta, z)$, we have 
$$
	\frac{1}{2} \partial_r(h'(r)\sinh 2r ) s(\theta, z) + \frac{\cosh r}{\sinh r}h(r)\frac{\partial^2 s(\theta, z)}{\partial \theta^2} +h(r)\frac{\sinh r}{\cosh r}\frac{\partial^2 s(\theta, z)}{\partial z^2}=0. 
$$
Then 
\begin{equation}\label{harmonicSeparationVariable}
	\frac{1}{2}\frac{\partial_r(h'(r)\sinh 2r )}{h(r)}+\frac{\cosh r }{\sinh r}\frac{\partial^2 s(\theta, z)}{\partial \theta^2}\frac{1}{s(\theta, z)}+\frac{\sinh r }{\cosh r }\frac{\partial^2 s(\theta, z)}{\partial z^2}\frac{1}{s(\theta, z)}=0.	
\end{equation}
The function $f$ satisfies certain periodicity with respect to $\theta$ and $z$: 
\begin{equation}\label{periodTheta}
	f(\theta)=f(\theta+2\pi),
\end{equation}
and
\begin{equation}\label{periodZ}
	f(r, \theta+\theta_0, z)= f(r, \theta, z+\lambda). 
\end{equation}
Differentiate \eqref{harmonicSeparationVariable} with respect to $\theta$, we get
\[\frac{\partial^2 s_{km}(\theta, z)}{\partial \theta^2} = -k^2  s_{km} \text{ for } k \in \nz, \]
where we have used \eqref{periodTheta}. 
Differentiate \eqref{harmonicSeparationVariable} with respect to $z$, we get
\[\frac{\partial^2 s_{km}(\theta, z)}{\partial z^2} = -\left(\frac{2m\pi}{\lambda}+\frac{k\theta_0}{\lambda} \right)^2  s_{km} \text{ for } m \in \nz, \]
using \eqref{periodZ}. 

\begin{lem}[Laplacian equation in cylindrical coordinates and the separation of variables]
	Any harmonic function inside the tube can be expressed in terms of the series
	$$
		\ds	f(r,\theta, z)=\sum\limits_{k,m=0}^{\infty} h_{km}(r) s_{km}(\theta, z),
	$$
	where $h_{km}(r)$ is the solution to 
	\begin{equation}\label{eq radial part of harmonic}
		\frac{1}{2}\frac{\partial_r(h'(r)\sinh 2r )}{h(r)}-k^2\frac{\cosh r }{\sinh r} - \left(\frac{2m\pi}{\lambda}+\frac{k\theta_0}{\lambda}\right)^2 \frac{\sinh r }{\cosh r }=0.
	\end{equation}
\end{lem}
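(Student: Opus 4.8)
The plan is to diagonalize the two periodic directions $\theta$ and $z$ and to exploit that, as \eqref{harmonicSeparationVariable} already shows, the metric coefficients $\sinh^2 r$ and $\cosh^2 r$ depend on $r$ alone, so the Laplacian on $\T$ acts diagonally on each joint $\theta,z$-frequency mode. First I would work in the fundamental domain $\{0\le r\le R,\ \theta\in\R/2\pi\Z,\ z\in[0,\lambda]\}$ glued along $(r,\theta,\lambda)\sim(r,\theta+\theta_0,0)$, and observe that a function of $(\theta,z)$ descends to $\T$ exactly when it is $2\pi$-periodic in $\theta$ and satisfies $s(\theta,z+\lambda)=s(\theta-\theta_0,z)$; imposed on a character $e^{i(a\theta+bz)}$ this forces $a=k\in\Z$ and $b=\tfrac{2\pi m+k\theta_0}{\lambda}$ with $m\in\Z$. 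These characters are precisely the Laplace eigenfunctions of the flat cross-sectional torus $\torus_r$ with metric $\sinh^2 r\,d\theta^2+\cosh^2 r\,dz^2$, so they form a complete orthogonal basis of $L^2(\torus_r)$ for every $r>0$; taking real and imaginary parts and combining complex-conjugate modes gives exactly the expansion in the real functions $s_{km}$ of \eqref{skmThetaz}.

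For each $r$ I would then take $h_{km}(r)$ to be the $s_{km}$-Fourier coefficient of $f(r,\cdot,\cdot)$ on $\torus_r$, so that $f(r,\theta,z)=\sum_{k,m}h_{km}(r)s_{km}(\theta,z)$. A harmonic function is smooth, indeed real-analytic, on $\T$ by elliptic regularity, so each $h_{km}$ is smooth in $r$ and, by differentiation under the integral sign, $h_{km}'$ and $h_{km}''$ are the $s_{km}$-Fourier coefficients of $\partial_r f$ and $\partial_r^2 f$. Plugging $\partial_\theta^2 s_{km}=-k^2 s_{km}$ and $\partial_z^2 s_{km}=-\bigl(\tfrac{2\pi m+k\theta_0}{\lambda}\bigr)^2 s_{km}$ — the relations coming from \eqref{periodTheta} and \eqref{periodZ} — into $\Delta f=\partial_r(f_r\sinh r\cosh r)+f_{\theta\theta}\tfrac{\cosh r}{\sinh r}+f_{zz}\tfrac{\sinh r}{\cosh r}$ and projecting $\Delta f=0$ onto the $s_{km}$-mode yields
\[
\tfrac12\,\partial_r\bigl(h_{km}'(r)\sinh 2r\bigr)-k^2\,\tfrac{\cosh r}{\sinh r}\,h_{km}(r)-\Bigl(\tfrac{2\pi m+k\theta_0}{\lambda}\Bigr)^2\tfrac{\sinh r}{\cosh r}\,h_{km}(r)=0 ,
\]
which is \eqref{eq radial part of harmonic} after dividing by $h_{km}$; summing over $(k,m)$ recovers $f$.

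The one genuinely analytic point is the justification of the term-by-term manipulations, i.e.\ that $\Delta$ commutes with the sum over $(k,m)$. This follows from the fact that the $(\theta,z)$-Fourier coefficients of a smooth function decay faster than any polynomial, here locally uniformly in $r$ because $\partial_r f$ and $\partial_r^2 f$ are themselves smooth, so the series together with its first two $r$-derivatives converge uniformly on compact $r$-intervals; this is the regularity input I would spell out. A secondary, minor issue is the coordinate degeneracy at the core $r=0$: when $f$ is harmonic on all of $\T$, only those modes survive whose $h_{km}$ has the correct boundary behavior as $r\to0$, but identifying these is more naturally postponed to the explicit analysis of the solutions of \eqref{eq radial part of harmonic} in terms of hypergeometric functions in \Cref{theorem harmonic expansion into hypergeometric}, since the present lemma asserts only the form of the expansion and the radial ODE.
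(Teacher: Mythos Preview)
Your proposal is correct and follows essentially the same approach as the paper: identify the admissible joint $(\theta,z)$-frequencies from the periodicity constraints \eqref{periodTheta}--\eqref{periodZ}, expand $f$ in these modes, and read off the radial ODE \eqref{eq radial part of harmonic} from \eqref{harmonicSeparationVariable}. The paper's derivation is more formal---it simply separates variables and differentiates \eqref{harmonicSeparationVariable} in $\theta$ and $z$ to extract the eigenvalues---whereas you supply the Fourier-analytic framing (completeness of the characters on each $\torus_r$, rapid decay of coefficients for smooth $f$) that justifies the termwise manipulations; this extra care is welcome and does not deviate from the paper's argument.
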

Here $\int_{0}^{2\pi }\skm(\theta) d\theta=0$ unless $k=0$. We first analyze $h_{00}(r)$ which satisfies \par  
\[\partial_r(h'(r)\sinh 2r) =0.\] We have
\[
h(r)=c_1 + c_2 \log \frac{\sinh r}{\cosh r}.
\]
Since $h(r)$ is well-defined at $r=0$, $c_2$ has to be $0$. Thus $h_{00}(r) \equiv c_1$. This is essentially what Brock and Dunfield show in [\citenum{bdNorms}, p. 549] that harmonic $1$-forms independent of $\theta$ and $z$ coordinates are proportional to $dz$. We will henceforward assume either $m$ or $k \geq 1$.  

For general $m$ with $k=0$, the radial equation is
\begin{equation}\label{harmonic equation without k}
	\partial_r(h'(r)\sinh 2r )- 2h(r) \frac{4\pi^2m^2}{\lambda^2} \frac{\sinh r }{\cosh r}=0.
\end{equation} 
For general $m, k$, the equation is 
$$
	h''(r)\sinh 2r+ h'(r)2\cosh 2r - 2k^2 h(r) \frac{\cosh r}{\sinh r}- 2\left(\frac{2m\pi}{\lambda}+\frac{k\theta_0}{\lambda}\right)^2h(r) \frac{\sinh r }{\cosh r}=0.
$$
For $r \neq 0$, dividing by $\sinh 2r$, we have 
\[h''(r)+ 2h'(r)\frac{\cosh 2r}{\sinh 2r} - k^2 h(r) \frac{1}{\sinh^2 r}- \left(\frac{2m\pi}{\lambda}+\frac{k\theta_0}{\lambda} \right)^2 h(r) \frac{1}{\cosh^2 r}=0\]
After the change of variable $u=(\tanh r)^2$, using
\begin{align*}
\frac{du}{dr}&=2\tanh r\frac{1}{\cosh^2 r}=2\sqrt{u}(1-u), 	\frac{1}{\sinh^2 r} = \frac{1-u}{u},\frac{1}{\cosh^2 r} =1-u \\
h''(r) &= \frac{du}{dr}\frac{d}{du}(2h'(u)\sqrt{u}(1-u)) =4h''(u)u(1-u)^2 + 2h'(u)(1-3u)(1-u)\\
\cosh 2r &= \cosh^2 r +\sinh^2 r =\frac{1+u}{1-u}, \, \frac{\cosh 2r}{\sinh 2r} =\frac{u+1}{2\sqrt{u}}
\end{align*}
we have
\begin{align*}
&4h''(u)u(1-u)^2+2h'(u)(1-3u)(1-u)+ 2h'(u)(1+u)(1-u) \\ 
&- k^2h(u)\frac{1-u}{u}- \left(\frac{2m\pi}{\lambda}+\frac{k\theta_0}{\lambda} \right)^2 h(u)(1-u)=0,
\end{align*}
which is equivalent to
\[(1-u)\left(h''(u)u(1-u)+h'(u)(1-u) -\frac{k^2}{4}h(u)\frac{1}{u} -(\frac{m\pi}{\lambda}+\frac{k\theta_0}{2\lambda})^2 h(u)\right)=0.\]
For $r \in [0, \infty)$, $u \in [0, 1)$. Thus we can factor out $1-u$ and obtain
\begin{equation}\label{kmHarmonicWRTu}
	h''(u)u(1-u)+h'(u)(1-u) -\frac{k^2}{4}h(u)\frac{1}{u} -(\frac{m\pi}{\lambda}+\frac{k\theta_0}{2\lambda})^2h(u)=0
\end{equation}
Divide by $u(1-u)$, we have 
$$
	h''(u)+h'(u)\frac{1}{u} -\frac{k^2}{4}h(u)\frac{1}{u^2(1-u)} -(\frac{m\pi}{\lambda}+\frac{k\theta_0}{2\lambda})^2 h(u)\frac{1}{u(1-u)}=0.
$$
Set $h(u)=u^{\frac{k}{2}} \phi(u)$, then using the formula after [\citenum{specialFunctionsgrr}, (2.3.4)] and 
\[\frac{-k^2}{4}\frac{1}{u^2(1-u)}+\frac{k^2}{4}\frac{1/u}{u}=-\frac{k^2}{4}\frac{1}{u(1-u)},\]

we have that $\phi(u)$ satisfies
$$
	\phi''(u)+\frac{1}{u}(1+k)\phi'(u)-\frac{1}{u(1-u)}(\frac{k^2}{4}+(\frac{m\pi}{\lambda}+\frac{k\theta_0}{2\lambda})^2)\phi(u)=0.
$$
Compared with [\citenum{specialFunctionsgrr}, (2.3.5)], first assume $k\neq 0$. 
There are two solutions,
\begin{align*}
	\phi_1(u) &= \tfo\left(\frac{k}{2} +i(\frac{m\pi}{\lambda}+\frac{k\theta_0}{2\lambda}), \frac{k}{2} -i(\frac{m\pi}{\lambda}+\frac{k\theta_0}{2\lambda}); 1+ k;u \right)\\
	&\tn{and} \\
	\phi_2(u) &=u^{-k} \tfo\left(-\frac{k}{2} +i(\frac{m\pi}{\lambda}+\frac{k\theta_0}{2\lambda}), -\frac{k}{2} -i(\frac{m\pi}{\lambda}+\frac{k\theta_0}{2\lambda});1-k;u \right) \\
\end{align*}
by [\citenum{specialFunctionsgrr}, Remark 2.3.1]. But $h(u)=u^{\frac{k}{2}} \phi_2(u)$ is not well-defined at $u=0$ by [\citenum{specialFunctionsgrr}, (1.1.2), (2.1.2) the definition of a hypergeometric function]. If $k=0$, in addition to $\phi_1(u)$, the other independent solution [\citenum{specialFunctionsgrr}, (2.3.18)] contains a term of $\log u$ which is not well-defined at $u=(\tanh 0)^2=0$. Thus we have 
\begin{lem}
	The solution to the hypergeometric equation \eqref{kmHarmonicWRTu} for general $m, k$ which is well-defined at $u=0$ is
	$$
		h_{km}(u)= u^{\frac{k}{2}} \tfo\left(\frac{k}{2} +i(\frac{m\pi}{\lambda}+\frac{k\theta_0}{2\lambda}), \frac{k}{2} -i(\frac{m\pi}{\lambda}+\frac{k\theta_0}{2\lambda});1+k;u \right),
$$
	and in terms of the original variable $r$, 
	$$
		h_{km}(r)= \tanh^k r \, \tfo\left(\frac{k}{2} +i(\frac{m\pi}{\lambda}+\frac{k\theta_0}{2\lambda}), \frac{k}{2} -i(\frac{m\pi}{\lambda}+\frac{k\theta_0}{2\lambda});1+k;(\tanh r)^2 \right). 
	$$
\end{lem}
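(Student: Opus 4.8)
The plan is to read off the conclusion from the reduction carried out immediately above: after dividing \eqref{kmHarmonicWRTu} by $u(1-u)$ and substituting $h(u)=u^{k/2}\phi(u)$, the function $\phi$ solves a hypergeometric equation in normal form, so the two linearly independent local solutions at $u=0$ are the two standard Frobenius solutions, and the only remaining task is to discard the singular one.

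First I would pin down the parameters. Comparing $\phi''(u)+\tfrac{1+k}{u}\phi'(u)-\tfrac{1}{u(1-u)}\bigl(\tfrac{k^2}{4}+(\tfrac{m\pi}{\lambda}+\tfrac{k\theta_0}{2\lambda})^2\bigr)\phi(u)=0$ with the hypergeometric equation in the form of [\citenum{specialFunctionsgrr}, (2.3.5)], one matches $c=1+k$ and takes $a,b$ to be the two roots of $x^2-kx+\bigl(\tfrac{k^2}{4}+(\tfrac{m\pi}{\lambda}+\tfrac{k\theta_0}{2\lambda})^2\bigr)$, namely $a=\tfrac{k}{2}+i(\tfrac{m\pi}{\lambda}+\tfrac{k\theta_0}{2\lambda})$ and $b=\tfrac{k}{2}-i(\tfrac{m\pi}{\lambda}+\tfrac{k\theta_0}{2\lambda})$, so that $a+b=k$ and $ab=\tfrac{k^2}{4}+(\tfrac{m\pi}{\lambda}+\tfrac{k\theta_0}{2\lambda})^2$ as required; this is exactly the parameter pattern recorded in [\citenum{specialFunctionsgrr}, Remark 2.3.1]. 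The Frobenius solution with indicial exponent $0$ at $u=0$ is then $\phi_1(u)=\tfo(a,b;c;u)$, which is holomorphic there, giving $h_{km}(u)=u^{k/2}\tfo(a,b;c;u)$.

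Then I would rule out the other local solution at $u=0$. When $k\geq 1$ the two indicial exponents of the $\phi$-equation at $u=0$ differ by the integer $-k$, and the second Frobenius solution has leading behaviour $u^{-k}$ (possibly with a logarithmic correction), so the corresponding $h=u^{k/2}\phi$ behaves like $u^{-k/2}$ as $u\to 0$; when $k=0$ the two exponents coincide and, as recalled via [\citenum{specialFunctionsgrr}, (2.3.18)], the second solution picks up a $\log u$ term. In either case the second solution is unbounded at $u=0$, hence at $r=0$ since $u=\tanh^2 r$, so it cannot represent a harmonic function near the core geodesic $\gamma$ of the tube; this forces $h_{km}$, up to a scalar, to be $u^{k/2}\tfo(a,b;c;u)$. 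Substituting $u=\tanh^2 r$ then gives the stated formula for $h_{km}(r)$. The only place that needs genuine care is the bookkeeping in the substitution $h=u^{k/2}\phi$ --- in particular the cancellation $\tfrac{-k^2}{4}\tfrac{1}{u^2(1-u)}+\tfrac{k^2}{4}\tfrac{1/u}{u}=-\tfrac{k^2}{4}\tfrac{1}{u(1-u)}$ used above --- together with verifying that the excluded solution really is singular at $u=0$ in both the power ($k\geq 1$) and logarithmic ($k=0$) cases; past those points the argument is a direct appeal to the classical theory of the hypergeometric equation.
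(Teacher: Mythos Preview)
Your proposal is correct and follows essentially the same route as the paper: the derivation preceding the lemma already reduces \eqref{kmHarmonicWRTu} via the substitution $h(u)=u^{k/2}\phi(u)$ to the hypergeometric equation in the form of [\citenum{specialFunctionsgrr}, (2.3.5)], identifies the two Frobenius solutions, and discards the one singular at $u=0$ (power-type for $k\neq 0$, logarithmic for $k=0$ via [\citenum{specialFunctionsgrr}, (2.3.18)]). Your treatment is slightly more explicit about matching $a,b,c$ via $a+b=k$, $ab=\tfrac{k^2}{4}+(\tfrac{m\pi}{\lambda}+\tfrac{k\theta_0}{2\lambda})^2$ and about the indicial-exponent analysis, but the argument is the same.
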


The radius of convergence of a hypergeometric function $_2F_1(\,,\,;\,;u)$ is $|u|<1$. The functions $h_{km}(r)$ are strictly increasing by the positivity of the coefficients in its Taylor expansion.
 By [\citenum{specialFunctionsgrr}, Theorem 2.1.2], since $1+k-(\frac{k}{2}+i(\frac{m\pi}{\lambda}+\frac{k\theta_0}{2\lambda})+\frac{k}{2}-i(\frac{m\pi}{\lambda}+\frac{k\theta_0}{2\lambda}))=1>0$, the asymptotic as $r \rightarrow \infty$ of $h_{km}(r)$ for $k \geq 0$ is finite and will be denoted as $h_{km}(\infty)$. We can also check that for $(k, m) \neq (0,0)$, 
 \begin{equation}\label{skmThetaz}
 	s_{km}(\theta, z)=a_{km} \sin(k\theta+\frac{2\pi m}{\lambda}z+\frac{k\theta_0}{\lambda}z)+a_{km} '\cos(k\theta+\frac{2\pi m}{\lambda}z+\frac{k\theta_0}{\lambda}z).
 \end{equation}
 
\begin{theorem}\label{theorem harmonic expansion into hypergeometric}
		Any harmonic function well-defined in the tube with core length $\lambda$ and twist parameter $\theta_0$ can be expressed in terms of the series
	\begin{align*}
\numberthis \label{harmonic expansion into hypergeometric}		\ds	f(r,\theta, z)&=\sum\limits_{k,m=0}^{\infty} \tanh^k r \, \tfo\left(\frac{k}{2} +i(\frac{m\pi}{\lambda}+\frac{k\theta_0}{2\lambda}), \frac{k}{2} -i(\frac{m\pi}{\lambda}+\frac{k\theta_0}{2\lambda});1+k;(\tanh r)^2 \right) \\ &\cdot\left(a_{km} \sin(k\theta+\frac{2\pi m}{\lambda}z+\frac{k\theta_0}{\lambda}z)+a_{km} '\cos(k\theta+\frac{2\pi m}{\lambda}z+\frac{k\theta_0}{\lambda}z)\right).
	\end{align*}
\end{theorem}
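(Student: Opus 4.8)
The statement is essentially the assembly of the mode-by-mode analysis carried out above, so the plan is to (i) reduce an arbitrary harmonic $f$ to a Fourier series in the $(\theta,z)$-variables, (ii) identify each coefficient as a radial mode, and (iii) invoke the earlier lemmas to write that mode as the hypergeometric function appearing in \eqref{harmonic expansion into hypergeometric}, and then (iv) check convergence. First I would fix a harmonic $f$ on $\T$ and restrict to a slightly smaller coaxial subtube $\T_{R'}$ with $R' < R$, so that $u = \tanh^2 r$ ranges over a compact subinterval of $[0,1)$; on $\T_{R'}$ the function $f$ is smooth up to the (flat) boundary torus, so it admits an absolutely and uniformly convergent Fourier expansion in the two flat directions. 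The periodicity constraints \eqref{periodTheta} and \eqref{periodZ} dictate that the admissible frequencies are exactly the pairs $\big(k,\, \tfrac{2\pi m}{\lambda}+\tfrac{k\theta_0}{\lambda}\big)$ with $k,m \in \nz$, which produces precisely the trigonometric factor $s_{km}(\theta,z)$ of \eqref{skmThetaz}; write the resulting coefficient functions as $h_{km}(r)$.

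Next I would differentiate the expansion term by term in $\theta$ and $z$ (legitimate on $\T_{R'}$ by smoothness and decay of the coefficients) and feed it into the Laplace equation $\Delta f = 0$ written out in the preliminaries; orthogonality of the trigonometric system in $L^2$ of the flat torus forces each $h_{km}$ to satisfy the radial ODE \eqref{eq radial part of harmonic}, equivalently \eqref{kmHarmonicWRTu} after the substitution $u = \tanh^2 r$. By the lemma identifying the unique solution of \eqref{kmHarmonicWRTu} regular at $u=0$, together with the fact that $h_{km}$ extends smoothly across the core geodesic $r=0$ (where the other fundamental solution has a $u^{-k}$ or $\log u$ singularity and hence is excluded), we get $h_{km}(r) = c_{km}\tanh^k r\, \tfo\!\big(\tfrac{k}{2}+i\eta_{km},\tfrac{k}{2}-i\eta_{km};1+k;(\tanh r)^2\big)$ with $\eta_{km} = \tfrac{m\pi}{\lambda}+\tfrac{k\theta_0}{2\lambda}$, up to an overall constant which I absorb into $a_{km}, a_{km}'$; the case $(k,m)=(0,0)$ is the already-treated constant mode $h_{00}\equiv c_1$. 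Assembling the pieces and letting $R' \uparrow R$ gives the claimed series \eqref{harmonic expansion into hypergeometric} on all of $\T$.

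The step I expect to be the main obstacle is the convergence/legitimacy bookkeeping rather than any single identity: I must justify that the termwise-differentiated Fourier series still converges (so that the mode-by-mode reduction to an ODE is valid), and that the reconstructed series converges back to $f$ rather than to some other harmonic function sharing the same boundary modes. Both follow from standard elliptic regularity — interior estimates give that the $(\theta,z)$-Fourier coefficients of a harmonic function decay faster than any polynomial in $(k,m)$ on compact subtubes — combined with the observation that on $\T_{R'}$ every hypergeometric factor is evaluated strictly inside its disk of convergence $|u|<1$ and is increasing in $r$, so the radial factors are uniformly bounded by $h_{km}(\tanh^2 R')$. Uniqueness is then immediate: two harmonic functions with the same Fourier data would have all mode differences $h_{km}-h_{km}'$ solving \eqref{kmHarmonicWRTu} and regular at $u=0$, hence proportional to the same $\tfo$, and matching a single further radial value forces equality. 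This closes the argument.
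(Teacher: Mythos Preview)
Your proposal is correct and follows essentially the same route as the paper: the theorem there is not given a separate proof but is stated as the summary of the preceding computations (separation of variables, the periodicity constraints \eqref{periodTheta}--\eqref{periodZ} fixing the angular modes $s_{km}$, the change of variable $u=\tanh^2 r$ reducing \eqref{eq radial part of harmonic} to the hypergeometric equation \eqref{kmHarmonicWRTu}, and the selection of the solution regular at $u=0$). Your write-up in fact adds rigor the paper leaves implicit --- the restriction to a compact subtube, the elliptic-regularity decay of Fourier coefficients to justify termwise differentiation, and the uniqueness step --- so if anything it is a cleaner version of the same argument.
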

\subsection{The conjecture without volume upper bound}\label{counter example Brock-Dunfield without GH convergence}
\Cref{integral *df least area close to totally geodesic} implies that asymptotically for $\int_{D}*df$ there is little difference between a least area $D$ and a totally geodesic one. We will construct explicit examples using totally geodesic disks, which significantly simplifies the computation. On such a geodesic disk, for the hypergeometric expansions we only need to consider terms with $k=0$, since $$ \int_{D}*df=\int_{0}^{R}\int_{0}^{2\pi}f_z \tanh r \di\theta \wedge dz$$ and $\int_{0}^{2\pi}s_{km}(\theta, z)d\theta=0$ when $k \neq 0$. With the change of variable $u=(\tanh r)^2$, the radial part of the harmonic equation \eqref{eq radial part of harmonic} becomes  
$$
	h_m''(u)u(1-u)+h_m'(u)(1-u)-\frac{\pi^2 m^2}{\lambda^2}h_m(u)=0.
$$
The solution well-defined at $0$ is 
$$
	h_m(u)= \tfo\left(i\frac{\pi m}{\lambda},-i\frac{\pi m}{\lambda};1;u \right).
$$
 By \eqref{harmonic equation without k},
\begin{align*}\label{integral*dfTotallyGeodesic}
	\int_{0}^{R} h_m(r) \frac{\sinh r }{\cosh r} dr &
	=\frac{\lambda^2}{8\pi^2m^2}h_m'(R) \sinh 2R \\
	&= \frac{\lambda^2}{4\pi^2m^2} h_m'(u)\sqrt{u}(1-u)\sinh 2R = \frac{\lambda^2}{2\pi^2m^2} h_m'(u)u. \numberthis
\end{align*}
Here $u=(\tanh R)^2$ and $\vol(\T)=\pi\lambda \sinh^2 R$ is finite.  
By [\citenum{specialFunctionsgrr}, (2.5.1)], 
$$
	h_m'(u)= \frac{\pi^2 m^2}{\lambda^2}\,\tfo\left(1+ i\frac{\pi m}{\lambda}, 1-i\frac{\pi m}{\lambda}; 2 ; u \right)
$$
which asymptotic at $u=1$ is described by Gauss. We obtain an inequality which complements Gauss' asymptotic on $\lim\limits_{r\rightarrow \infty}h_m'(r)$.
\begin{lem}
	For any $d>0$, the following inequality holds for hypergeometric functions.
	\begin{equation}\label{hypergeometric(ai+ -ai+ c+)<hypergeometric(ai -ai c) log 1/(1-u)}
		\tfo(1+ di,1-di; 2; u) u < \tfo( di,-di; 1; u) \log \frac{1}{1-u}.
	\end{equation}
	Equivalently, for $h_m(r)=\tfo(i \frac{\pi m}{\lambda},-i \frac{\pi m}{\lambda}; 1; (\tanh r)^2)$, 
	\begin{equation}\label{h'(r)< h(r)8d^2 logcoshr /sinh^2r}
		h_m'(r)   < 4(\frac{\pi m}{\lambda})^2 \frac{h_m(r) \log \cosh r}{\sinh r \cosh r}. 
	\end{equation} 
\end{lem}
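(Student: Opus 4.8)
The plan is to prove the first inequality \eqref{hypergeometric(ai+ -ai+ c+)<hypergeometric(ai -ai c) log 1/(1-u)} and then deduce the second, \eqref{h'(r)< h(r)8d^2 logcoshr /sinh^2r}, by the change of variables $u=(\tanh r)^2$. Write $F(u)=\tfo(di,-di;1;u)$ and $G(u)=\tfo(1+di,1-di;2;u)$. Two elementary facts suffice: first, $G(u)>0$ for $u\in[0,1)$, since its Taylor coefficients $(1+di)_n(1-di)_n/\bigl((2)_n\,n!\bigr)=\prod_{j=1}^{n}(j^2+d^2)/\bigl((n+1)!\,n!\bigr)$ are positive — the same positivity already used for the monotonicity of $h_{km}$; second, the differentiation formula \cite{specialFunctionsgrr} (as applied in the excerpt for $h_m$) gives $F'(u)=d^2 G(u)$. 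For the equivalence with \eqref{h'(r)< h(r)8d^2 logcoshr /sinh^2r} one takes $d=\pi m/\lambda$ and uses $1-u=1/\cosh^2 r$, so $\log\frac{1}{1-u}=2\log\cosh r$ and $1/u=\cosh^2 r/\sinh^2 r$, together with $h_m'(r)=F'(u)\,\frac{du}{dr}=d^2G(u)\,\frac{2\sinh r}{\cosh^3 r}$; substituting these into \eqref{h'(r)< h(r)8d^2 logcoshr /sinh^2r} and clearing denominators returns exactly $u\,G(u)<F(u)\log\frac{1}{1-u}$.

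To establish $u\,G(u)<F(u)\log\frac{1}{1-u}$ on $(0,1)$, the key is the hypergeometric differential equation satisfied by $F$, which here reads $u(1-u)F''+(1-u)F'-d^2F=0$ (this is \eqref{kmHarmonicWRTu} with $k=0$). Substituting $F'=d^2G$ and $F''=d^2G'$ and dividing by $d^2$ collapses it to the first-order identity $u(1-u)G'+(1-u)G-F=0$, that is, $u\,G'(u)=\frac{F(u)}{1-u}-G(u)$ on $(0,1)$. Now set $w(u)\coloneqq F(u)\log\frac{1}{1-u}-u\,G(u)$; the claim is $w>0$ on $(0,1)$. One has $w(0)=0$, and differentiating, $w'(u)=F'(u)\log\frac{1}{1-u}+\frac{F(u)}{1-u}-G(u)-u\,G'(u)$; inserting $F'=d^2G$ and the identity for $u\,G'$, the terms $\pm\bigl(\frac{F(u)}{1-u}-G(u)\bigr)$ cancel, leaving $w'(u)=d^2\,G(u)\log\frac{1}{1-u}$. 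Since $G(u)>0$ on $[0,1)$ and $\log\frac{1}{1-u}>0$ on $(0,1)$, we get $w'>0$ on $(0,1)$, hence $w(u)>0$ there (equivalently, $w(u)=d^2\int_0^u G(t)\log\frac{1}{1-t}\,dt$), which is the desired inequality.

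The only step that is not pure bookkeeping is recognizing that the hypergeometric ODE for $F$, re-expressed through $F'=d^2G$, becomes a clean first-order relation among $F$, $G$, $G'$ that forces the exact cancellation in $w'$; after that, the monotonicity argument is immediate. The points needing care are the correct sign of the constant ($+d^2$, not $-d^2$) in both the ODE and the differentiation formula, and the sign in $\frac{d}{du}\log\frac{1}{1-u}=\frac{1}{1-u}$. As a consistency check, one may instead compare $u^n$-coefficients: the left side is $\frac{1}{d^2}\sum_{n\ge1}n\,a_n u^n$ and the right side is $\sum_{n\ge1}\bigl(\sum_{k=0}^{n-1}\frac{a_k}{n-k}\bigr)u^n$, where $a_n>0$ are the Taylor coefficients of $F$, so the inequality amounts to $\frac{n\,a_n}{d^2}\le\sum_{k=0}^{n-1}\frac{a_k}{n-k}$ for all $n\ge1$, with equality exactly at $n=1$; this termwise estimate holds (it is exactly the nonnegativity of the coefficients of $w$), but the ODE route is shorter and yields the strict inequality at once.
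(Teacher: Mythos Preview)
Your proof is correct and is essentially the same argument as the paper's, just packaged dually: the paper integrates the radial ODE \eqref{harmonic equation without k} to get $h_m'(R)\sinh R\cosh R = 4d^2\int_0^R h_m(r)\tanh r\,dr$ and then bounds the integral by $h_m(R)\log\cosh R$ using monotonicity of $h_m$, whereas you differentiate $w(u)=F\log\frac{1}{1-u}-uG$ and use positivity of $G$; since $F'=d^2G$, monotonicity of $h_m$ and positivity of $G$ are the same hypothesis, and your first-order relation $uG'=F/(1-u)-G$ is exactly the derivative of the paper's integral identity.
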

\noindent\eqref{hypergeometric(ai+ -ai+ c+)<hypergeometric(ai -ai c) log 1/(1-u)} and \eqref{h'(r)< h(r)8d^2 logcoshr /sinh^2r} are asymptotically sharp by Gauss' asymptotic for hypergeometric functions [\citenum{specialFunctionsgrr}, Theorem 2.1.3], i.e.,  
\begin{equation}\label{limit h'R}
	\lim\limits_{r\rightarrow \infty}h_m'(r) =\lim\limits_{r\rightarrow \infty}4(\frac{\pi m}{\lambda})^2 \frac{h_m(r) \log \cosh r}{\sinh r \cosh r}
\end{equation}
\begin{proof}
	Let $d=\frac{\pi m}{\lambda}$, 
	\[\int_{0}^{R}h_m(r)\frac{\sinh r}{\cosh r}dr= \frac{1}{4d^2}h'_m(R)\sinh R\cosh R= \frac{1}{2d^2}h_m'(u)u,\] where we have used $\frac{du}{dr}= 2\sqrt{u}(1-u)=2\tanh r \frac{1}{\cosh^2 r}$. Now
	\begin{align*}
		&\int_{0}^{R}h_m(r)\frac{\sinh r}{\cosh r} dr < h_m(R)\int_{0}^{R}\frac{\sinh r}{\cosh r}dr\\
		=&h_m(R) \log \cosh R = \frac{1}{2} h_m(u) \log \frac{1}{1-u}.
	\end{align*}
\end{proof}

Combining \eqref{integral*dfTotallyGeodesic} and \eqref{limit h'R}, we have
$$
	\int_{0}^{R} h_m(r) \frac{\sinh r }{\cosh r} dr = b_m \tanh^2 R \, \log\cosh R. 
$$
We also need a basic lemma concerning the radius and injectivity radius of a Margulis tube. 
\begin{lem}\label{lambda log cosh R}
	Suppose $\T$ is a Margulis tube with core geodesic length $\lambda$, radius $R$, and Margulis constant $0.29$. Then as $\lambda \rightarrow 0$, 
	\begin{equation}
		\lambda \log (-\lambda) (\log \cosh R) \rightarrow 0. 
	\end{equation}
\end{lem}
\begin{proof}
	We use [\citenum{fpsEffectiveDistanceNestedTubes}, Proposition 5.7] to bound the radius of a tube from above. Using their notation, $\epsilon=0.29$ is the Margulis constant and $\delta=\lambda$ is the length of the core geodesic. Since $\T^{\leq \lambda} \subset \T$ is the core geodesic, $R=d_{2\pi, \lambda, \tau}$ is the radius. We have 
	$$\cosh R \leq \sqrt{\frac{\cosh 0.29-1}{\cosh \lambda -1}},$$
	which implies 
	$$\log \cosh R\leq \frac{1}{2}\log \frac{0.043}{\cosh \lambda -1}. $$
	Using L'Hopital's rule, it is easy to see that 
	$$\lim\limits_{\lambda\rightarrow 0} \lambda \log (-\lambda) \log \frac{0.043}{\cosh \lambda -1}  \rightarrow 0. $$	
\end{proof}

We are now ready to show
\begin{theorem}\label{local conjecture l infinity not bounded by sqrt inj l2}
Let $M$ be a closed hyperbolic $3$-manifold with $b_1(M)>0$. Then there \textbf{does not} exist a uniform constant $c$ such that for harmonic $1$-forms $\alpha$,
	\[\|\alpha\|_{\li} \leq c\sqrt{-\log \inj(M)}\|\alpha\|_{L^2}.\] 
\end{theorem}
\begin{proof}
	For $[dz]\in H^1(\T_R; \R)$ dual to $[D, \partial D] \in H_2(\T_R, \partial \T; \R)$, we have 
	\[ \|dz\|_{\li(\T)}= \max\limits_{r\in [0, R]}\frac{1}{\cosh r}=1 \]
	and \[ \|dz\|_{L^2(\T)} = \sqrt{2\pi \lambda\log \cosh R}. \]
	Thus by \Cref{lambda log cosh R}, for any $c>0$, 
	\[\|dz\|_{\li(\T)} > c\sqrt{- \log \lambda}\|dz\|_{L^2(\T)} \]
 when $\lambda$ is small enough. 
	
	For $df$, we demonstrate a similar behavior. Let $h_m(r)=h_{0m}(r)$ and $f(r, 0, z)= h_{m}(r)\cos(\frac{2\pi m}{\lambda}z)$.  We have
	\begin{align*}
		\|df\|_{L^2(\T)}^2 &= \int_{\partial \T_R} f*df = \pi \lambda\sinh R \cosh R \cdot h_m(R)h_m'(R) \\
		\tn{by } \eqref{h'(r)< h(r)8d^2 logcoshr /sinh^2r} & < (\frac{2\pi m}{\lambda})^2 \cdot  \pi h_m^2(R)\cdot  \lambda  \log \cosh R.	\numberthis \label{dfL2-tube}
		\end{align*} 
	As $R\rightarrow \infty$, $h_m(R) < \infty$ by [\citenum{specialFunctionsgrr}, Theorem 2.1.2].  On the other hand, 
	\begin{align*}
		\|df\|_{\li(\T)} & \geq \max\{|f_rdr|, |f_zdz|\}\\
		&= \max\limits_{r\in [0, R]}\{h_m'(r), \frac{2\pi m}{\lambda}\frac{h_m(r)}{\cosh r} \}\geq  \frac{2\pi m}{\lambda}\frac{h_m(0)}{\cosh 0}=  \frac{2\pi m}{\lambda}.
	\end{align*}  
	This implies that for any fixed $c>0$, for all $\lambda$ small enough, 
	$$	\|df\|_{\li(\T)} \geq \frac{2\pi m}{\lambda} > \frac{2\pi m}{\lambda}\cdot c\cdot \sqrt{\pi}h_m(R) \sqrt{\lambda} \log\cosh R = c\sqrt{-\log\lambda}\|df\|_{L^2}. $$
\end{proof}
This shows that the local form \eqref{local conjecture} of the Brock-Dunfield conjecture \eqref{eq l2 less than root inj thurston} does not hold. 

\bibliographystyle{halpha}
\bibliography{sample}
	
\end{document}